\def\DATE{December 29, 2022}
\newtheorem{theorem}{Theorem}
\newtheorem{corollary}[theorem]{Corollary}
\newtheorem*{theoremA}{Theorem~A}
\newtheorem*{theoremB}{Theorem~B}
\newtheorem*{theoremC}{Theorem~C}
\newtheorem{lemma}[theorem]{Lemma}
\newtheorem{proposition}[theorem]{Proposition}
\theoremstyle{definition}
\newtheorem{example}[theorem]{Example}
\newtheorem{exercise}[theorem]{Exercise}
\newtheorem{remark}[theorem]{Remark}
\newtheorem{definition}[theorem]{Definition}
\def\uB{\overline{\Blob}}
\def\doubless#1#2{{
\def\arraystretch{.5}
\begin{array}{c}
\mbox{\scriptsize $\scriptstyle #1$}
\\
\mbox{\scriptsize $\scriptstyle #2$}
\end{array}\def\arraystretch{1}
}}
\def\oMn{\overline{\oM}}
\def\fopn{\overline{\fop}}
\def\Cha{{\tt Chaos}}
\def\ublob{\overline{\blob}}
\def\dublob{\ublob(\C)}
\def\leaderfill{\leaders\hbox to 1em{\hss.\hss}\hfill}
\def\mod{{\tt Mod}}
\def\MOD{{\tt MOD}}
\def\pinv#1{{p^{-1}(#1)}}
\def\LL{{\EuScript L}}
\def\Krtek#1#2{\fopn\left(  \rule{-.8em}{2.1em} \right.
\raisebox{1.5em}{
\xymatrix@R=1em{#1
\\
#2 \ar@{^{(}->}[u] \rule{0em}{1em} 
}
}
\left. \rule{-.6em}{2.1em}  \right) 
}
\def\Krtekmodularni#1#2{\oMn\left(  \rule{-.8em}{2.1em} \right.
\raisebox{1.5em}{
\xymatrix@R=1em{#1
\\
#2 \ar@{^{(}->}[u] \rule{0em}{1em} 
}
}
\left. \rule{-.6em}{2.1em}  \right) 
}
\def\sfL{{\mathcal L}}
\def\bod{{\raisebox{.15em}{\,$\bigodot$\,}}}
\def\ttQ{{\tt Q}}
\def\CA{{\ttO}_{A}}
\def\ttP{{\int_\ttO \oS}}
\def\oS{{\EuScript S}}
\def\oB{{\overline B}}
\def\lunit{{\EuScript I}_c}
\def\ofop{{\widehat{\fop}_c}}
\def\calB{{\EuScript B}}
\def\Span{{\rm Span}}
\def\colorop #1(#2;#3){{#1}
   \left(\rule{0pt}{15pt}\right.
         \hskip -3mm \begin{array}{c}
	              #3\\#2
                     \end{array}
         \hskip -3mm \left. 
   \rule{0pt}{15pt} \right)
}
\def\fop{{\EuScript {F}}}
\def\fopc{{\EuScript {F\!}_c}}
\def\loc{{\EuScript U}}
\def\C{{\EuScript C}}
\def\uttm{\overline{\tt m}}
\def\Mbar{{\overline\ttM}}
\def\+{\!+\!}
\def\dBlob{{\tt Blob}(\C)}
\def\Blob{{\tt Blob}}
\def\bM{{\tt M}}  
\def\ttF{{\tt F}}    
\def\ttD{{\tt D}}      
\def\ttm{{\tt m}}
\def\blob{{\tt blob}}
\def\dblob{{\tt blob}(\C)}
\def\udblob{{\ublob(\C)}}
\def\Set{{\tt Set}}
\def\Int{\mathring}
\def\Beta#1{{\beta_{#1}(\oP,\oM)}}
\def\BETA#1(#2,#3){{\beta_{#1}\left(#2,#3\right)}}
\def\hMod{{{\tt Mod}_\ttM(\oP)}}
\def\bfOmega{{\boldsymbol \Omega}}
\def\Coll{{\tt Coll_{\ttM}}}
\def\oE{{\EuScript E}}
\def\Free{{\mathbb F}}
\def\hFree{{\mathring{\mathbb F}}}
\def\WBU{{\tt WBU}}
\def\pa{{\partial}}
\def\Vect{\hbox{{$R$-{\tt Mod}}}}
\def\qi{{quasi-isomorphism}}
\def\ttV{{\tt V}}
\def\ttB{{\tt B}}
\def\ttL{{\tt L}}
\def\crada#1#2{#1\cdots#2}       
\def\Rada#1#2#3{#1_{#2},\dots,#1_{#3}}
\def\unit{{\mathbf 1}}
\def\oM{{\EuScript M}}
\def\oMc{{\EuScript M}_c}
\def\oA{{\EuScript A}}
\def\ot{\otimes}
\def\ttM{{\tt M}}
\def\oP{{\EuScript P}}
\def\oM{{\EuScript M}}
\def\Tau{\hbox{${\mathcal T}$\hskip -.3em}}
\def\ttCat{{\tt Cat}}
\def\ttA{{\tt A}}
\def\sfD{{\mathcal D}}
\def\ttC{{\tt C}}
\def\id{\hbox{\rm 1\hskip -1.5mm 1}}
\def\id{{\mathbb 1}}
\def\fib{\triangleright}
\def\vfib{\rotatebox{90}{$\triangleright$}}
\def\Afib{\raisebox{.5em}{\rotatebox{-90}{$\triangleright$}}}
\def\ttO{{\tt O}}
\def\Fib{{\mathscr F}}
\def\fM{{\mathbb M}}
\def\fD{{\mathscr D}}
\def\Gib{{\mathscr G}}
\def\Twr{{\boldsymbol {\EuScript T}\!}}
\def\@evenfoot{\rule{0pt}{20pt}[\DATE] \hfill [{\tt \jobname.tex}]}
\def\@oddfoot{\rule{0pt}{20pt}{[\tt \jobname.tex}]\hfill [\DATE]}
\subjclass[2010]{55U15, 18D50, 81T05}
\providecommand\@dotsep{5}
\def\listtodoname{List of Todos}
\def\listoftodos{\@starttoc{tdo}\listtodoname}
\title[Operads and blobs]
{Operads, operadic categories and the blob complex}
\thanks{Supported by RVO: 67985840 and Praemium Academi\ae.}
\begin{document}
\bibliographystyle{plain}

\parskip3pt plus 1pt minus .5pt
\baselineskip 18pt  plus 1.5pt minus .5pt

\author[M.\ Batanin, M.\ Markl]{Michael Batanin \& Martin Markl}
\address{The Czech Academy of Sciences, Institute of Mathematics, {\v Z}itn{\'a} 25,
         115 67 Prague 1, The Czech Republic}

\begin{abstract}
We will show that the Morrison\textendash{}Walker blob complex appearing
in Topological Quantum
Field Theory is an operadic bar resolution of a certain operad
composed of fields and local relations. 
As a by-product we develop the
theory of unary operadic categories and study some 
novel and interesting phenomena arising in this context.
\end{abstract}

\maketitle

\tableofcontents

\section*{Introduction}

The blob complex was introduced by S.~Morrison and K.~Walker
in~\cite{blob}. It associates  to an
\hbox{$n$-dimensional} manifold $\fM$, equipped with a system of
fields $\C$ containing an ideal of local relations $\loc$, the {\em blob complex 
 $\calB_*(\fM,\C)$}, which is a chain complex whose salient feature
is the isomorphism
\[
H_0\big(\calB_*(\fM,\C)\big) \cong A(\fM) := \C(\fM)/\loc(\fM),
 \]
where $A(\fM)$ is the skein module associated to $\fM$. If the
fields come from an $n$-category $\ttC$ with strong duality,
$A(\fM)$ is the usual topological quantum
field invariant of $\fM$ associated to $\ttC$. The name originated from a
{\em blob\,}, defined as the standard $n$-dimensional ball embedded in $\fM$.

The initial impulse for the present work was a seminar given by K.~Walker
at MSRI, Berkeley, in the winter of 2020. The second author
noticed a striking similarity between the diagrams drawn by Kevin on
the board, and pictures representing elements of 
free operads over graph-related
operadic categories that can be found in~\cite[Section~5]{kodu}. 
This inspired the idea
that the blob complex might be the bar resolution of an operad 
over a suitable operadic category.

That hope indeed turned out to be true; there even exist two related
but non-equivalent ways to interpret the blob complex within operad
theory.  The {\em first\/} interpretation produces a complex quasi-isomorphic
to the Morrison\textendash{}Walker blob complex -- 
the bar construction of a
certain operad of fields over an operadic
category of blob configurations in $\fM$. 
The {\em second\/} interpretation identifies the blob complex with Fresse's bar
construction of a traditional coloured operad, reminiscent of the
little discs operad; the colours
are blobs in $\fM$ with boundaries decorated by fields. Both approaches thus lead 
to several versions of `blob complexes.'
Their relations are summarized in Figure~\ref{Pozitri prof. Vondra.}
at the end of this~paper. 

\vskip .5em
\noindent 
{\bf Disclaimer.} 
The present work does not bring anything new to the theory of blob
complexes per se, neither it adds anything to the
explicit calculations given in~\cite{blob}. 
The free, acyclic resolution of the skein module in 
Theorem~B on page~\pageref{Veta B} might 
however pave way for the study of derived TQFT invariants.  

\vskip .5em
\noindent 
{\bf Novelties.}
The operadic category  of blobs is unary,
meaning that the cardinalities of all its objects are one.
The blob complex thus represents  an interesting,
highly nontrivial example of an unary operadic category 
and justifies careful
analysis of operads,  operadic modules, fibrations and
various versions of the bar
construction in this context. 
Several interesting and new phenomena were discovered 
en~route.    

Conceptual understanding of 
the relationship between  the decorated version of the 
unital operadic category of blobs and the
un-decorated one in Section~\ref{Boli mne prava noha.} inspired the
notion of partial discrete operadic fibrations, 
partial operads and the associated partial Grothendieck
construction, 
given in~Section~\ref{Podari se mi premluvit Jarku abych mohl
  jet do Prahy uz dnes?}. The concrete partial operad that arose in this
context is unital in a weak, unexpected sense,
formalized in Definition~\ref{Uz se tech
  prohlidek bojim.}  by introducing pseudo-units. Pseudo-unitality   
is a new, nontrivial concept even in the realm of traditional algebra, as
Example~\ref{Mam prodlouzene ARC pro Tereje.}~shows. 
We also introduce several versions of
the `standard' 
unitality condition for operads over operadic categories that are not equipped
with the chosen local terminal objects required
in~\cite[Section~1]{duodel}. 

While free modules over classical operads have simple structure,
cf.~e.g.~\cite[Sub\-section~2.10.1]{Fresse}, this is not true in 
the world of operads and their modules over general operadic
categories, where the structure depends on the shape
of the operadic category, as illustrated in Example~\ref{Je to k zblazneni to
  blyskani.}. A structure result can however still 
be obtained
under the condition of rigidity introduced in Definition~\ref{Opet
  budu nytovat podvozek.}, which has no analog in the standard operad
theory.  We believe that all 
the above notions admit generalizations to non-unary operadic categories.

The present paper has two parts. Part~1 develops general theory of unary
operadic categories, operads, modules and resolutions, Part~2 
is devoted to applications to the blob complex.
The main results of the article 
are Theorem~A on page~\pageref{Veta A}, Theorem~B on
page~\pageref{Veta B} and Theorem~C on page~\pageref{Veta C}.
Propositions~\ref{Mam v pokoji 18 stupnu a je
  pulka listopadu.} and~\ref{Jak to vsechno dopadne?} have no counterparts
in traditional algebra.

\vskip .5em
\noindent 
{\bf Requirements and conventions.} 
We will assume working knowledge of operads;
suitable references are the
monograph~\cite{markl-shnider-stasheff:book} and the 
overview~\cite{markl:handbook}. Operadic categories and related
notions were introduced in~\cite{duodel}, but all necessary material
from that paper is recalled in Sections~\ref{Porad se mi bliska.}
and~\ref{Cuka mi v oku.}.  Some preliminary 
knowledge of~\cite{blob} may ease reading Part~2.

Categories will be denoted by typewriter letters such as
$\ttC, \ttO, \ttQ$, \&c,  operads and their modules 
written in script, e.g.~$\oP$, $\oS$, $\oM$, \&c. 
From Section~\ref{Zitra pujdu s Jarkou nakupovat bundicku.} on, all
algebraic objects will live in the monoidal category
$\Vect$ of graded 
modules over a unital commutative associative ring~$R$. 
Chain complexes will be non-negatively graded, with differentials of degree
$-1$. By a \qi\ we mean a morphism of chain
complexes that induces an isomorphism of homology.
Preprints~\cite{env,kodu} are under permanent revision, so we
indicated explicitly which concrete versions we referred to. 

\noindent {\bf Acknowledgment.}  
The first author wishes to express his sincere gratitude to Joachim Kock for
many illuminating discussions on d\'ecollage comonad and operadic
categories from simplicial point of view. The second author is indebted
to Benoit Fresse for pointing to the results of his impressive
book~\cite{Fresse} that were relevant to our work.

\part{Unary operadic categories}

\section{Operadic categories and operads}
\label{Porad se mi bliska.}

Our immediate aim is to rephrase the definitions of
operadic categories and their operads as given
in~\cite[Section~1]{duodel} to the particular, unary case when the
cardinality functor is constant and equals $1$. We believe that this
would make this article independent of~\cite{duodel}.  

Unary operadic categories will appear in Definition~\ref{Svedeni snad prestava.} as categories equipped with fiber functors;
Propositions~\ref{Musim se objednat.} and~\ref{Musim se objednat asi
  jeste dnes.} then describe them also as algebras for a certain
monad. Operads over unary operadic categories are introduced in
Definition~\ref{Je to tady poustevnictvi.}. In this section we
however, unlike in~\cite{duodel},
do not require the existence of chosen local terminal objects in operadic
categories, neither we assume units of operads. A refined analysis of
these additional structures is given in~Section~\ref{Budu letat
  jeste pristi rok?}.   

\begin{remark} 
The approach to unary operadic categories
presented in this section has a lot of overlaps with the
material from~\cite{GarnerKockWeber} concerning the use of the
d\'ecollage comonad $\sfD$ recalled in~\eqref{Jesu salvator saeculi}. But in contrast with
\cite{GarnerKockWeber}, where $\sfD$-coalgebra structure is fixed
and the fiber-functor structure is imposed on top of it, we begin
with imposing a fiber-structure functor and adding some pieces of
\hbox{$\sfD$-coalgebra} structure only when it is necessary. This allows us to
analyze subtler unitality properties of operadic
categories.
\end{remark}

\begin{lemma}
\label{Mozna mam jen otlacenou ruku od operadla.}
Each family $\big\{\Fib_S : \ttO/S \to \ttO \ | \ S \in \ttO\big\}$ of
functors indexed by objects of\/ $\ttO$ canonically induces a~family 
$\big\{\Fib_c : \ttO/S \to \ttO/\Fib_T(c) \ | \  c :S \to T\big\}$ of functors  indexed
by arrows of\/ $\ttO$.
\end{lemma}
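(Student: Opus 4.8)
The plan is to extract the functor $\Fib_c$ from the given functor $\Fib_T$ by a simple comma-category construction, using the fact that a functor $\Fib_T\colon \ttO/T \to \ttO$ combined with a chosen object of $\ttO/T$ produces a canonical object of $\ttO$, and hence a slice over it. Concretely, fix an arrow $c\colon S\to T$ in $\ttO$. This arrow is itself an object of the slice $\ttO/T$, so $\Fib_T(c)$ is a well-defined object of $\ttO$ and the target slice $\ttO/\Fib_T(c)$ makes sense. An object of the source $\ttO/S$ is an arrow $a\colon X\to S$; composing with $c$ gives $ca\colon X\to T$, an object of $\ttO/T$, and applying $\Fib_T$ sends it to an object $\Fib_T(ca)$ of $\ttO$. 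The key point is that the factorization $ca = c\circ a$ exhibits $a$ as a morphism in $\ttO/T$ from $(ca\colon X\to T)$ to $(c\colon S\to T)$; applying the functor $\Fib_T$ to this morphism yields an arrow $\Fib_T(a)\colon \Fib_T(ca)\to \Fib_T(c)$ in $\ttO$, i.e.\ an object of $\ttO/\Fib_T(c)$. I would define
\[
  \Fib_c(a\colon X\to S) \ :=\ \big(\Fib_T(a)\colon \Fib_T(ca)\to \Fib_T(c)\big)\ \in\ \ttO/\Fib_T(c).
\]

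On morphisms I would argue similarly. A morphism in $\ttO/S$ from $a\colon X\to S$ to $a'\colon X'\to S$ is an arrow $f\colon X\to X'$ with $a'f = a$. Then $c a' f = ca$, so $f$ is equally a morphism in $\ttO/T$ from $ca$ to $ca'$, and $\Fib_T$ sends it to $\Fib_T(f)\colon \Fib_T(ca)\to \Fib_T(ca')$. I must check this is a morphism in $\ttO/\Fib_T(c)$, i.e.\ that $\Fib_T(a')\circ \Fib_T(f) = \Fib_T(a)$; but this is just the functoriality of $\Fib_T$ applied to the identity $a'\circ f = a$ of morphisms in $\ttO/T$. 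So $\Fib_c(f) := \Fib_T(f)$, and functoriality of $\Fib_c$ (preservation of identities and composites) is immediate from that of $\Fib_T$, since in both cases the underlying arrows in $\ttO$ are literally the same.

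Finally I would record the naturality/canonicity claim of the statement: the assignment $c\mapsto \Fib_c$ is determined with no further choices, and when $c = \id_S$ is the identity we recover $\Fib_{\id_S} = \Fib_S$ up to the canonical identification $\ttO/\id_S \cong \ttO/S$ and $\Fib_S(\id_S)$ being the object over which we slice (one may note $\ttO/\Fib_S(\id_S)$ is not literally $\ttO$, so the compatibility is only up to the evident comparison functor — this is worth a sentence so the reader is not misled). There is essentially no obstacle here: the only thing to be slightly careful about is bookkeeping of which composite plays the role of "object of $\ttO/T$" versus "morphism of $\ttO/T$", and the observation that the \emph{same} arrow $a$ of $\ttO$ wears both hats. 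Everything else is a formal consequence of the functoriality of the given family $\{\Fib_S\}$.
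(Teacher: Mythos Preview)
Your proposal is correct and follows essentially the same approach as the paper: both define $\Fib_c$ on an object $a\colon X\to S$ by viewing $a$ as a morphism $ca\to c$ in $\ttO/T$ and applying $\Fib_T$, and both define $\Fib_c$ on a morphism $f$ in $\ttO/S$ as $\Fib_T(f)$, checking the over-compatibility via functoriality of $\Fib_T$. Your closing remark about $c=\id_S$ is a harmless addendum not present in the paper, but note that the lemma itself makes no such compatibility claim, so you need not include it.
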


\begin{proof}
Assume that $c: S \to T$ is a morphism of $\ttO$. The functor $\Fib_c$
acts, by definition, on an object $f: X \to S \in \ttO/S$ as
follows. Embed $f$ in the diagram
\[
\xymatrix{&X \ar[ld]_h \ar[d]^f
\\
T&S\ar[l]_c
}
\]
in which $h := cf$.
Interpreting the arrows $h$ and $c$ as objects of $\ttO/T$, $f$ appears as
a morphism $f:h \to c$ in $\ttO/T$. We then define 
\[
\Fib_c(f) := \Fib_T(f) : \Fib_T(h) \longrightarrow \Fib_T(c) \in \ttO/\Fib_T(c).
\]
To describe the action of $\Fib_c$ on a morphism ${{\Phi}}:
b \to a$ in $\ttO/S$ given by the commutative diagram
\[
\xymatrix@C=1em{Y\ar[rr]^\phi\ar[dr]_b&&X \ar[dl]^a
\\
&\, S,&
}
\]
embed that diagram into
\begin{equation}
\label{Lakuji si blokady.}
\xymatrix@C=5em@R=4em{Y\ar[d]_h  \ar[r]^\phi \ar[rd]^(.3)b&X \ar[d]^a
  \ar[dl]_(.3)l|!{[d];[l]}\hole
\\
T& \ar[l]^cS
}
\end{equation}
in which $h := bc$ and $l:= ca$. Then $\Fib_c({\Phi}) :
\Fib_c(b) \to \Fib_c(a)$ is given by the commutative diagram
\begin{equation}
\label{Pojedu na Zimni skolu?}
\xymatrix@R=3.5em@C=.8em{\Fib_T(h)\ar[rr]^{\Fib_T(\phi)}\ar[dr]_{\Fib_T(b)}&&
\Fib_T(l) \ar[dl]^{\Fib_T(a)}
\\
&\ \Fib_T(c).&
}
\end{equation}
The verification that the above rules define a functor is straightforward.
\end{proof}

\begin{definition}
\label{Zacina mne zase svedit i leva ruka?}
A family $\Fib_S$ of functors as in Lemma~\ref{Mozna mam jen otlacenou ruku od
  operadla.} is called a family of {\em fiber functors\/} if, for each
arrow $c:S \to T$ in $\ttO$, the diagram of functors
\begin{equation}
\label{Prodlouzi mi papiry?}
\xymatrix@C=1em{\ttO/S \ar[rr]^(.4){\Fib_c} \ar[rd]_{\Fib_S} &&
  \ttO/\Fib_T(c)
\ar[ld]^{\Fib_{\Fib_T(c)}}
\\
&\ttO&
}
\end{equation}
commutes.
\end{definition}

To expand Definition~\ref{Zacina mne zase svedit i leva ruka?},
introduce the following notation and terminology. Given a map \hbox{$f:X \to
S$}, we call $\Fib_S(f)$ the {\em fiber\/} of $f$ and denote it simply
by $\Fib(f)$. The fact that $F \in \ttO$ is the fiber of $f$ 
will also be expressed by
writing $F \fib X \stackrel f\to S$. For a morphism $F$ in $\ttO/S$
given by the diagram
\[
\xymatrix@C=1em{X'\ar[dr]_{f'}\ar[rr]^{\phi}& &X'' \ar[dl]^{f''}
\\
&S&
}
\]
denote by $\phi_S$ the induced map $\Fib_S(F) : \Fib(f') \to
\Fib(f'')$ between fibers.
Then the commutativity of~(\ref{Prodlouzi mi papiry?}) on objects is
expressed by the equality of the fibers in the diagram
\begin{equation}
\label{Po navratu ze Zimni skoly.}
\xymatrix@R=0.1em@C=.8em{\hskip -1.8em F\ \fib F'
  \ar[rr]^{\phi_S}   
&&F'' \hskip -.5em
\\
\hskip -2em\raisebox{.7em}{\rotatebox{270}{$=$}}\hskip 1.65em\raisebox{.7em}{{\rotatebox{270}{$\fib$}}} && 
\hskip -.5em\raisebox{.7em}{{\rotatebox{270}{$\fib$}}} \hskip -.5em
\\
\hskip -1.8em F\ \fib X' \ar[rr]^\phi  \ar@/_.9em/[ddddddr]_{f'}   &&
X'' \ar@/^.9em/[ddddddl]^{f''} \hskip -.5em
\\&& 
\\&& 
\\&& 
\\&&
\\ &&
\\
&\ S .&
}
\end{equation}
In words, the fiber of a map equals the fiber of the induced map
between its fibers.

To expand the commutativity of~(\ref{Prodlouzi mi papiry?}) on
morphisms, notice that in the above notation, diagram~(\ref{Pojedu na
  Zimni skolu?}) reads
\[
\xymatrix@R=2.5em@C=.8em{
\Fib(h)\ar[rr]^{\phi_T}\ar[dr]_{b_T}&&
\Fib(l) \ar[dl]^{a_T}
\\
&\ \Fib(c).&
}
\]
In the situation of~(\ref{Lakuji si blokady.}),
diagram~(\ref{Prodlouzi mi papiry?}) requires that
\[
(\phi_T)_{\Fib(c)} = \phi_S.
\]

\begin{definition}
\label{Svedeni snad prestava.}
A strict {\em unary (nonunital) operadic category\/} is a category
$\ttO$ equipped with a family of fiber functors as per
Definition~\ref{Zacina mne zase svedit i leva ruka?}.  A strict
{\em operadic functor\/} $\Phi : \ttO' \to \ttO''$ between strict unary
operadic categories is a functor that commutes with the associated
fiber functors.
\end{definition}

Since all operadic categories and operadic functors in this article
will be {\em strict}, we will for brevity omit this adjective. If not
indicated otherwise, by an operadic category we will always mean
unary and nonunital one.

\begin{remark}
\label{Dnes mi vypalovali bradavici.}
Recall that a simplicial set is a~collection $S_\bullet = \{S_n\}_{n
\geq 0}$ of sets together with maps
\begin{align*}
d_i &: S_n \to S_{n-1},\ 0 \leq i \leq n,\ n \geq 1, \ \mbox { and}
\\
s_i&: S_n \to
S_{n+1},\ 0 \leq i \leq n,\ n \geq 0, 
\end{align*}
that satisfy the identities:
\begin{align*}
\nonumber 
d_id_j &= d_{j-1}d_i \mbox { if $i < j$},
\\
\nonumber 
s_is_j &= s_{j+1}s_i \mbox { if $i \leq j$},
\\
\label{1.1.2012}
d_i s_j &= s_{j-1}d_i \mbox { if $i < j$},
\\
\nonumber 
d_is_j &= \mbox { identity } =  d_{j+1}s_i \mbox {, \ and}
\\
\nonumber 
d_i s_j &= s_{j}d_{i-1} \mbox { if $i >  j+1$.}
\end{align*}
An important example is the {\em simplicial nerve\/} $N_\bullet(\ttO)
= \{\ttO_n\}_{n \geq 0}$ 
of the category $\ttO$, which is a simplicial complex of the form 
\[
\xymatrix@C=7em{\ttO_0 \ar[r]|-{s_0}     
&  \ar@/^1em/[l]|-{d_0}  \ar@/_1em/[l]|-{d_1}  \ttO_1
\ar@/_1em/[r]|-{s_0}
\ar@/^1em/[r]|-{s_1}
& \ttO_2   \ar@/^2em/[l]|-{d_0}  \ar[l]|-{d_1}  \ar@/_2em/[l]|-{d_2} 
&\hskip -10em\cdots \ \ .
}
\]
In the above display, $\ttO_n$ consists of chains 
\begin{equation}
\label{Vcera jsme s Jarkou vypousteli vodu na chalupe.}
T_0 \stackrel
{f_0}\longrightarrow T_1 \stackrel {f_1}\longrightarrow \cdots T_{i-1} \stackrel
{f_{i-1}}\longrightarrow T_i \stackrel {f_i}\longrightarrow T_{i+1}
\stackrel {f_{n-1}}\longrightarrow T_n
\end{equation} 
of arrows of $\tt0$. The operator $d_0$ acts on~(\ref{Vcera jsme s
  Jarkou vypousteli vodu na chalupe.}) by removing $T_0$, $d_n$
removes $T_n$. The operator $d_i$ with $0 < i < n$ replaces the part
$T_{i-1} \stackrel
{f_{i-1}}\longrightarrow T_i \stackrel {f_i}\longrightarrow 
T_{i+1}$ of~(\ref{Vcera
  jsme s Jarkou vypousteli vodu na chalupe.}) by the single arrow $f_i
f_{i+1} : T_{i-1} \to T_{i+1}$. The operator $s_i$ replaces $T_i$
in~(\ref{Vcera jsme s Jarkou vypousteli vodu na chalupe.}) by the
identity automorphism $\id :T_i \to T_i$.

The extra structure on $\ttO$ given by Lemma~\ref{Mozna mam jen
otlacenou ruku od operadla.} is equivalent to adding to 
the nerve of $\ttO$ a sequence of additional face operators 
\[
d_{n+1}:\ttO_n\to \ttO_{n-1}, \ n \geq 1,
\]
as in
\[
\xymatrix@C=7em{\ttO_0 \ar[r]|-{s_0}     
&  \ar@/^1em/[l]|-{d_0}  \ar@/_1em/[l]|-{d_1}
\ar@/_2em/[l]|-{d_2}  \ttO_1
\ar@/_1em/[r]|-{s_0}
\ar@/^1em/[r]|-{s_1}
& \ttO_2   \ar@/^2em/[l]|-{d_0}  \ar[l]|-{d_1}  \ar@/_2em/[l]|-{d_2} 
\ar@/_3em/[l]|-{d_3} 
&\hskip -10em\cdots \ \ ,
}
\]
which satisfies all standard simplicial identities except for the top
one, that is, the composite $d_nd_{n+1}:\ttO_n\to \ttO_{n-2}$ is not
necessary equal to $d_nd_{n}:\ttO_n\to \ttO_{n-2}.$ The condition of
Definition \ref{Zacina mne zase svedit i leva ruka?}  restores this
relation.
\end{remark}

Let us denote by $\sfD(\ttA)$ the coproduct
\begin{equation}
\label{Jesu salvator saeculi}
\sfD(\ttA) := \coprod_{c \in A} \ttA/c
\end{equation} 
of the slice categories
over the objects of a small category $\ttA$. It is simple
to verify that $\sfD(\ttA)$ is an unary operadic category,
with the fiber functor that assigns to each morphism $\varphi : f' \to
f''$ in $\sfD(\ttA)$ given by the diagram
\[
\xymatrix@C=.5em@R=1.5em{X'\ar[rr]^\varphi \ar[dr]_{f'} &&X''\ar[dl]^{f''}
\\
&c&
}
\]
the object $\varphi \in \ttA/X'' \subset \sfD(\ttA)$.

The functor $\sfD:\ttCat \to \ttCat$ has a natural structure of a
comonad in the category of small categories called {\em d\'ecollage
  comonad}, cf.~\cite{GarnerKockWeber}. We briefly describe its
structure morphisms.
Its~counit 
\begin{equation}
\label{Pisi den pred Stedrym dnem.}
\epsilon:\sfD(\ttA)\to \ttA
\end{equation} 
sends an object $X\to c \in \sfD(\ttA)$ to $X$.  

To understand the
comultiplication $\delta: \sfD(\ttA) \to \sfD^2(\ttA)$ we observe that
the objects of $\sfD^2(\ttA)$ are commutative triangles
 \begin{equation}
\label{Jaruska pece cukrovi.} 
    \xymatrix@C = +1em@R = +1em{
      X      \ar[rr]^f \ar[dr]_h & & Y \ar[dl]^g
      \\
      &c&
    }
\end{equation} 
and the set of morphisms between such triangles can be nonempty only
if the right arrow \hbox{$g:Y\to c$} is the same in both triangles. In this
case a morphism from
\begin{equation*} 
    \xymatrix@C = +1em@R = +1em{
      X'      \ar[rr]^{f'} \ar[dr]_{h'} & & Y \ar[dl]^g
      \\
      &c&
    }
\end{equation*} 
to~(\ref{Jaruska pece cukrovi.}) is given by a morphism $\phi: X'\to X$ in $\ttA$
which makes the tetrahedron 
\[
\xymatrix{
&X\ar[rd]^f   \ar[dd]^(.3)h|-\hole      &
\\
X'  \ar[rr]^(.34){f'}    \ar[ru]^\phi\ar[rd]^{h'}  &&Y \ar[ld]_g
\\
&c&
}
\]
commutative. The comultiplication $\delta$ assigns to an object
$f:X\to c$  the commutative triangle
\[
    \xymatrix@C = +1em@R = +1em{
      X      \ar[rr]^f \ar[dr]_f & & c \ar[dl]^1
      \\
      &c&
}
\]
It follows from the following elementary 
categorical  fact that
the functor $\sfD:\ttCat \to \ttCat$
is also a~nonunital monad.

\begin{lemma}
  \label{Za chvili s Jarkou na obed.}
Let $M : \ttC \to \ttC$ be an  endofunctor on a category $\ttC$ and $d :
M \rightsquigarrow \id_\ttC$ a natural transformation of $M$ to the
identity functor on \/ $\ttC$. Then the pair $(M,\mu)$ with $\mu
:= Md$ is a
nonunital monad over \/ $\ttC$, and $d_A :MA \to A$ is an $M$-algebra for
each $A \in \ttC$.
\end{lemma}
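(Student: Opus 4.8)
The plan is to verify the (nonunital) monad axioms directly from the definition of $\mu := Md$, relying throughout only on the naturality of $d$. Recall that a nonunital monad structure on $M$ amounts to a natural transformation $\mu : MM \Rightarrow M$ which is associative, i.e.\ $\mu \circ M\mu = \mu \circ \mu M$ as natural transformations $MMM \Rightarrow M$; there is no unit to check since we ask only for a nonunital monad. So the proof has exactly two parts: checking that $\mu = Md$ is natural, and checking associativity. The former is automatic: $Md$ is the whiskering of the natural transformation $d$ by the functor $M$ on the left, hence natural. The latter is the only computation.

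First I would write out the two composites on an object $A \in \ttC$. We have $\mu_A = M(d_A) : MMA \to MA$. The component of $\mu \circ M\mu$ at $A$ is $M(d_A) \circ M(\mu_A) = M(d_A) \circ M(M(d_A)) = M\big(d_A \circ M(d_A)\big)$, using functoriality of $M$. The component of $\mu \circ \mu M$ at $A$ is $M(d_A) \circ \mu_{MA} = M(d_A) \circ M(d_{MA})= M\big(d_A \circ d_{MA}\big)$. Thus associativity reduces, after applying $M$, to the identity $d_A \circ M(d_A) = d_A \circ d_{MA} : MMA \to A$. But this is precisely the naturality square of $d : M \Rightarrow \id_\ttC$ applied to the morphism $d_A : MA \to A$: naturality gives $d_A \circ M(d_A) = \id_\ttC(d_A) \circ d_{MA} = d_A \circ d_{MA}$. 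Applying the functor $M$ to both sides yields the associativity of $\mu$.

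For the second assertion, that $d_A : MA \to A$ is an $M$-algebra, I would just recall that an algebra over the nonunital monad $(M,\mu)$ is an object $B$ together with a map $\beta : MB \to B$ satisfying the single compatibility $\beta \circ M\beta = \beta \circ \mu_B$. Taking $B = A$ and $\beta = d_A$, the left side is $d_A \circ M(d_A)$ and the right side is $d_A \circ M(d_A)$ since $\mu_A = M(d_A)$; these agree trivially. (If one prefers the a priori weaker-looking axiom $\beta \circ M\beta = \beta \circ \mu_B$ with $\mu_B$ unexpanded, it is the same identity $d_A\circ M(d_A)=d_A\circ M(d_A)$.) So the algebra condition holds for formal reasons, once naturality of $d$ has been used to establish associativity of $\mu$ in the first place.

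There is essentially no obstacle here; the only thing worth pausing on is the bookkeeping of which whiskering is which, i.e.\ keeping straight that $\mu M$ has component $\mu_{MA} = M(d_{MA})$ while $M\mu$ has component $M(\mu_A) = M(M(d_A))$ — this is where a sign-error-analogue could creep in — and then recognizing the resulting equation as exactly the naturality square of $d$ at the morphism $d_A$. I would present the argument in that order: state that $Md$ is natural by whiskering, reduce associativity to $d_A \circ M(d_A) = d_A \circ d_{MA}$, identify this with naturality of $d$ at $d_A$, and finally observe the $M$-algebra axiom for $d_A$ is the same identity.
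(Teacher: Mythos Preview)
Your proof is correct and follows essentially the same route as the paper: the paper draws the naturality square of $d$ (with sides $Md$, $dM$, $d$, $d$), observes it commutes by naturality, then applies $M$ to obtain the associativity square for $\mu = Md$; you do exactly this but spell it out componentwise at an object $A$, reducing to $d_A \circ M(d_A) = d_A \circ d_{MA}$. The algebra claim is dismissed as ``clear'' in the paper, and your observation that $\beta \circ M\beta = \beta \circ \mu_A$ becomes a tautology when $\beta = d_A$ and $\mu_A = M(d_A)$ is the obvious reason why.
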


\begin{proof}
The diagram of natural transformations 
\begin{equation}
\label{Vanoce}
\xymatrix@C=3em{M^2 \ar@{~>}[d]_{dM}  \ar@{~>}[r]^{Md} & M\ar@{~>}[d]^d
\\
M\ar@{~>}[r]^d & \id_\ttC
}
\end{equation}
commutes by the naturality of $d$.  Applying $M$ on that diagram and
substituting $\mu$ for $Md$ gives the commutative diagram
\[
\xymatrix@C=3em{M^3 \ar@{~>}[d]_{\mu_M}  \ar@{~>}[r]^{M\mu} & M^2\ar@{~>}[d]^{\mu}
\\
M^2\ar@{~>}[r]^{\mu} &\ M
}
\]
meaning that $M$ is a nonunital monad. The second part of the lemma is clear.
\end{proof}

\begin{corollary}
\label{Nesmim zapomenout prinest Jarce tac.}
The functor $\sfD:\ttCat\to \ttCat$ has a natural nonunital monad structure.
\end{corollary}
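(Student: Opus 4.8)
The statement to prove is Corollary~\ref{Nesmim zapomenout prinest Jarce tac.}: that $\sfD:\ttCat\to\ttCat$ carries a natural nonunital monad structure. The plan is to derive this immediately from the preceding Lemma~\ref{Za chvili s Jarkou na obed.}, applied with $\ttC:=\ttCat$, $M:=\sfD$, and $d:=\delta$, the comultiplication of the d\'ecollage comonad described just above the lemma. Lemma~\ref{Za chvili s Jarkou na obed.} takes as input any endofunctor $M$ together with a natural transformation $d:M\rightsquigarrow\id$ and produces a nonunital monad $(M,Md)$. So the only thing to check is that $\delta$ is of this form, i.e.\ that the comultiplication $\delta:\sfD\to\sfD^2$ of the comonad, when post-composed with the natural transformation $\sfD^2\to\sfD$ that \emph{forgets the inner slice datum}, is a natural transformation $\sfD\rightsquigarrow\id_\ttCat$ — or, more precisely, that there is some natural $d:\sfD\rightsquigarrow\id_\ttCat$ to feed into the lemma.

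The natural transformation I would use is the counit $\epsilon:\sfD(\ttA)\to\ttA$ from~\eqref{Pisi den pred Stedrym dnem.}, which sends $(X\to c)\mapsto X$; this is manifestly natural in $\ttA$. Then the lemma, applied to $M=\sfD$ and $d=\epsilon$, yields a nonunital monad with multiplication $\mu:=\sfD\epsilon:\sfD^2\to\sfD$, together with the fact that each $\epsilon_\ttA:\sfD(\ttA)\to\ttA$ is a $\sfD$-algebra. Naturality of the whole package in $\ttCat$ is automatic since $\sfD$ and $\epsilon$ are functorial/natural constructions. The one verification worth making explicit is that the resulting multiplication $\sfD\epsilon$ coincides with the one obtained from the comonad comultiplication after forgetting structure, so that the monad and comonad structures on $\sfD$ are genuinely compatible in the way later sections will want; this amounts to unwinding~\eqref{Jaruska pece cukrovi.} and checking that $\sfD\epsilon$ sends a triangle $(X\xrightarrow{f}Y\xrightarrow{g}c)\in\sfD^2(\ttA)$ to $(X\xrightarrow{gf}c)$, which is a one-line diagram chase.

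The main (and only real) obstacle is bookkeeping: one must be careful that $\sfD$ is indeed a genuine endofunctor of $\ttCat$ — i.e.\ that on a functor $F:\ttA\to\ttB$ the induced map $\sfD(F)$ on the coproduct of slices is well defined and strictly functorial — and that $\epsilon$ is strictly natural with respect to this. Both are standard but should be at least gestured at, since the later development leans on $\sfD$ being a strict (nonunital) monad, not merely a lax or pseudo one. Given Lemma~\ref{Za chvili s Jarkou na obed.}, no further associativity pentagon needs to be checked by hand: the lemma has already discharged it via diagram~\eqref{Vanoce}. Thus the proof reduces to: (i) recall $\sfD$ is an endofunctor of $\ttCat$; (ii) recall $\epsilon:\sfD\rightsquigarrow\id_\ttCat$ is natural; (iii) invoke Lemma~\ref{Za chvili s Jarkou na obed.}.

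\begin{proof}
The construction $\ttA\mapsto\sfD(\ttA)=\coprod_{c\in\ttA}\ttA/c$ is an endofunctor of $\ttCat$: a functor $F:\ttA\to\ttB$ sends an object $(X\xrightarrow{f}c)$ of $\sfD(\ttA)$ to $(FX\xrightarrow{Ff}Fc)\in\ttB/Fc\subset\sfD(\ttB)$, and a morphism $\varphi:f'\to f''$ over $c$ to $F\varphi:Ff'\to Ff''$ over $Fc$; this is strictly functorial in $F$. The counit $\epsilon_\ttA:\sfD(\ttA)\to\ttA$ of~\eqref{Pisi den pred Stedrym dnem.}, $(X\to c)\mapsto X$, is strictly natural in $\ttA$, so we have a natural transformation $\epsilon:\sfD\rightsquigarrow\id_\ttCat$. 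Lemma~\ref{Za chvili s Jarkou na obed.}, applied to the endofunctor $M:=\sfD$ on $\ttC:=\ttCat$ and the natural transformation $d:=\epsilon$, shows that $(\sfD,\mu)$ with $\mu:=\sfD\epsilon$ is a nonunital monad, and that each $\epsilon_\ttA$ exhibits $\sfD(\ttA)$ as an $\sfD$-algebra. Unwinding the definitions, $\mu_\ttA=\sfD(\epsilon_\ttA):\sfD^2(\ttA)\to\sfD(\ttA)$ sends the triangle~\eqref{Jaruska pece cukrovi.}, viewed as an object of $\sfD^2(\ttA)$, to the object $(X\xrightarrow{gf}c)$ of $\sfD(\ttA)$, i.e.\ it composes the two legs; this is precisely the operation obtained from the comonad comultiplication $\delta:\sfD\to\sfD^2$ by forgetting the inner slice datum, so the monad structure is compatible with the comonad structure on $\sfD$. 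Naturality of $\mu$ in $\ttCat$ follows from that of $\sfD$ and $\epsilon$.
\end{proof}
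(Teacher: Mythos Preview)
Your core argument is correct and is exactly the paper's approach: apply Lemma~\ref{Za chvili s Jarkou na obed.} with $M:=\sfD$ and $d:=\epsilon$, the counit of~\eqref{Pisi den pred Stedrym dnem.}. (Your plan's opening line mentions $d:=\delta$, which is a slip---$\delta$ goes the wrong way---but you immediately correct to $\epsilon$, and the actual proof uses $\epsilon$ throughout.)

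There is, however, a computational error in your final ``unwinding'' remark, which is extraneous to the corollary but wrong as stated. You claim that $\mu_\ttA=\sfD(\epsilon_\ttA)$ sends the triangle~\eqref{Jaruska pece cukrovi.} to $(X\xrightarrow{gf}c)$. That is what $\epsilon_{\sfD(\ttA)}=(\epsilon M)_\ttA$ does. The map $\sfD(\epsilon_\ttA)=(M\epsilon)_\ttA$ instead sends the triangle, viewed as the object $\big((X\xrightarrow{gf}c)\xrightarrow{f}(Y\xrightarrow{g}c)\big)$ of $\sfD(\ttA)/(Y\xrightarrow{g}c)$, to $(\epsilon_\ttA(X\to c)\xrightarrow{\epsilon_\ttA(f)}\epsilon_\ttA(Y\to c))=(X\xrightarrow{f}Y)\in\ttA/Y\subset\sfD(\ttA)$. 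So the multiplication forgets the outer leg $g$, not the inner one; it does not ``compose the two legs.'' Since this side remark is not needed for the corollary, your proof stands, but you should either delete the unwinding or fix it.
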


\begin{proof} 
The assumptions of Lemma~\ref{Za
  chvili s Jarkou na obed.} are fulfilled for  $M := {\sfD}$ and $d$ the
counit $\epsilon$ in~\eqref{Pisi den pred Stedrym dnem.}.
\end{proof} 

\begin{proposition}
\label{Musim se objednat.}
Unary (non-unital) operadic categories with small sets of objects
are algebras for the non-unital
monad~$\,\sfD$ in ${\tt Cat}$.
\end{proposition}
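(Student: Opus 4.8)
The plan is to exhibit an equivalence between the category of unary (nonunital) operadic categories with small object sets and the category of $\sfD$-algebras, where $\sfD$ is the nonunital monad of Corollary~\ref{Nesmim zapomenout prinest Jarce tac.}. First I would unwind what a $\sfD$-algebra structure on a small category $\ttO$ consists of: it is a functor $\alpha : \sfD(\ttO) = \coprod_{S \in \ttO} \ttO/S \to \ttO$ satisfying the associativity square for $\mu = \sfD\epsilon$. Since $\sfD(\ttO)$ is a coproduct of slices, such a functor $\alpha$ is precisely a family of functors $\Fib_S : \ttO/S \to \ttO$ indexed by objects $S \in \ttO$ — exactly the data of Lemma~\ref{Mozna mam jen otlacenou ruku od operadla.}. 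So the underlying-data match is immediate, and the content of the proposition is that the monad-associativity axiom for $\alpha$ is equivalent to the fiber-functor condition of Definition~\ref{Zacina mne zase svedit i leva ruka?}, i.e.\ the commutativity of~\eqref{Prodlouzi mi papiry?}.

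Next I would translate the associativity axiom $\alpha \circ \sfD\alpha = \alpha \circ \mu_\ttO$ into concrete terms, chasing an arbitrary object of $\sfD^2(\ttO)$. By the description of $\sfD^2$ recalled before the statement, an object of $\sfD^2(\ttO)$ is a commutative triangle $X \xrightarrow{f} Y \xrightarrow{g} S$ in $\ttO$ (with composite $h = gf$); here I would use that the slice $\ttO/S$ is an object of $\ttCat$ over which we take a further slice, so $\sfD(\ttO/g)$ records pairs with fixed $g : Y \to S$. The comultiplication $\mu_\ttO = \sfD\epsilon$ sends such a triangle along the counit, and $\sfD\alpha$ applies $\Fib_Y$ (here I need the object-level identification $\Fib_c(f) = \Fib_T(f)$ from the proof of Lemma~\ref{Mozna mam jen otlacenou ruku od operadla.}, relating the arrow-indexed and object-indexed families). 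Carrying this through, $\alpha \circ \mu_\ttO$ on the triangle yields $\Fib_S(h) = \Fib(gf)$, while $\alpha \circ \sfD\alpha$ yields $\Fib_{\Fib_S(g)}(\Fib_g(f))$, and the monad law says these agree — which is exactly the statement that the fiber of $f$ equals the fiber of the induced map between fibers, i.e.\ the commutativity of~\eqref{Prodlouzi mi papiry?} on objects, displayed as~\eqref{Po navratu ze Zimni skoly.}. The analogous chase on morphisms of $\sfD^2(\ttO)$ — which are given by a single $\phi$ making the tetrahedron commute — reproduces the morphism-level condition $(\phi_T)_{\Fib(c)} = \phi_S$ spelled out after~\eqref{Lakuji si blokady.}. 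I would also note that the functoriality of $\alpha$ itself is the ``rules define a functor'' claim at the end of the proof of Lemma~\ref{Mozna mam jen otlacenou ruku od operadla.}, so no further condition is needed.

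Finally I would check that this correspondence is natural, i.e.\ that morphisms of $\sfD$-algebras (functors commuting with the structure maps $\alpha$) are exactly the strict operadic functors of Definition~\ref{Svedeni snad prestava.}, which is again a direct unwinding once the object-level matching of structure is in place; since $\sfD$ is only a nonunital monad there are no unit conditions to reconcile, matching the fact that our operadic categories are nonunital. The main obstacle is purely bookkeeping: carefully matching the two indexing conventions for fibers (the object-indexed $\Fib_S$ versus the arrow-indexed $\Fib_c$) through the identification in Lemma~\ref{Mozna mam jen otlacenou ruku od operadla.}, and keeping straight which slice-of-a-slice corresponds to which leg of the triangle when computing $\sfD^2(\ttO)$; once the dictionary is fixed, the monad associativity square and diagram~\eqref{Prodlouzi mi papiry?} are literally the same equation.
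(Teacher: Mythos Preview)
Your approach is exactly the paper's (which is terse: ``it is easy to see''), but your object-level chase contains a slip. You claim $\alpha \circ \mu_\ttO$ on the triangle $X \xrightarrow{f} Y \xrightarrow{g} S$ yields $\Fib_S(gf)$. It does not. Since $\mu_\ttO = \sfD\epsilon_\ttO$ and $\epsilon_\ttO$ sends an arrow to its \emph{domain}, the functor $\sfD\epsilon_\ttO$ takes the object $(f: h \to g)$ of $\sfD(\ttO)/g$ to the object $f: X \to Y$ of $\ttO/\epsilon_\ttO(g) = \ttO/Y$; it forgets the bottom vertex $S$, not the middle vertex $Y$. Hence $\alpha \circ \mu_\ttO$ gives $\Fib_Y(f)$, and the algebra axiom reads
\[
\Fib_Y(f) \;=\; \Fib_{\Fib_S(g)}\bigl(\Fib_g(f)\bigr),
\]
which is exactly diagram~\eqref{Prodlouzi mi papiry?} with $c=g$, $S\leftrightarrow Y$, $T\leftrightarrow S$. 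The equation you wrote, $\Fib_S(gf) = \Fib_{\Fib_S(g)}(\Fib_g(f))$, is a genuinely different statement (it would force the fiber of a composite to equal the fiber of the induced map between fibers, rather than the fiber of $f$ itself) and is not the fiber-functor condition. It looks like you tacitly used $\epsilon_{\sfD(\ttO)}$ in place of $\sfD\epsilon_\ttO$; both are natural transformations $\sfD^2 \Rightarrow \sfD$, but only the latter is the monad multiplication of Lemma~\ref{Za chvili s Jarkou na obed.}. Your parenthetical ``$\sfD\alpha$ applies $\Fib_Y$'' is likewise off: $\sfD\alpha$ applies $\alpha$ componentwise over $\sfD(\ttO)$, so on $\sfD(\ttO)/g$ with $g: Y \to S$ it applies $\Fib_S$ (as a functor on $\ttO/S$), landing in $\ttO/\Fib_S(g)$. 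With these two corrections your argument is complete and matches the paper.
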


\begin{proof}
A  family $\big\{\Fib_S : \ttO/S \to \ttO \ | \ S \in \ttO\big\}$ of
functors 
is the same as a single functor $\Fib:\sfD(\ttO) \to \ttO$. It is easy
to see that $\Fib$ is a $\sfD$-algebra if and
only if  $\{\Fib_S \}_{S\in \ttO}$ are the fiber functors in
  Definition~\ref{Zacina mne zase svedit i leva ruka?}.  
\end{proof}

\begin{definition}
\label{Dnes prselo a bylo tesne nad nulou.}
The {\em tautological\/} unary operadic category 
generated by a small category $\ttA$ is the category
$\Tau(\ttA) :=\ttA \sqcup \sfD(\ttA)$ with the fiber functors given by the composite
\[
\sfD(\Tau(\ttA))=  
\sfD(\ttA \sqcup \sfD(\ttA)) \cong \sfD(\ttA) \sqcup \sfD^2(\ttA)
\stackrel{\id \sqcup \kappa}  \longrightarrow  \sfD(\ttA)
\hookrightarrow \ttA \sqcup \sfD(\ttA)= \Tau(\ttA).
\]
\end{definition}

Explicitly, the fiber of a morphism $g:S \to T$ in $\ttA \subset
\Tau(\ttA)$ is $g$ again, but
interpreted as an object of 
$\ttA/T \subset\sfD(\ttA) \subset \Tau(\ttA)$, while the fiber of a morphism $F: h \to
c$ of \/ $\sfD(\ttA) \subset \Tau(\ttA)$ represented by the diagram
\[
\xymatrix@R=.5em{&X \ar[dd]^f \ar[ld]_h
\\
T&
\\
&S\ar[ul]^c
}
\]
is $f \in \ttA/S \subset \sfD(\ttA)  \subset \Tau(\ttA)$.

The operadic  category $\Tau(\ttA)$ 
is an operadic subcategory of
$\sfD(\ttA_\odot)$, 
where~$\ttA_\odot$ is the result of formally
adjoining a terminal object $\odot$ to  $\ttA$, which means adding to the
morphisms of $\ttA$ the unit
endomorphism of $\odot$ and one new
morphism $X \stackrel!\to \odot$ for any object $X \in
\ttA$. The inclusion
\begin{equation}
\label{Nic nenalezeno}
\Tau(\ttA) = \ttA \sqcup \sfD(\ttA) \hookrightarrow \sfD(\ttA_\odot)
\end{equation}
sends an object $X \in \ttA$ to $X \stackrel!\to \odot \in
\sfD(\ttA_\odot)$ and objects $X \to Y$ of $\sfD(\ttA)$ into the
corresponding objects of $ \sfD(\ttA_\odot)$. The image of~(\ref{Nic
  nenalezeno}) covers all objects of $\sfD(\ttA_\odot)$ except  $\odot
\stackrel!\to \odot$. Inclusion~(\ref{Nic nenalezeno}) will be used 
in Section~\ref{Boli mne prava noha.} in our description of the operadic
categories of blobs.

\begin{proposition}
\label{Musim se objednat asi jeste dnes.}
The assignment $\ttA \mapsto \Tau(\ttA)$ gives rise to a (unital) monad in
${\tt Cat}$ whose algebras are (non-unital) operadic categories with
small sets of objects. 
\end{proposition}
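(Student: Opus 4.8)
The plan is to exhibit $\Tau$ as a sub-monad of a monad built out of the d\'ecollage comonad and then transport the algebra characterization. First I would observe that, by Corollary~\ref{Nesmim zapomenout prinest Jarce tac.}, $\sfD$ is a nonunital monad on $\ttCat$; applying the standard construction $M \mapsto M_+ := \id \sqcup M$ turns any nonunital monad into a unital one, so $\ttA \mapsto \ttA \sqcup \sfD(\ttA)$ carries a unital monad structure, with unit the coproduct inclusion $\ttA \hookrightarrow \ttA \sqcup \sfD(\ttA)$ and multiplication assembled from the monad multiplication $\kappa$ of $\sfD$ together with the identity on $\sfD(\ttA)$ — this is exactly the formula written in Definition~\ref{Dnes prselo a bylo tesne nad nulou.}, once one checks that the composite displayed there is the component of that multiplication. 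So the first step is: verify that $\Tau = (\,\cdot\,)_+$ applied to the nonunital monad $\sfD$, and record its unit and multiplication explicitly.

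Next I would identify $\Tau$-algebras with operadic categories. An algebra structure on a category $\ttO$ for the monad $\ttA \mapsto \ttA \sqcup \sfD(\ttA)$ is a functor $\ttO \sqcup \sfD(\ttO) \to \ttO$ satisfying the unit and associativity axioms; by the universal property of the coproduct this is the same as a pair $(\alpha, \Fib)$ with $\alpha : \ttO \to \ttO$ and $\Fib : \sfD(\ttO) \to \ttO$. The unit axiom for the $(\,\cdot\,)_+$-structure forces $\alpha = \id_\ttO$, so the data reduce to a single functor $\Fib : \sfD(\ttO) \to \ttO$; the remaining associativity axiom then becomes precisely the $\sfD$-algebra axiom for $\Fib$ (the $\id$-summand contributing only trivialities). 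By Proposition~\ref{Musim se objednat.}, $\sfD$-algebras on small-object categories are exactly unary nonunital operadic categories, which finishes the identification. I would also remark that morphisms of $\Tau$-algebras unwind to operadic functors in the sense of Definition~\ref{Svedeni snad prestava.}, so the equivalence is one of categories, not merely of objects.

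The main obstacle — really the only point demanding care — is bookkeeping with the distributivity iso $\sfD(\ttA \sqcup \sfB) \cong \sfD(\ttA) \sqcup \sfD(\sfB)$ and with how the comonad-turned-monad multiplication $\kappa : \sfD^2 \to \sfD$ interacts with coproducts, so that the formula in Definition~\ref{Dnes prselo a bylo tesne nad nulou.} really is the multiplication of the $(\,\cdot\,)_+$-monad and satisfies the monad axioms. Since $\sfD(\ttA) = \coprod_{c} \ttA/c$ and slices commute with the relevant coproducts objectwise, this is a routine diagram chase; I would isolate it as a lemma (``$(\,\cdot\,)_+$ takes nonunital monads to unital monads, and a $M_+$-algebra is the same as an $M$-algebra'') stated for a general nonunital monad $M$, prove that lemma once, and then simply instantiate $M = \sfD$ to obtain both Proposition~\ref{Musim se objednat asi jeste dnes.} and, as a bonus, the explicit description of the fibers in $\Tau(\ttA)$ given after Definition~\ref{Dnes prselo a bylo tesne nad nulou.}.
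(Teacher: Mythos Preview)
Your approach is correct and considerably more illuminating than the paper's own proof, which consists of the two words ``Direct verification.'' Your organization via the general ``unitalization'' $M \mapsto M_+ := \id \sqcup M$ of a nonunital monad, followed by the observation that $M_+$-algebras coincide with $M$-algebras, is exactly the right conceptual explanation of why Propositions~\ref{Musim se objednat.} and~\ref{Musim se objednat asi jeste dnes.} have the same algebras (and indeed this is what the Remark following Proposition~\ref{Musim se objednat asi jeste dnes.} gestures at). Your identification of the composite in Definition~\ref{Dnes prselo a bylo tesne nad nulou.} as the restriction of the monad multiplication $\mu_+$ to the $\sfD(\Tau(\ttA))$-summand is also correct.

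One small caveat: the general lemma you propose, ``$(\,\cdot\,)_+$ takes nonunital monads to unital monads and $M_+$-algebras are $M$-algebras,'' is not true for an arbitrary nonunital monad $M$ --- it requires that $M$ preserve the relevant binary coproducts, so that $M_+^2 X \cong X \sqcup MX \sqcup MX \sqcup M^2 X$ and the multiplication can be assembled from $\mu$ and the codiagonal. You clearly recognize this (you explicitly invoke the distributivity iso $\sfD(\ttA \sqcup \ttB) \cong \sfD(\ttA) \sqcup \sfD(\ttB)$), but when you isolate the lemma ``for a general nonunital monad $M$'' you should include coproduct-preservation as a hypothesis. With that amendment, your argument goes through cleanly.
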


\begin{proof}
Direct verification.
\end{proof}

\begin{remark} Comparing Propositions~\ref{Musim se objednat.} and~\ref{Musim se
  objednat asi jeste dnes.},   
one may wonder why two very different
monads have the same algebras. The explanation lies in the
unitality of $\Tau$ versus the non-unitality of~$\,\sfD$. 
As a~simple example of this phenomenon, 
consider the nonunital monad $\mathring {\sf T}$ in $\Set$
that sends a set $X$ to the coproduct $\coprod_{n \geq 2} X^{\times
  n}$, and the unital monad ${\sf T}$ given by ${\sf T} X :=\coprod_{n
  \geq 1} X^{\times  n}$. Both $\mathring {\sf T}$ and  ${\sf T}$ have
the same algebras, namely associative (non-unital)
monoids. Ideologically,  ${\sf T}$ is obtained from $\mathring {\sf
  T}$ by freely adjoining the monadic unit. The relation between
$\Tau(\ttA)$ and $\sfD(\ttA)$ is of the same nature. 
\end{remark}

In the following statement, $\Box : {\tt OpCat} \to  {\ttCat}$ is the
obvious forgetful functor from the category of unary (non-unital)
operadic categories with small sets of objects 
to the category of small categories.

\begin{proposition}
For an arbitrary small category $\ttA$ and an unary (non-unital) operadic
category~$\ttO$, one has a natural isomorphism of functor sets
\[
{\tt OpCat}(\Tau(\ttA),\ttO) \cong {\ttCat}(\ttA,\Box\ttO).
\]
In other words, $\Tau(\ttA)$ is the {\em free unary
non-unital operadic category\/} generated by a small category~$\ttA$.
\end{proposition}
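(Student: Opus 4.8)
The plan is to exhibit the adjunction explicitly. First I would define the unit of the adjunction: given a small category $\ttA$, the canonical inclusion $\iota_\ttA : \ttA \hookrightarrow \Box\Tau(\ttA)$ of $\ttA$ as the summand $\ttA \subset \ttA \sqcup \sfD(\ttA) = \Tau(\ttA)$. Then, given an operadic functor $\Phi : \Tau(\ttA) \to \ttO$, restricting along $\iota_\ttA$ yields a functor $\ttA \to \Box\ttO$; this is the map from left to right. The content of the proposition is that this restriction map is a bijection, so I must construct an inverse.

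Next I would construct the inverse: given an ordinary functor $G : \ttA \to \Box\ttO$, I need to extend it to an operadic functor $\widehat G : \Tau(\ttA) \to \ttO$. On the summand $\ttA$ we are forced to set $\widehat G := G$. On the summand $\sfD(\ttA)$, an object is an arrow $f : X \to S$ of $\ttA$, and since $\widehat G$ must commute with fiber functors, the explicit description of the fiber functors on $\Tau(\ttA)$ given in the excerpt (the fiber of the morphism $F : h \to c$ of $\sfD(\ttA)$ represented by that triangle is $f \in \ttA/S \subset \sfD(\ttA)$) forces the value of $\widehat G$ on $\sfD(\ttA)$ to be determined by its value on $\ttA$ together with the fiber functors of $\ttO$: concretely $\widehat G(f : X \to S)$ must be $\Fib^\ttO_{G(S)}\big(G(f) : G(X) \to G(S)\big)$, the $\ttO$-fiber of the image arrow. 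So the inverse is forced, and one checks it is well-defined on morphisms by the same reasoning applied to the description of $\Fib_c$ on morphisms in Lemma~\ref{Mozna mam jen otlacenou ruku od operadla.}.

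Then I would verify the two required properties. First, that $\widehat G$ so defined is genuinely an operadic functor, i.e.\ commutes with all the fiber functors of $\Tau(\ttA)$ and $\ttO$; this splits into the case of fibers of arrows in $\ttA$ (where the fiber in $\Tau(\ttA)$ is the arrow itself reinterpreted in $\sfD(\ttA)$, so compatibility is exactly the formula for $\widehat G$ on $\sfD(\ttA)$) and the case of fibers of arrows in $\sfD(\ttA)$ (where one uses the fiber-functor axiom~\eqref{Prodlouzi mi papiry?} for $\ttO$, equivalently the identity ``the fiber of a map equals the fiber of the induced map between its fibers'' displayed in~\eqref{Po navratu ze Zimni skoly.}). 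Second, that the two constructions are mutually inverse: restricting $\widehat G$ back to $\ttA$ returns $G$ by construction, and conversely any operadic $\Phi$ agrees with $\widehat{\Phi\iota_\ttA}$ because $\Phi$ commutes with fibers and the summand $\sfD(\ttA)$ of $\Tau(\ttA)$ is entirely built from $\ttA$ by applying fiber functors, so $\Phi$ is determined on it by its restriction to $\ttA$. Finally one notes naturality of the bijection in $\ttA$, which is routine since every map in sight is defined by composition. Alternatively, and more slickly, I would simply invoke Proposition~\ref{Musim se objednat asi jeste dnes.}: it already identifies $\Tau$-algebras with operadic categories, and a standard fact about the free–forgetful adjunction for a monad gives that $\Tau$ applied to the free object is the free algebra; but since the excerpt proves Proposition~\ref{Musim se objednat asi jeste dnes.} only by ``direct verification,'' the self-contained argument above is preferable.

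The main obstacle is the well-definedness and functoriality of the extension $\widehat G$ on the $\sfD(\ttA)$ summand and on morphisms between objects of $\sfD(\ttA)$: one must chase the tetrahedral/prism diagrams~\eqref{Lakuji si blokady.}–\eqref{Pojedu na Zimni skolu?} through $G$ and check that the induced maps between $\ttO$-fibers assemble correctly, using precisely the fiber-functor coherence~\eqref{Prodlouzi mi papiry?} in $\ttO$. Once that bookkeeping is done, the bijection and its naturality are immediate. I expect the write-up to be short, with the diagram chase being the only place where care is needed.
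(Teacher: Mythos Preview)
Your proposal is correct and is precisely what the paper means by ``Direct verification, cf.~\cite[Theorem~2.2]{EM}'': the paper gives no further details, and the Eilenberg--Moore reference is to the general free--forgetful adjunction for a monad, which is exactly the slick alternative you mention at the end. Your explicit unpacking of the bijection (and the identification of the fiber-functor coherence~\eqref{Prodlouzi mi papiry?} as the sole nontrivial check) is more than the paper itself provides.
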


\begin{proof}
Direct verification, cf.~\cite[Theorem~2.2]{EM}.
\end{proof}

We are going to introduce (non-unital) operads over (non-unital) unary
operadic categories. Our definition is the non-unital, unary version 
of~\cite[Definition~1.11]{duodel}. 
Unital operadic categories and unital operads will be the subject of
Section~\ref{Budu letat jeste pristi rok?}.

\begin{definition}
\label{Je to tady poustevnictvi.}
Let $\ttV = (\ttV, \ot, \unit)$ be a monoidal, not necessarily
symmetric, 
category and $\ttO$ an unary
operadic category. A~(non-unital) {\em operad\/} for $\ttO$ in $\ttV$,
or simply an {\em
 $\tt0$-operad\/}, is a collection \hbox{$\oP = \{\oP(A)\}_A$}
of objects of $\ttV$ indexed by objects of $\ttO$ together with
structure morphisms
\[
\gamma_h: \oP(F) \ot \oP(B) \longrightarrow \oP(A)
\]
given for any arrow $h : A \to B$ in $\ttO$ with fiber $F$. Moreover,
the  {\em associativity\/} requires that,
for any pair of composable arrows $A \stackrel
f\to B \stackrel g\to C$ in $\ttO$, the diagram
\begin{equation}
\label{Uz se prohlidky blizi.}
\xymatrix@C=3em{\oP(F) \ot \oP(Y) \ot \oP(C)   \ar[d]_{\gamma_{f_C} \ot
    \id}
\ar[r]^(.6){\id \ot \gamma_g}    &\oP(F) \ot \oP(B) \ar[d]^{\gamma_f}
\\
\oP(X) \ot \oP(C) \ar[r]^{\gamma_{gf}}
&\oP(A)
}
\end{equation}
commutes. The meaning of the symbols in that diagram is explained
by the following instance of~(\ref{Po navratu ze Zimni skoly.}):
\begin{equation}
\label{Prohlidka 30.brezna 2022.}
\xymatrix@R=0.1em@C=.8em{\hskip -1.8em F\ \fib X
  \ar[rr]^{f_C}   
&&Y \hskip -.5em
\\
\hskip -2em\raisebox{.7em}{\rotatebox{270}{$=$}}\hskip 1.65em\raisebox{.7em}{{\rotatebox{270}{$\fib$}}} && 
\hskip -.5em\raisebox{.7em}{{\rotatebox{270}{$\fib$}}} \hskip -.5em
\\
\hskip -1.8em F\ \fib A \ar[rr]^f  \ar@/_.9em/[ddddddr]_{gf}   &&
B \ar@/^.9em/[ddddddl]^{g} \hskip -.5em
\\&& 
\\&& 
\\&& 
\\&&
\\ &&
\\
&\ C .&
}
\end{equation}
\end{definition}

We will see later in this paper that operads over unary operadic
categories share many features with associative algebras, 
but also ones that do not have analogs in classical
algebra. We believe that they in their own rite might present an 
interesting theme of research.

\begin{definition}
\label{Tento tyden byl Dominik v Praze.}
Let $\Phi : \ttO' \to \ttO''$ be an operadic functor and $\oP$ an
$\ttO''$-operad. The {\em restriction\/} of $\oP$ along $\Phi$ is the
$\ttO'$-operad $\Phi^*(\oP)$ with components 
$\Phi^*(\oP)(s) := \oP\big(\Phi(s)\big)$, $s \in \ttO''$.
\end{definition}

\begin{example}
\label{Zitra mam sluzbu Radio.}
Let $\ttA$ be a small category and \/ $\sfD(\ttA)$ the associated operadic
category~\eqref{Jesu salvator saeculi}.  A \/
$\sfD(\ttA)$-operad in $\ttV$ is the same as a collection $\oA = \{\oA(f)\}_f$ of
objects of $\ttV$ indexed by morphisms of $\ttA$ with a
`multiplication' $\oA(f) \ot \oA(g) \to \oA(gf)$ for any pair of
composable arrows $A \stackrel f\to B \stackrel g\to C$ of $\ttA$. 
The~multiplication is required to be associative in the obvious sense.
Intuitively, $\oA$ is a non-unital ${\rm Mor}(\ttA)$-graded 
associative algebra in $\ttV$. The unital version of this example has
a nice categorical interpretation, cf.~Example~\ref{PCR test pozitri} below.
\end{example}

\begin{example}
\label{Zitra mam sluzbu.}
An operad over the tautological operadic category  $\Tau(\ttA)$ 
in Definition~\ref{Dnes prselo a bylo tesne nad nulou.} 
is the same as a pair $(\oA,\oM)$ of 
a $\sfD(\ttA)$-operad $\oA$ as in
Example~\ref{Zitra mam sluzbu Radio.}
and a collection $\oM = \big\{\oM(A)\big\}_{A\in \ttA}$ of objects of $\ttV$ indexed by objects of
$\ttA$, equipped with the `actions' $\oA(f) \ot \oM(B) \to \oM(A)$
given for any morphism  $A \stackrel f\to B$ in $\ttA$. It is moreover
required that the diagram
\[
\xymatrix{\oA(f) \ot \oA(g)\ot \oM(C) \ar[r] \ar[d] &\ar[d] \oA(f) \ot \oM(B)
\\
\oA(gf)\ \ot \oM(C) \ar[r] &\oM(A)
}
\]
commutes for any pair  $A \stackrel
f\to B \stackrel g\to C$ of composable morphisms in $\ttA$.
Intuitively,  $\oM$ is a left ${\rm Ob}(\ttA)$-graded 
$\oA$-module.
\end{example}

\section{The shades of unitality}
\label{Budu letat jeste pristi rok?}

In this section we discuss sundry versions of unitality for operadic
categories and their operads, with or without the presence of chosen local
terminal objects. This refinement of definitions given in~\cite{duodel}
is required in Part~2 by the applications to the blob complex.

\begin{definition}
\label{Poleti se zitra?}
Suppose  that the set $\pi_0(\ttO)$ of path-connected components of a unary
operadic category~$\ttO$ is small and a family 
\begin{equation}
\label{Jsem posledni den v Cuernavace.}
\big\{U_c \in \ttO \ | \ c \in \pi_0(\ttO) \big\}
\end{equation}
of {\em chosen local terminal objects\/} is specified, with $U_c$
belonging to the connected component~$c$. 

We say that $\ttO$ is {\em
  left unital\/} if the fiber functor 
$\Fib_S : \ttO/S \to \ttO$ sends the identity
automorphism $\id:S \to S$ to one of the chosen local terminal objects of
$\ttO$, for each $S \in \ttO$.  The category $\ttO$ is {\em right
  unital\/} if $\Fib_{U_c} : \ttO/U_c \to \ttO$ is the domain functor
for each $c \in \pi_0(\ttO)$. Finally, $\ttO$ is {\em unital\/} if it
is both left and right unital.  An operadic functor between left and/or
right unital operadic categories is assumed to preserve the chosen
local terminal objects.
\end{definition}

Unital operadic categories in the sense of the above definition are
precisely the unary versions of operadic categories introduced 
in~\cite[Section~1]{duodel}.  

\begin{remark}
\label{Dovoli mi bradavice lyzovat?}
A choice of local terminal objects is equivalent to the existence of
a sequence of maps
$s_0 := U:\pi_0(\ttO)\to \ttO_0$ and $s_{n+1}:\ttO_n\to \ttO_{n+1}, \ n\ge
0$, in the nerve of $\ttO$ which makes the diagram 
\[
\xymatrix@C=7em{\pi_0(\ttO_0) \ar@/^1em/[r]|-{s_0}     
& \ttO_0 \ar[l]|-{d_0}     
\ar[r]|-{s_0}
\ar@/^2em/[r]|-{s_1}
& \ttO_1   \ar@/^1em/[l]|-{d_0}  \ar@/_1em/[l]|-{d_1}  
\ar@/_1em/[r]|-{s_0} \ar@/^1em/[r]|-{s_1} \ar@/^3em/[r]|-{s_2}
&\ttO_2   \ar[l]|-{d_1}\ar@/^2em/[l]|-{d_0}\ar@/_2em/[l]|-{d_2}
&\hskip -10em\cdots
}
\]
in which $d_0: \ttO_0 \to \pi_0(\ttO_0)$ assigns to an object of
$\ttO$ the corresponding connected component, 
an~`almost' simplicial set.  All simplicial identities that make sense 
should be satisfied
by this new degeneracy operators. Another way to say it is that specifying
locals terminal is the same as to equip $\ttO$ with a
$\sfD$-coalgebra structure \cite[Proposition~5]{GarnerKockWeber}.

The only thing which is missing to make this new complex a simplicial
set is the top face operator $d_{n+1}: \ttO_n\to\ttO_{n-1}$.
As we saw in Remark~\ref{Dnes mi vypalovali bradavici.}, 
a fiber functor on $\ttO$ adds such an operator for all $n>0.$ 
We can also define 
\[
d_1:\ttO_0\to \pi_0(\ttO)
\] 
as the connected component of the fiber of the identity automorphism.
The left unitality then says that these new degeneracy and face operators
satisfy the usual simplicial identities 
\[
d_{n+1}s_i = s_i d_n
\]
for all $i<n$. 
The right unitality implies 
that these operators satisfy the remaining relation 
\[
d_{n+1}s_n = \id.
\] 
And, of course, right and left unitality together mean that the
diagram 
\[
\xymatrix@C=7em{\pi_0(\ttO_0) \ar[r]|-{s_0}     
& \ttO_0
\ar[r]|-{s_0} \ar@/^1em/[l]|-{d_0}\ar@/_1em/[l]|-{d_1}
\ar@/^2em/[r]|-{s_1}
& \ttO_1   \ar@/^1em/[l]|-{d_0}  \ar@/_1em/[l]|-{d_1}
 \ar@/_3em/[l]|-{d_2}  
\ar@/_1em/[r]|-{s_0} \ar@/^1em/[r]|-{s_1} \ar@/^3em/[r]|-{s_2}
&\ttO_2   \ar[l]|-{d_1}\ar@/^2em/[l]|-{d_0}\ar@/_2em/[l]|-{d_2}
 \ar@/_4em/[l]|-{d_3}  
&\hskip -10em\cdots
}
\]
is a simplicial set. The unital unary operadic category 
structure on $\ttO$ is exactly this
extra structure on the nerve of $\ttO$, 
which can be  deduced from the characterization 
of unary operadic categories given in~\cite[Section~4]{GarnerKockWeber}.  

For general operadic categories there exists a similar but more subtle
characterization which involves actions of the symmetric groups on
fibers. This will be the subject of an upcoming work~\cite{BataninKockWeber}.
\end{remark}

\begin{exercise}
\label{Pres vikend nebude pocasi nic moc.}
The operadic category $\sfD(\ttA)$ is unital, with the set of the chosen
local terminal objects
$\big\{c \stackrel\id\to c \in \sfD(\ttA) \ | \ c \in \ttA\big\}$.  The
tautological operadic category $\Tau(\ttA)$ is left unital if and only if
each connected component of $\ttA$ has a terminal object. It is however
never right unital.

To see why, assume that $U \in \ttA$ is a local terminal
object of $\ttA$. By definition, the fiber of a~morphism $a \to U$ is  $a \to U$
again, but now being an object $\sfD(\ttA) \subset \Tau(\ttA)$. Thus the
fiber of  $a \to U$ is not $a$, so $\Fib_U   :\Tau(\ttA)/U \to
\Tau(\ttA)$ is not the domain functor as required by the right
unitality. The~inclusion $\sfD(\ttA) \subset \Tau(\ttA)$ is an example
of a unital operadic subcategory of a non-unital one.
\end{exercise}

\begin{example}
\label{Dnes je tady chladneji.}
Each unital associative monoid $A$ determines a unital unary
operadic category $\CA$ as follows. Its objects are the elements of $A$,
and morphisms are pairs $(x,a) : xa \to a$, for $a,x \in A$. The
composition of the chain 
\[
\xymatrix{
yxa \ar[r]^{(y,xa)} & xa \ar[r]^{(x,a)}& a
}
\]
is the morphism $(yx,a) : yxa \to a$. The identity automorphisms are
the pairs $(e,a) : a  \to a$, where $e$ is the unit of $A$. The fiber
of $(x,a): xa \to x$ is $x$ and the only chosen terminal object is $e$. 
This example and also Examples~\ref{Mam prodlouzene ARC pro Tereje.}
and~\ref{Vecer mi to bliskalo.} below are put in more general context in
Exercise~\ref{Sedim v lese.}. 
\end{example}

\begin{example}
\label{Mam prodlouzene ARC pro Tereje.}
Suppose that the associative monoid $A$ in Example~\ref{Dnes je tady chladneji.}  
possesses, instead of a two-sided unit $e$, 
a family $\{e_b\ | \ b \in A\}$ such that 
each $e_b$ is a right unit for $A$, i.e.\
\begin{subequations}
\begin{equation}
\label{u1}
z\, e_t = z\  \mbox { for each }\  z, t \in A,
\end{equation} 
and a form of the left unitality requiring that
\begin{equation}
\label{u2}
e_{t} \,t =  t \ \hbox { and }    \
e_{tb} \,t = t  \  \mbox { for each }\  t, b \in A,
\end{equation}
\end{subequations}
is fulfilled. Notice that the first equation of~(\ref{u2}) with $b:= e_b$ implies the
second one, but for the reasons explained in the next paragraph 
we keep both of
them. We will call the structures $\big(A,\{e_t\}_{t\in A}\big)$ as above {\em
  pseudo-unital monoids.\/}

Let  $\CA$ be the modification of the category constructed in 
Example~\ref{Dnes je tady chladneji.} with the unit
automorphisms defined by $(e_a,a) : a  \to a$, for $a \in A$.
The first equation in~(\ref{u2}) guarantees that $(e_a,a)$
is indeed an automorphism of $a$, the second one that
$\{(e_t,t)\}_{t \in A}$ are the left units for the composition in
$\CA$, and~(\ref{u1}) that they are the right units. 
We leave as an exercise to prove that \hfill\break  
\hglue 1em -- each $e_t$ is a global terminal object of $\CA$, and \hfill\break 
\hglue 1em -- with an arbitrary $e_t$ as the chosen terminal
object, $\CA$ is a right unital operadic category. \hfill\break 
Moreover,
\[
\hbox{$\CA$ is unital} \iff \hbox{$\CA$ is left unital} \iff
\hbox{$A$ is unital.}
\]
\end{example}

\begin{example}
\label{Vecer mi to bliskalo.}
The set $A : = \{u,v\}$  with a binary operation
given by
\[
uu := u,\ vv := v,\ uv := u \ \mbox { and  } \ vu := u, 
\]
and the `pseudo-units' $e_u := u$ and $e_v := v$, is a pseudo-unital
monoid in the sense of Example~\ref{Mam prodlouzene ARC pro Tereje.}.
The associated category $\CA$ consists of two isomorphic objects $u$
and $v$, related by the isomorphisms $(u,v) : u \to v$ and  $(v,u) : v \to u$.
 
More generally, an arbitrary set
$X$ with a multiplication $X \times X \to X$ given by the projection to the first
factor and pseudo-units $e_t := t$ for $t \in X$ is a pseudo-unital
monoid. The associated operadic category $\ttO_X$ is the chaotic
groupoid generated by $X$. 
\end{example}

\begin{remark}
\label{Pujdu na ocni?}
Let us try to `categorify' pseudo-unital monoids of Example~\ref{Mam
  prodlouzene ARC pro Tereje.} by assembling  the pseudo-units 
$\{e_b\ | \ b \in A\}$
to a single map $\eta:A \to A$ with $\eta(b) := e_b$, $b \in A$. 
The right unitality~(\ref{u1}) is expressed by the diagram
\[
\xymatrix@R=2em{A \times A   \ar[r]^\mu   & A \ar@{=}[d]
\\
\ar[u]^{\id \times \eta} \ar[r]^{\pi_1}
A \times A & A
}
\]
in which $\mu$ is the multiplication in $A$ 
and $\pi_1$ the projection to
the second factor. Similarly,~(\ref{u2}) can be expressed via the diagrams
\[
\xymatrix{A\times A   \ar[r]^{\eta \times \id} & A\times A\ar[d]^\mu
\\
\ar[u]^\Delta
A  \ar@{=}[r]    &A
}
\ \hbox { and } \
\xymatrix{A \times A \times A \ar[r]^{\id \times \tau} &
\ar[r]^{\mu \times \id}A \times A \times A & A\times A
\ar[d]^{\eta \times \id}
\\
\ar[u]^{\Delta \times \id} A\times A 
\ar[r]^{\pi_1}& A &\ar[l]_\mu A\times A
}
\]
where $\Delta$ is the diagonal $a \mapsto a \times a$, $a \in A$.
Since the diagrams above
involve the projection and diagonal, 
pseudo-unitality admits a categorification
only inside a
cartesian monoidal category.
\end{remark}

Below we present three versions of the unitality for
$\ttO$-operads. The unitality in the sense of the
first one is precisely the unary version of the standard
definition~\cite[Definition~1.11]{duodel}.

\begin{definition}
\label{9 dni v Mexiku}
Assume that the unary operadic category $\ttO$ is unital in the sense
of Definition~\ref{Poleti se zitra?}, with the set~(\ref{Jsem posledni den v Cuernavace.}) of chosen
local terminal objects $\big\{U_c \ | \ c
  \in \pi_0(\ttO)\big\}$.
Let $\oP$ be  an $\ttO$-operad in $\ttV$
equipped with a family of morphisms
\begin{equation}
\label{Zitra jedem na pyramidy.}
\big\{\eta_c : \unit \to \oP(U_c) \ | \ c \in \pi_0(\ttO)\big\}.
\end{equation}
We say that $\oP$ is
{\em left unital \/} if, for any  $T\in \ttO$,
the diagram
\begin{equation}
\label{Za chvili zacnu.}
\xymatrix{\oP(U_c) \ot \oP(T) \ar[r]^(.62){\gamma_{\unit}} & \oP(T) \ar@{=}[d]
\\
\ar[u]^{\eta_{c} \ot \id}
\hbox{\hskip 2em $\unit \ot \oP(T)$}\ar[r]^(.6)\cong & \oP(T)
}
\end{equation} 
in which $U_c$ is the fiber  of  the identity automorphism $\id : T \to T$,
commutes.

The operad $\oP$ is {\em right unital \/} if, 
for any $F \in \ttO$ and the unique  morphism $!  : F \to U_c$ to
some local chosen terminal object $U_c$, the diagram
\[
\xymatrix{\oP(F) \ot \oP(U_c) \ar[r]^(.62){\gamma_{!}} & \oP(F) \ar@{=}[d]
\\
\ar[u]^{\id \ot \eta_c}
\hbox{\hskip -1.8em $\oP(F) \ot \unit$}\ar[r]^(.5)\cong & \oP(F)
}
\]  
commutes.
Finally, $\oP$ is {\em unital\/} if it is both left and right
unital.
\end{definition}

If the background monoidal category is the cartesian
category $\Set$ of sets, the family~(\ref{Zitra jedem na pyramidy.}) is determined by a choice of {\em units\/} $e_c  := \eta_c(\star)
\in \oP(U_c)$, $c \in \pi_0(\ttO)$, where $\star$ is the unique
element of the monoidal unit $\{\star\}$ of $\Set$.

\begin{example}
\label{PCR test pozitri}
Let $\oA$ be the operad over the operadic category $\sfD(\ttA)$ described
in Example~\ref{Zitra mam sluzbu Radio.}.
Exercise~\ref{Pres vikend nebude pocasi nic moc.} tells us that 
$\,\sfD(\ttA)$ is a unital operadic category  with the chosen
local terminal objects $\big\{\id_c :c \to c \in \sfD(\ttA)
\ | \ c \in \ttA\big\}$. 
Assume the existence of a family
$\{\eta_c : \unit \to \oA(\id_c)\}_{c \in \ttA}$ of morphisms in $\ttV$ indexed
by objects of $\ttA$. The left (resp.~right) unitality of $\oA$ is
then expressed by the  left (resp.~right) diagram below:
\[
\xymatrix{\oA(\id_c) \ot \oA(f) \ar[r] & \oA(f) \ar@{=}[d]
\\
\ar[u]^{\eta_c \ot \id}
\hbox{\hskip 2em $\unit \ot \oA(f)$}\ar[r]^(.6)\cong & \oA(f)
}
\hskip 4em
\xymatrix{\oA(f) \ot \oA(\id_B) \ar[r] & \oA(f) \ar@{=}[d]
\\
\ar[u]^{\id \ot \eta_B}
\hbox{\hskip -1.6em $\oA(f) \ot \unit$}\ar[r]^(.6)\cong & \oA(f)
}
\] 
which are required to commute for any morphism $f: c \to B$ of $\ttA$.

We notice that unital  $\sfD(\ttA)$-operads in $\ttV$ are the same as 
lax $2$-functors $\ttA \to \Sigma \ttV$, with  $\ttA$ considered
as a $2$-category with trivial $2$-cells, and
$\Sigma \ttV$ the $2$-category with one object and $\ttV$ as the
category of morphisms. In particular, if $\ttA$ is the chaotic 
groupoid $\Cha(I)$ on the set $I$, then \hbox{$\sfD(\ttA)$-operads} are
small $\ttV$-enriched categories with the sets of objects $I$. 
\end{example}

The next version of unitality makes sense also if the base operadic
category $\ttO$ is not unital, i.e.~when the chosen local terminal objects as in
Definition~\ref{Svedeni snad prestava.} are not available.

\begin{definition}
\label{O vikendu ma byt krasne a ja trcim v Mexiku.}
Let $\oP$ be an $\ttO$-operad equipped with a family of morphisms
\begin{equation}
\label{Nejdriv si musim prinytovat kolo.}
\big\{\eta_T : \unit \to \oP(U_T)
\ | \ T \in \ttO, \ \hbox {$U_T$ is the fiber of $\id : T \to T$} \big\}.
\end{equation}
We say that $\oP$ is
{\em left unital \/} if, for any $T \in \ttO$, 
the diagram
\begin{equation}
\label{Jeste neco pres tyden v Mexiku.}
\xymatrix{\oP(U_T) \ot \oP(T) \ar[r]^(.62){\gamma_{\unit}} & \oP(T) \ar@{=}[d]
\\
\ar[u]^{\eta_{T} \ot \id}
\hbox{\hskip 2em $\unit \ot \oP(T)$}\ar[r]^(.6)\cong & \oP(T)
}
\end{equation} 
commutes.

For an arbitrary morphism $f: T \to S$ of $\ttO$, the axioms of 
unary operadic categories provide the diagram
\begin{equation}
\label{Asi jsem jednal nevhodne.}
\xymatrix@R=0.1em@C=.8em{\hskip -1.8em F\ \fib F
  \ar[rr]^{f_T}   
&&U_T \hskip -.5em
\\
\hskip -1.9em\raisebox{.7em}{\rotatebox{270}{$=$}}\hskip 1.55em\raisebox{.7em}{{\rotatebox{270}{$\fib$}}} && 
\hskip -.5em\raisebox{.7em}{{\rotatebox{270}{$\fib$}}} \hskip -.8em
\\
\hskip -1.8em F\ \fib S \ar[rr]^f  \ar@/_.9em/[ddddddr]_{f}   &&T \ar@/^.9em/[ddddddl]^{\id} \hskip -.5em
\\&& 
\\&& 
\\&& 
\\&&
\\ &&
\\
& T &
}
\end{equation}
with $f_T$ the induced map between fibers. The {\em right unitality\/} 
requires that the induced diagram
\[
\xymatrix{\oP(F) \ot \oP(U_T) \ar[r]^(.62){\gamma_{f_T}} & \oP(F) \ar@{=}[d]
\\
\ar[u]^{\id \ot \eta_T}
\hbox{\hskip -1.8em $\oP(F) \ot \unit$}\ar[r]^(.6)\cong & \oP(F)
}
\]  
commutes for an arbitrary morphism $T \stackrel f\to S$ of $\ttO$.
The operad $\oP$ is {\em unital\/} if it is both left and right
unital.
\end{definition}

\begin{example} 
Consider the algebra $(\oA,\oM)$
over the (non-unital) tautological operadic category  $\Tau(\ttA)$
described in  Example~\ref{Zitra mam sluzbu.}. The left unitality of $(\oA,\oM)$
requires that the $\sfD(\ttA)$-operad $\oA$ is left unital,
cf.~Example~\ref{PCR test pozitri}, and the commutativity of the diagram 
\[
\xymatrix{\oA(\id_c) \ot \oM(c) \ar[r] & \oM(c) \ar@{=}[d]
\\
\ar[u]^{\eta_c \ot \id}
\hbox{\hskip 2em $\unit \ot \oM(c)$}\ar[r]^(.6)\cong & \ \oM(c)
}
\]
for all $c \in \ttA$.
The the right unitality of  $(\oA,\oM)$ is just the right
unitality of $\oA$.
Notice that when  $\ttV$  
is the cartesian category of sets and
$\oA$ the terminal $\sfD(\ttA)$-operad, i.e.\ when if $\oA(f)$
is the one-point set for each morphism~$f$ of $\ttA$, 
unital  $\Tau (\ttA)$-operads are the same as presheaves over $\ttA$.
\end{example}

\begin{proposition}
\label{Poleti to zitra?}
Suppose that the operadic category \/ $\ttO$ is unital.  Then the left
(resp.~right) unitality in the sense of  Definition~\ref{9 dni v
  Mexiku} implies the  left
(resp.~right) unitality in the sense of
Definition~\ref{O vikendu ma byt krasne a ja trcim v Mexiku.}.
In the opposite direction, the (two-sided) unitality of Definition~\ref{O vikendu
  ma byt krasne a ja trcim v Mexiku.} implies the (two-sided) unitality of
Definition~\ref{9 dni v   Mexiku}. 
\end{proposition}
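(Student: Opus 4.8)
The plan is to build the dictionary between the two indexing schemes for the ``units'', verify the forward implications by a direct comparison of the defining diagrams, and then treat the backward implication, whose one non-formal ingredient is the uniqueness of a two-sided unit.

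Throughout, write $\lambda,\rho$ for the left and right unit constraints of the monoidal category $\ttV$. Since $\ttO$ is unital, for each $T\in\ttO$ the fiber $U_T:=\Fib(\id_T)$ of the identity is one of the chosen local terminal objects, say $U_T=U_{c(T)}$, and the index $c(T)\in\pi_0(\ttO)$ is uniquely determined (two chosen local terminals lie in distinct components). Because $\ttO$ is moreover right unital, $\Fib_{U_c}$ is the domain functor; in particular $\Fib(\id_{U_c})=U_c$, so that $\eta_{U_c}$, whenever defined, is a morphism $\unit\to\oP(U_c)$, and for any object $F$ in the component $c$ the unique morphism $!:F\to U_c$ satisfies $\Fib(!)=F$, while the map $F\to U_c$ that the instance of~\eqref{Asi jsem jednal nevhodne.} for the morphism $!$ produces as $f_T$ is again $!$. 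I will use these observations freely.

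\emph{Forward direction.} Assume $\oP$ is left (resp.\ right) unital in the sense of Definition~\ref{9 dni v Mexiku}, with family $\{\eta_c\}_{c\in\pi_0(\ttO)}$, and define the family required by Definition~\ref{O vikendu ma byt krasne a ja trcim v Mexiku.} by $\eta_T:=\eta_{c(T)}$, which makes sense since $\oP(U_{c(T)})=\oP(U_T)$. With this choice the square~\eqref{Jeste neco pres tyden v Mexiku.} for $T$ is literally the square~\eqref{Za chvili zacnu.} for $T$, so left unitality transfers at once. For the right unitality square of Definition~\ref{O vikendu ma byt krasne a ja trcim v Mexiku.} attached to a morphism $f$ with codomain $B$: its map $f_T:\Fib(f)\to U_B$ is a morphism into the chosen local terminal object of the component of $\Fib(f)$, hence equals the unique such morphism, so $\gamma_{f_T}=\gamma_!$ and $\eta_B=\eta_{c(\Fib(f))}$; consequently that square coincides with the right unitality square of Definition~\ref{9 dni v Mexiku} for the object $\Fib(f)$, which commutes by assumption.

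\emph{Backward direction.} Now $\oP$ carries a family $\{\eta_T\}_{T\in\ttO}$ satisfying both halves of Definition~\ref{O vikendu ma byt krasne a ja trcim v Mexiku.}, and I must produce a family $\{\eta_c\}_{c\in\pi_0(\ttO)}$ verifying both halves of Definition~\ref{9 dni v Mexiku}; I set $\eta_c:=\eta_{U_c}$. The main obstacle is the identity $\eta_T=\eta_{U_T}$ for every $T$. To prove it, equip $\oP(U_T)$ with the multiplication $m:=\gamma_{\id_{U_T}}:\oP(U_T)\ot\oP(U_T)\to\oP(U_T)$. Left unitality of Definition~\ref{O vikendu ma byt krasne a ja trcim v Mexiku.} at the object $U_T$ gives $m\circ(\eta_{U_T}\ot\id)=\lambda$, i.e.\ $\eta_{U_T}$ is a left unit for $m$; right unitality at the morphism $\id_T$ — for which the induced map $f_T$ in~\eqref{Asi jsem jednal nevhodne.} is $\id_{U_T}$, so that the relevant structure map is again $m$ — gives $m\circ(\id\ot\eta_T)=\rho$, i.e.\ $\eta_T$ is a right unit for $m$. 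Evaluating $m\circ(\eta_{U_T}\ot\eta_T):\unit\ot\unit\to\oP(U_T)$ in the two obvious ways and using the naturality of $\lambda$ and $\rho$ yields $\eta_{U_T}\circ\rho_\unit=\eta_T\circ\lambda_\unit$; since $\lambda_\unit=\rho_\unit$ and this morphism is invertible, $\eta_{U_T}=\eta_T$. Granting the identity, Definition~\ref{9 dni v Mexiku} left unitality for $T$ is Definition~\ref{O vikendu ma byt krasne a ja trcim v Mexiku.} left unitality for $T$ rewritten with $\eta_{c(T)}=\eta_{U_T}=\eta_T$; and Definition~\ref{9 dni v Mexiku} right unitality for an object $F$ in a component $c$ is obtained by feeding the morphism $!:F\to U_c$ itself into the right unitality of Definition~\ref{O vikendu ma byt krasne a ja trcim v Mexiku.}, since then $\Fib(!)=F$ and the associated $f_T$ equals $!$, so the resulting square is precisely $\gamma_!\circ(\id\ot\eta_c)=\rho$. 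The delicate point to monitor is which instance of~\eqref{Asi jsem jednal nevhodne.} is being invoked at each step; note that both halves of Definition~\ref{O vikendu ma byt krasne a ja trcim v Mexiku.} genuinely enter the proof of $\eta_T=\eta_{U_T}$, which is the reason the backward implication is claimed only for two-sided unitality.
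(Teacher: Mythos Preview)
Your proof is correct and follows essentially the same approach as the paper's. Both directions build the same dictionary $\eta_T\leftrightarrow\eta_c$, and for the backward implication both hinge on the identity $\eta_T=\eta_{U_T}$, proved via the ``left unit equals right unit'' argument on $\oP(U_T)$: you phrase it as a monoid with multiplication $\gamma_{\id_{U_T}}$, the paper draws the corresponding pentagon explicitly, but the content is identical. One minor notational slip: in the forward right-unitality paragraph you write $\eta_B=\eta_{c(\Fib(f))}$, but by your own convention $c(X)$ denotes the component of $U_X$, not of $X$; what you need (and what you actually argue) is $\eta_B=\eta_c$ with $c$ the component of $\Fib(f)$, which follows since $U_B$ lies in that component.
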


\begin{remark}
Notice that we do not claim that 
the left (resp.~right) unitality of 
Definition~\ref{O vikendu ma byt krasne a ja trcim v Mexiku.}
alone implies the left (resp.~right) unitality of
Definition~\ref{9 dni v  Mexiku}; 
the second part of the
proposition is true only for the (two-sided) unitality. This shall be
compared with the obvious fact that, while an associative algebra admits
at most one two-sided unit, it might have several left or~right
units. An immediate implication of the proposition is that, for operads
over unital unary 
operadic categories, the two definitions provide equivalent notions of
(two-sided) unitality.
\end{remark}

\begin{proof}[Proof of Proposition~\ref{Poleti to zitra?}]
Let $\big\{U_c \ | \ c \in \pi_0(\ttO)\big\}$ be the set~(\ref{Jsem
    posledni den v Cuernavace.}) of chosen local terminal objects
  of~$\ttO$. 
Since the fiber of each identity automorphism in a unital
  unary operadic category is a chosen local terminal object, each $U_T$ in
  Definition~\ref{O vikendu ma byt krasne a ja trcim v Mexiku.}
  equals $U_c$ for some $c \in \pi_0(\ttO)$ uniquely determined by
  $T$. Therefore the
  family~(\ref{Zitra jedem na pyramidy.}) determines
  a~family~(\ref{Nejdriv si musim prinytovat kolo.}) which clearly
  fulfills the left (resp.~right) unitality if the family~(\ref{Zitra
    jedem na pyramidy.})  does. Thus the left (resp.~right) unitality
  of Definition~\ref{9 dni v Mexiku} implies the left (resp.~right)
  unitality of Definition~\ref{O vikendu ma byt krasne a ja trcim v
    Mexiku.}.

Suppose now that $\oP$ is unital in the sense of Definition~\ref{O
  vikendu ma byt krasne a ja trcim v Mexiku.} and prove
that the  map $\eta_T$ in the
family~(\ref{Nejdriv si musim prinytovat kolo.}) 
depends only on $U_T$, not on a concrete $T$. By this we mean that if 
$T$ and $S$ are objects of $\ttO$ such that the identity 
automorphisms $T \to T$ and
$S \to S$ have the same fiber, say $U$, then $\eta_{T} =
\eta_{S}$. It clearly suffices to verify this property for $S=U$.
To show that $\eta_T = 
\eta_{U}$, consider the diagram 
\begin{equation}
\label{Kazdy treti den}
\xymatrix@C=.1em{
&\oP({U})&
\\
&\oP({U}) \ot \oP({U})   \ar[u]^{\gamma_\id}&
\\
\oP({U}) \ot \unit \ar[ur]^{\id \ot \eta_T}  \ar@/^2.2em/[uur]^\cong  && \unit \ot \oP({U})
\ar[ul]_{\eta_{U} \ot \id}
\ar@/_2.2em/[uul]_\cong
\\
&  \ar[ul]_{\eta_{U} \ot \id}\unit \ot \unit \ar[ur]^{\id \ot \eta_T}
&
\\
&\unit \ar[u]^\cong&
}
\end{equation}
associated to the diagram
\[
\xymatrix@R=0.1em@C=.8em{\hskip -1.8em {U}\ \fib {U}
  \ar[rr]^{\id}   
&&{U} \hskip -.5em
\\
\hskip -1.9em\raisebox{.7em}{\rotatebox{270}{$=$}}\hskip 1.55em\raisebox{.7em}{{\rotatebox{270}{$\fib$}}} && 
\hskip -.5em\raisebox{.7em}{{\rotatebox{270}{$\fib$}}} \hskip -.8em
\\
\hskip -1.8em {U}\ \fib T \ar[rr]^\id  \ar@/_.9em/[ddddddr]_{\id}   &&T \ar@/^.9em/[ddddddl]^{\id} \hskip -.5em
\\&& 
\\&& 
\\&& 
\\&&
\\ &&
\\
&\ T .&
}
\]
The commutativity of~(\ref{Kazdy treti den}) 
follows from the left unitality of 
$\eta_{U}$ and the right unitality of $\eta_T$. It~implies 
that the composite of the leftmost arrows equals the composite of
the rightmost arrows, i.e.\ $\eta_U = \eta_T$.

The family~(\ref{Nejdriv si musim prinytovat kolo.}) thus determines a
family~(\ref{Zitra jedem na pyramidy.}) by $\eta_c :=
\eta_{U_c}$ that satisfies the left and right unitality of
Definition~\ref{9 dni v Mexiku}.  
For instance,~(\ref{Za chvili zacnu.}) is fulfilled with
$\eta_T$ in place of $\eta_c$ by~(\ref{Jeste neco pres tyden v
  Mexiku.}), but $\eta_T = \eta_c$ as proven above. The right
unitality is discussed similarly.  
\end{proof}

The following definition extends the concept of pseudo-unitality of
associative monoids introduced in Example~\ref{Mam prodlouzene ARC pro
  Tereje.} to operads. The background monoidal
category will be crucially the cartesian category of sets, 
cf.~Remark~\ref{Pujdu na ocni?}.

\begin{definition}
\label{Uz se tech prohlidek bojim.}
Let $\oS$ be an $\ttO$-operad in $\Set$ equipped with a family of elements
\begin{equation}
\label{Dnes jsem posledni den v Mexiku.}
\big\{
e_t
\in \oS(U_T) \ | \ T \in \ttO, \ t \in \oS(T),\ 
\hbox {$U_T$ is the fiber of $\id : T \to T$}
\big\}.
\end{equation}
Then $\oS$ is
{\em left pseudo-unital \/} if, 
for any $T \in \ttO$ and $t \in \oS(T)$, 
$
\gamma_{\id} (e_t,t) = t
$
and
\[
\gamma_{\id_C}(e_{\gamma_{\xi}(\rho,c)},\rho) = \rho 
\]
for arbitrary diagram
\begin{equation}
\label{Jak to dopadne s tou moji aurou?}
\xymatrix@R=0.1em@C=.8em{\hskip -1.8em U_T\ \fib R
  \ar[rr]^{\id_C}   
&&R\hskip -.5em
\\
\hskip -1.7em\raisebox{.7em}{\rotatebox{270}{$=$}}\hskip 2.2em\raisebox{.7em}{{\rotatebox{270}{$\fib$}}} && 
\hskip -.5em\raisebox{.7em}{{\rotatebox{270}{$\fib$}}} \hskip -.8em
\\
\hskip -1.8em U_T\ \fib T \ar[rr]^{\id}  \ar@/_.9em/[ddddddr]_{\xi}   &&T \ar@/^.9em/[ddddddl]^{\xi} \hskip -.5em
\\&& 
\\&& 
\\&& 
\\&&
\\ &&
\\
&\ C &
}
\end{equation}
of morphism in $\ttO$ and elements $\rho \in \oS(R)$, $c\in
\oS(C)$.
The operad $\oS$ is {\em right pseudo-unital \/} if,
in the situation of diagram~\eqref{Asi jsem jednal nevhodne.},
\[
\gamma_{f_T}(\varphi,e_t) = \varphi
\] 
for any $t \in \oS(U_T)$ and $\varphi \in \oS(F)$.
Finally, $\oS$ is {\em pseudo-unital\/} if it is both left and
right pseudo-unital.
In this case we call the elements of the collection~\eqref{Dnes jsem
  posledni den v Mexiku.} the {\em pseudo-units\/} of $\oS$. 
\end{definition}

\begin{proposition}
Assume that $\oS$ is an $\ttO$-operad in the category of sets, 
left (resp.~right) unital in the sense
of Definition~\ref{O vikendu ma byt krasne a ja trcim v Mexiku.}. Then
it is left (resp.~right) pseudo-unital. 
\end{proposition}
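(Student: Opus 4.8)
Since the monoidal unit of $\Set$ is the one-point set $\{\star\}$, the family~\eqref{Nejdriv si musim prinytovat kolo.} is a choice of elements $\eta_T(\star)\in\oS(U_T)$, one per object $T$, where $U_T$ is the fiber of $\id\colon T\to T$. The plan is to take as the pseudo-units of~\eqref{Dnes jsem posledni den v Mexiku.} not the naive candidate $e_t:=\eta_T(\star)$, but the \emph{rigidified} one
\[
e_t\ :=\ \eta_{U_T}(\star)\ \in\ \oS(U_{U_T})=\oS(U_T),\qquad T\in\ttO,\ t\in\oS(T),
\]
which makes sense because the fiber axioms force $U_{U_T}=U_T$ (an instance of~\eqref{Po navratu ze Zimni skoly.}: the map induced on fibers by $\id\colon T\to T$ is $\id_{U_T}$, so $U_T=\Fib(\id_T)=\Fib(\id_{U_T})=U_{U_T}$). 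Note that this $e_t$ depends only on the object $U_T$, not on $T$ or on $t$; already here I expect the naive candidate to be insufficient, as left unitality of $\oS$ by itself does not force $\eta_T(\star)=\eta_R(\star)$ when $U_T=U_R$ --- in contrast with the two-sided phenomenon of Proposition~\ref{Poleti to zitra?}.

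The heart of the argument is a \emph{rigidification lemma}: if $\oS$ is left unital in the sense of Definition~\ref{O vikendu ma byt krasne a ja trcim v Mexiku.}, then for each $T$ the element $\eta_{U_T}(\star)$ is an honest left unit, i.e.\ $\gamma_{\id_T}\bigl(\eta_{U_T}(\star),t\bigr)=t$ for all $t\in\oS(T)$. I would prove this by applying the associativity axiom~\eqref{Uz se prohlidky blizi.} to the composable pair $T\xrightarrow{\id}T\xrightarrow{\id}T$: here every fiber in sight equals $U_T$ (using $U_{U_T}=U_T$) and the induced map $f_C$ is $\id_{U_T}$, so the associativity square reads, on elements,
\[
\gamma_{\id_T}\bigl(a,\gamma_{\id_T}(b,t)\bigr)\ =\ \gamma_{\id_T}\bigl(\gamma_{\id_{U_T}}(a,b),t\bigr),\qquad a,b\in\oS(U_T),\ t\in\oS(T).
\]
Taking $a:=\eta_{U_T}(\star)$ and $b:=\eta_T(\star)$, left unitality of $\oS$ at $T$ collapses the inner factor on the left-hand side to $\gamma_{\id_T}(\eta_T(\star),t)=t$, and left unitality of $\oS$ at $U_T$ collapses $\gamma_{\id_{U_T}}\bigl(\eta_{U_T}(\star),\eta_T(\star)\bigr)=\eta_T(\star)$ on the right-hand side; comparing the two sides gives $\gamma_{\id_T}(\eta_{U_T}(\star),t)=\gamma_{\id_T}(\eta_T(\star),t)=t$.

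Granting the lemma, left pseudo-unitality follows with little effort. The identity $\gamma_{\id}(e_t,t)=t$ demanded by Definition~\ref{Uz se tech prohlidek bojim.} is precisely the lemma. For the other identity, consider the diagram~\eqref{Jak to dopadne s tou moji aurou?} associated with a morphism $\xi\colon T\to C$ of fiber $R$: the fiber axioms identify $U_T$ and $U_R$ as the \emph{same} object of $\ttO$, hence, since $\gamma_\xi(\rho,c)\in\oS(T)$ and our pseudo-units depend only on that object, $e_{\gamma_\xi(\rho,c)}=\eta_{U_T}(\star)=\eta_{U_R}(\star)$, while $\gamma_{\id_C}$ is $\gamma_{\id_R}$; therefore $\gamma_{\id_C}\bigl(e_{\gamma_\xi(\rho,c)},\rho\bigr)=\gamma_{\id_R}\bigl(\eta_{U_R}(\star),\rho\bigr)=\rho$ by the lemma applied at $R$. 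The right-unital case is dual: one again puts $e_t:=\eta_{U_T}(\star)$ and proves the matching right rigidification statement --- that $\eta_{U_T}(\star)$ is a right unit for every structure map $\gamma_{f_T}$ arising from an induced map $f_T\colon F\to U_T$ as in~\eqref{Asi jsem jednal nevhodne.} --- by feeding $F\xrightarrow{f_T}U_T\xrightarrow{\id}U_T$ into~\eqref{Uz se prohlidky blizi.} and invoking right unitality of $\oS$ at $U_T$, then concludes as above. The real obstacle, I expect, is twofold: conceptually, realising that the correct pseudo-unit is $\eta_{U_T}(\star)$ rather than $\eta_T(\star)$, and that establishing it requires the associativity-plus-double-unitality manoeuvre above; and technically, the bookkeeping with the fiber-functor axioms needed to see $U_{U_T}=U_T$, to see $U_T=U_R$, and to check that the maps induced in the relevant associativity diagrams are the identities claimed.
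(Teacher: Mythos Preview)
Your argument is correct, and in fact more careful than the paper's. The paper simply sets $e_t:=\eta_T(\star)$ for $t\in\oS(T)$ and declares the remaining verification ``simple.'' But exactly the issue you flag arises in the second clause of left pseudo-unitality: with that choice one must show $\gamma_{\id_R}\bigl(\eta_T(\star),\rho\bigr)=\rho$, whereas left unitality only supplies $\gamma_{\id_R}\bigl(\eta_R(\star),\rho\bigr)=\rho$; the equality $U_T=U_R$ does not force $\eta_T(\star)=\eta_R(\star)$ since the family~\eqref{Nejdriv si musim prinytovat kolo.} is indexed by $T$, not by $U_T$. Your rigidified choice $e_t:=\eta_{U_T}(\star)$ and the rigidification lemma (via associativity for $T\xrightarrow{\id}T\xrightarrow{\id}T$) close this gap cleanly; your deduction of both left pseudo-unitality clauses from the lemma is correct.

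One adjustment in the right-unital case: the associativity square for $F\xrightarrow{f_T}U_T\xrightarrow{\id}U_T$ does not by itself deliver $\gamma_{f_T}\bigl(\varphi,\eta_{U_T}(\star)\bigr)=\varphi$. A direct route works instead. Apply right unitality to the morphism $f_T\colon F\to U_T$ itself (target $U_T$, fiber $F$): this gives $\gamma_{(f_T)_{U_T}}\bigl(\varphi,\eta_{U_T}(\star)\bigr)=\varphi$. Now the fiber-functor axiom~\eqref{Prodlouzi mi papiry?} with $c=\id_T$ reads $\Fib_{U_T}\circ\Fib_{\id_T}=\Fib_T$; evaluating both sides on the morphism $f\colon(f)\to(\id_T)$ in $\ttO/T$ yields $(f_T)_{U_T}=f_T$. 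Hence $\gamma_{f_T}\bigl(\varphi,\eta_{U_T}(\star)\bigr)=\varphi$, and since for $t\in\oS(U_T)$ your $e_t=\eta_{U_{U_T}}(\star)=\eta_{U_T}(\star)$, right pseudo-unitality follows.
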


\begin{proof}
The monoidal unit of the category $\Set$ is the one-point set $\{\star\}$. The
family~(\ref{Nejdriv si musim prinytovat kolo.}) determines a family
as in~(\ref{Dnes jsem posledni den v Mexiku.}) by
$e_t := \eta_T(\star)$. Notice that this $e_t$ depends only on $U_T$,
not on a concrete $t \in \oS(U_T)$. It is simple to verify that if
$\oS$ is left (resp.~right) unital in the sense of  Definition~\ref{O
  vikendu ma byt krasne a ja trcim v Mexiku.}, then 
$\{e_t\}$ are    left (resp.~right) pseudo-units of $\oS$.
\end{proof}

\begin{example}
\label{Psano u Kotsbere.}
Let $\ttO$ be an unital unary operadic category. The operad
$\emptyset$ with $\emptyset(T)$ the empty set for each $T \in \ttO$, is a
pseudo-unital operad, which is however not unital.
This shows that pseudo-unitality 
is less demanding than unitality even when $\ttO$ is unital. 
Below is a less trivial example.
\end{example}

\begin{example}
\label{Asi mam auru ne li neco horsiho.}
Let $\bod$ be the terminal unary unital 
operadic category, i.e.\ the category
with one object $\odot$ and one morphism $\id : \odot \to \odot$ with
fiber $\odot$, which is simultaneously the unique chosen terminal object.
Nonunital $\bod$-operads in $\Set$ are non-unital monoids, i.e.\ sets with one
binary associative operation.
Unital $\bod$-operads are unital monoids, and pseudo-unital
$\bod$-operads are pseudo-unital monoids introduced in
Example~\ref{Mam prodlouzene ARC pro Tereje.}. 
\end{example}

\section{Discrete operadic fibrations}
\label{Cuka mi v oku.}

Discrete operadic fibrations appearing as operadic Grothendieck 
constructions~\cite[Section~2]{duodel} are strong, useful tools for
constructing new operadic categories from old ones, as several
examples given~\cite[Section~4]{env} convincingly show.  
In the first part of this section we recall unital versions of the
relevant definitions given in~\cite{duodel}, the second part is devoted to the
non-unital case required in Part~2 by our applications to the blob complex.

\begin{definition}
\label{psano_v_Myluzach}
  A operadic functor $p:\ttQ\to \ttO$ between unary unital
  operadic categories is  a~{\it discrete operadic
    fibration} if 
\begin{itemize}
\item[(i)]
$p$ induces an epimorphism $\pi_0(\ttQ) \twoheadrightarrow
\pi_0(\ttO)$ of the sets of connected components, and
\item[(ii)]
for any morphism $f : T\to S$ in $\ttO$ with fiber $F$ and any two objects
  $\epsilon, s \in \ttQ$ such that $p(s) = S \mbox { and } p(\epsilon) = F$,
  there exists a unique morphism $\sigma : t\to s$ in $\ttQ$ with fiber
  $\epsilon$ such that $p(\sigma) = f$. Schematically,
\begin{equation}
\label{bliska se mi}
\xymatrix@C=0em{\ \ \ttQ:\ar[d]_p   &\epsilon 
\ar@{|~>}[d]
& \fib& t \ar@{|~>}[d]  \ar@{-->}[rrrrr]_{!}^{\sigma}     &&&&& s\ar@{|~>}[d]
\\
\ \ \ttO :& F & \fib& T   \ar[rrrrr]^f     &&&&& \ S.
}
\end{equation}
\end{itemize}
\end{definition}

The unary version of the operadic
{\em Grothendieck construction\/} \cite[page~1647]{duodel}
associates to a unital $\Set$-valued operad $\oS$ over a unary
unital operadic category~$\ttO$, cf.~Definition~\ref{9 dni v Mexiku}, 
a unary unital operadic category $\ttP$ together with an
operadic functor  $p: \ttP \to \ttO$ as follows. 
Objects of $\ttP$ are elements \hbox{$t \in \oS(T)$} 
for some $T \in
\ttO$. Given $s \in \oS(S)$ and $t \in \oS(T)$, 
a morphism $\sigma: s \to t$  in $\ttP$ is a pair $(\epsilon,f)$
consisting of a morphism $f : S \to T$
in $\ttO$ and an element $\epsilon \in \oS(F)$, where $F$ is the fiber
of $f$, such that $\gamma_{\!f}(\epsilon,t) = s$. The fiber of 
a morphism $\sigma: s \to t$ of this form
is $\epsilon \in \oS(F)$. The unit automorphism
$\id_t: t \to t$ of $t\in \oS(T)$ in $\ttP$ is the pair $(e_c,\id_T)$, where 
$e_c := \eta_{c}(\star) \in \oS(U_c)$,  $U_c$ is the fiber of
the identity automorphism $T \to T$ and $\star$ is the only element of
the monoidal unit  $\{\star\}$ of the category $\Set$. The chosen
local terminal objects of $\ttP$ 
are $\big\{e_c \in \oS(U_c) \ | \ c \in \pi_0(\ttO)\big\}$. 

\label{Budu letat jeste pristi rok? Snad ano.}
The categorical composition in $\ttP$ is given as follows.
Assume that $a \in \oS(A)$, $b \in \oS(B)$ and $c \in \oS(C)$ are objects of
$\ttP$, and $\phi :a \to b$, resp.~$\psi :b \to c$ their morphisms given
by pairs $(\omega,f)$, resp.~$(y,g)$, where $f: A \to B$, resp.~$g:
B\to C$ are morphisms 
of $\ttO$ with the fibers $F$ resp.~$Y$, and $\omega \in \oS(F)$ resp.~$y \in
\oS(Y)$ are
such that 
\[
a = \gamma_f(\omega,b)  \ \hbox { and } \  b =\gamma_g(y,c).
\]
The composite  $\psi\phi$
in  $\ttP$ is defined to be 
the pair  $(x,gf)$, where $x :=  \gamma_{f_C}(\omega,y)$ and $f_C$ is
as in diagram~(\ref{Prohlidka 30.brezna 2022.}).

It turns out that the functor 
$p: \ttP \to \ttO$ that sends $t \in \oS(T)$ to $T \in \ttO$ 
is a discrete operadic fibration. The
correspondence \hbox{$\oS \mapsto \ttP$} is one-to one, as claims the
following  a unary version of~\cite[Proposition~2.5]{duodel}.

\begin{proposition}
 \label{sec:discr-fibr-betw}
The Grothendieck construction provides an equivalence between the category
of unital \/ $\ttO$-operads in the monoidal category of sets, and 
the category of discrete
operadic fibrations of unital unary operadic categories over\/ $\ttO$.
\end{proposition}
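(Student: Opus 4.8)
The plan is to exhibit the two functors of the claimed equivalence and then check they are mutually inverse up to natural isomorphism. In one direction we send a unital $\Set$-valued $\ttO$-operad $\oS$ to the discrete operadic fibration $p:\ttP\to\ttO$ produced by the Grothendieck construction described in the paragraphs preceding the statement; a morphism of operads $\alpha:\oS\to\oS'$ is sent to the operadic functor $\ttP\to\ttP'$ that on objects maps $t\in\oS(T)$ to $\alpha_T(t)\in\oS'(T)$ and on morphisms maps $(\epsilon,f)$ to $(\alpha_F(\epsilon),f)$. First I would verify that this is well defined and operadic: it commutes with fibers because the fiber of $(\epsilon,f)$ is $\epsilon$, and it respects composition and chosen local terminal objects because $\alpha$ is compatible with the structure maps $\gamma$ and the units $\eta_c$. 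In the other direction, given a discrete operadic fibration $p:\ttQ\to\ttO$, define an $\ttO$-operad $\oS_p$ by $\oS_p(T):=p^{-1}(T)$ (the set of objects of $\ttQ$ over $T$), with structure map $\gamma_h:\oS_p(F)\times\oS_p(B)\to\oS_p(A)$, for $h:A\to B$ with fiber $F$, sending $(\epsilon,s)$ to the domain $t$ of the unique lift $\sigma:t\to s$ of $h$ with fiber $\epsilon$ guaranteed by condition (ii) of Definition~\ref{psano_v_Myluzach}. The unit $\eta_c:\{\star\}\to\oS_p(U_c)$ picks out the chosen local terminal object of $\ttQ$ in the component over $c$; condition (i) guarantees exactly one such object exists in each relevant component. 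On morphisms, an operadic functor $\Psi:\ttQ\to\ttQ'$ over $\ttO$ restricts on each fiber $p^{-1}(T)$ to a map $\oS_p(T)\to\oS_{p'}(T)$, and one checks this is a morphism of operads.

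Next I would check the associativity axiom~\eqref{Uz se prohlidky blizi.} for $\oS_p$ and the two unitality axioms of Definition~\ref{9 dni v Mexiku}. Associativity follows from the uniqueness clause in (ii) together with the fiber-functor axiom~\eqref{Po navratu ze Zimni skoly.}: given composable $f,g$, both composites around the square~\eqref{Uz se prohlidky blizi.} produce a lift of $gf$ with the same fiber (namely $f_C$ applied appropriately, using diagram~\eqref{Prohlidka 30.brezna 2022.}), hence they agree by uniqueness. This is where the compatibility~\eqref{Prohlid-like} of fibers — ``the fiber of a map equals the fiber of the induced map between fibers'' — does the real work, so I expect the bookkeeping of which object plays the role of $F$, $X$, $Y$ in a nested lift to be the most error-prone part. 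Left unitality of $\oS_p$ reduces to the fact that in $\ttQ$ the identity automorphism of any $t$ has, by construction of the Grothendieck fibration, fiber the chosen local terminal object, so the lift of $\id:T\to T$ with that fiber is precisely $\id_t$; right unitality similarly uses that the unique lift of $!:F\to U_c$ with fiber $s$ is $s\to e_c$, an iso whose action via $\gamma$ is the identity by the right-unitality axiom on $\ttQ$.

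Finally I would verify the two round trips. Starting from $\oS$, forming $\ttP$, then forming $\oS_{p}$ returns an operad canonically isomorphic to $\oS$: objects over $T$ in $\ttP$ are by definition the elements of $\oS(T)$, and the reconstructed $\gamma$ sends $(\epsilon,s)$ to the domain of the unique $\sigma:t\to s$ with $\gamma_f(\epsilon,s)=t$ — which is $\gamma_f(\epsilon,s)$ itself, so $\gamma$ is recovered on the nose; likewise the units match. Starting from a discrete operadic fibration $p:\ttQ\to\ttO$, forming $\oS_p$ and then its Grothendieck category $\ttP_{\oS_p}$, I would build an isomorphism $\ttP_{\oS_p}\cong\ttQ$ over $\ttO$: on objects it is the identity on the disjoint union of the fibers, and on morphisms it sends the pair $(\epsilon,f)$ (with $\gamma_f(\epsilon,s)=t$ in $\oS_p$) to the unique lift $\sigma:t\to s$ of $f$ with fiber $\epsilon$; surjectivity on morphisms is exactly the existence half of (ii), injectivity the uniqueness half, and functoriality follows from the composition rule in $\ttP$ matching the composition in $\ttQ$ under this correspondence. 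Both assignments are plainly natural in the respective morphisms, and since $p$ is determined by its fibers together with the lifting data, there is nothing further to check. The main obstacle, as noted, is not conceptual but the careful matching of fibers through iterated lifts when verifying associativity; everything else is a direct unwinding of Definitions~\ref{psano_v_Myluzach} and~\ref{9 dni v Mexiku} against the explicit construction of $\ttP$.
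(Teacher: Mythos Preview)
Your approach is correct and matches the paper's own (brief) treatment: the paper states this as the unary version of \cite[Proposition~2.5]{duodel}, describes only the inverse construction $\oS(T):=p^{-1}(T)$ with units $e_c:=u_c$, and defers the verification to that reference. Your plan fills in exactly the details one would expect.

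There is one small gap worth flagging. You write that ``condition~(i) guarantees exactly one such object exists in each relevant component,'' but condition~(i) of Definition~\ref{psano_v_Myluzach} only asserts that $\pi_0(p):\pi_0(\ttQ)\to\pi_0(\ttO)$ is an \emph{epimorphism}; uniqueness of the chosen local terminal $u_c$ over $U_c$ requires that $\pi_0(p)$ be an \emph{isomorphism}. The paper explicitly invokes \cite[Lemma~2.2]{duodel} for this fact. You would need to supply the argument: given two components of $\ttQ$ mapping to the same component $c$ of $\ttO$, use the lifting property~(ii) applied to $\id:U_c\to U_c$ (whose fiber is $U_c$) to produce a morphism in $\ttQ$ between the two candidate local terminals, forcing them to lie in the same component. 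Without this step the unit $\eta_c$ of your reconstructed operad $\oS_p$ is not well defined. (Also, your reference ``\texttt{Prohlid-like}'' points to no label in the paper; presumably you mean diagram~\eqref{Po navratu ze Zimni skoly.}.)
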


Given a discrete operadic 
fibration $p: \ttQ \to \ttO$, the corresponding $\Set$-operad $\oS$
has the components
\begin{equation}
\label{Jarka jede na chalupu sama, snad to zvladne.}
\oS(T) := \{t \in \ttQ \ | \ p(t) = T\}, \ T \in \ttO.
\end{equation}
Any discrete operadic fibration induces and isomorphism $\pi_0(\ttQ)
\stackrel \cong\to \pi_0(\ttO)$ by~\cite[Lemma~2.2]{duodel}. For each
chosen local terminal $U_c \in\ttO$
therefore exists precisely one
chosen local terminal $u_c \in \ttQ$ with $p(u_c) = U_c$. 
The units of $\oS$ are then defined as $e_c := u_c
\in \oS(U_c)$, $c \in \pi_0(\ttO)$.
 
Let us proceed to the non-unital situation. The
modification of discrete operadic fibrations is straightforward:

\begin{definition}
\label{Mam nove ARC}
An operadic functor $p:\ttQ\to \ttO$ between unary, not necessary unital,  
operadic categories is a {\it discrete operadic
fibration} if it has the lifting property in item~(ii) of
Definition~\ref{psano_v_Myluzach}.
\end{definition}

The {\em non-unital\/} version of the
Grothendieck construction has as its input a
pseudo-unital $\Set$-valued operad $\oS$ 
as in~Definition~\ref{Uz se tech prohlidek bojim.}.
The objects of the resulting non-unital operadic category $\ttP$
are the same as in the unital case, and also the morphisms and their
compositions are defined as before. The unit automorphism
$\id_t: t \to t$ of $t\in \oS(T)$ in $\ttP$ is however now 
the pair $(e_t,\id_T)$, where $e_t\in \oS(U_T)$ is as 
in~\eqref{Dnes jsem posledni den v Mexiku.}.

\begin{proposition}
\label{Mam auru?}
The above version of the Grothendieck construction  is 
an equivalence between the category
of pseudo-unital
$\ttO$-operads in $\Set$ and the category of discrete
operadic fibrations over an
unary operadic category \/ $\ttO$.
\end{proposition}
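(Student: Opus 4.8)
The plan is to mirror the proof of the unital statement, Proposition~\ref{sec:discr-fibr-betw}, replacing throughout ``chosen local terminal object $U_c$'' by ``the fiber $U_T$ of $\id:T\to T$'' and ``unit $e_c$'' by ``pseudo-unit $e_t$'' as in~\eqref{Dnes jsem posledni den v Mexiku.}. Concretely, I would exhibit functors in both directions between the category of pseudo-unital $\ttO$-operads in $\Set$ and the category of discrete operadic fibrations over $\ttO$ (in the sense of Definition~\ref{Mam nove ARC}, with morphisms the operadic functors commuting with the projections to $\ttO$), and then check they are mutually quasi-inverse. As in the unital case I expect the correspondence on objects to be a genuine bijection, so the asserted equivalence will in fact be an isomorphism of categories over~$\ttO$.

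\emph{Forward direction.} Given a pseudo-unital $\oS$, I would first verify that $\ttP$, as described before the statement, is a bona fide non-unital unary operadic category: its fiber functors send a morphism $\sigma=(\epsilon,f):s\to t$ to $\epsilon\in\oS(F)$ regarded as a morphism over $\Fib_{p(t)}(f)$, and the fiber-functor axioms of Definition~\ref{Zacina mne zase svedit i leva ruka?} for $\ttP$ follow from those for $\ttO$ together with the associativity~\eqref{Uz se prohlidky blizi.} of $\oS$ and the fiber instance~\eqref{Prohlidka 30.brezna 2022.}; associativity of categorical composition in $\ttP$ is again~\eqref{Uz se prohlidky blizi.}. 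That $p:\ttP\to\ttO$ is a discrete operadic fibration is then immediate: the unique lift of $f:T\to S$ through $s\in\oS(S)$ with prescribed fiber $\epsilon\in\oS(F)$ is $(\epsilon,f):\gamma_f(\epsilon,s)\to s$, which establishes item~(ii) of Definition~\ref{psano_v_Myluzach}. The one genuinely new point is that the pairs $(e_t,\id_T)$ really are identity automorphisms of $t$ in $\ttP$: that $(e_t,\id_T)$ is a morphism $t\to t$ is exactly the first left-pseudo-unitality equation $\gamma_{\id}(e_t,t)=t$; that it is a left unit for composition unwinds precisely to the second left-pseudo-unitality equation $\gamma_{\id_C}(e_{\gamma_\xi(\rho,c)},\rho)=\rho$ attached to diagram~\eqref{Jak to dopadne s tou moji aurou?}; and that it is a right unit unwinds to the right-pseudo-unitality equation $\gamma_{f_T}(\varphi,e_t)=\varphi$ attached to diagram~\eqref{Asi jsem jednal nevhodne.}. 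Thus Definition~\ref{Uz se tech prohlidek bojim.} is exactly — and only — what is needed at this step.

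\emph{Backward direction and mutual inverseness.} Given a discrete operadic fibration $p:\ttQ\to\ttO$, set $\oS(T):=p^{-1}(T)$ as in~\eqref{Jarka jede na chalupu sama, snad to zvladne.}, with $\gamma_h(\epsilon,t)$ defined as the domain of the unique lift of $h$ through $t$ with fiber $\epsilon$ provided by item~(ii); associativity of $\gamma$ is forced by uniqueness of lifts and the fiber-functor axioms of $\ttO$. For the pseudo-units, observe that the identity automorphism $\id_t:t\to t$ in $\ttQ$ has a fiber $e_t\in\ttQ$, and since $p$ is an operadic functor it commutes with fiber functors, so $p(e_t)=\Fib_{p(t)}(\id)=U_{p(t)}$, i.e.\ $e_t\in\oS(U_T)$ as required in~\eqref{Dnes jsem posledni den v Mexiku.}. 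Applying $p$ to the relations $\id_t\circ\id_t=\id_t$ and $\sigma\circ\id=\sigma=\id\circ\sigma$ in $\ttQ$, and invoking uniqueness of lifts, yields the left and right pseudo-unitality equations; note that, unlike in the unital case, $e_t$ may genuinely depend on $t$ and not merely on $U_T$. Finally one checks the two round trips: starting from $\oS$, the operad read off $\ttP$ is $\oS$ on the nose, since the lift of $f$ through $t$ with fiber $\epsilon$ is by construction $(\epsilon,f)$ and the fiber of $(e_t,\id_T)$ is $e_t$; starting from $p:\ttQ\to\ttO$, the comparison functor $\ttQ\to\ttP$ sending $t\mapsto t\in\oS(p(t))$ and $\sigma\mapsto(\Fib(\sigma),p(\sigma))$ is an isomorphism of operadic categories over $\ttO$, being tautologically bijective on objects and bijective on morphisms by the lifting property. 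Functoriality of both assignments on morphisms is routine. I expect the principal obstacle to be the bookkeeping in this backward direction — matching the nested-identity configurations of diagrams~\eqref{Jak to dopadne s tou moji aurou?} and~\eqref{Asi jsem jednal nevhodne.} against the corresponding identity and composition relations in $\ttQ$ — while everything else is a faithful transcription of the unital argument.
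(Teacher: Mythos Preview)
Your proposal is correct and follows exactly the approach of the paper: the backward construction takes $\oS(T):=p^{-1}(T)$ with pseudo-unit $e_t$ defined as the fiber of the identity automorphism $\id_t:t\to t$ in $\ttQ$, and then one checks the two constructions are mutually inverse. The paper's own proof is terse---it simply names this backward recipe and declares the verification straightforward---so your expanded account, in particular the observation that the three pseudo-unitality equations of Definition~\ref{Uz se tech prohlidek bojim.} correspond precisely to $(e_t,\id_T)$ being an endomorphism of $t$ and a two-sided unit for composition in $\ttP$, supplies exactly the details the paper omits.
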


\begin{proof}
Given a discrete operadic
fibration $p: \ttQ \to \ttO$, the corresponding $\Set$-operad $\oS$
has the components as in~(\ref{Jarka jede na chalupu sama, snad to
  zvladne.}). 
The pseudo-unit $e_t \in \oS(U_T)$ associated to  $t \in \oS(T)$ 
is, by definition, the fiber
of the identity automorphism $t \to t$ in $\ttQ$. To verify that this
recipe is the inverse of the Grothendieck construction is straightforward. 
\end{proof}

\begin{example}
\label{Je druhy neletovy den na Safari}
Let $\ttO$ be an unital operadic category.
The Grothendieck construction $\int_\ttO{\bf 1}_\ttO$ of the terminal
unital $\ttO$-operad in $\Set$ is isomorphic to $\ttO$. The Grothendieck
construction $\int \emptyset$ of the `empty' pseudo-unital operad from
Example~\ref{Psano u Kotsbere.} gives the discrete operadic fibration
$\hbox{${\not \hskip -.1em \ttO}$} \to \ttO$ of non-unital operadic
categories, where \hbox{${\not
    \hskip -.1em \ttO}$} is the trivial operadic category (no objects). 
\end{example}

\begin{exercise}
\label{Sedim v lese.}
Let $\bod$ be the terminal unital operadic category in
Example~\ref{Asi mam auru ne li neco horsiho.}. Verify that the
operadic category $\CA$ discussed in Example~\ref{Dnes je tady
  chladneji.} resp.~\ref{Mam prodlouzene ARC pro Tereje.} equals the
Grothendieck constructions $\int_\odot A$, with $A$ interpreted as an
unital, resp.~pseudo-unital $\bod$-operad, cf.~Example~\ref{Asi mam
  auru ne li neco horsiho.}. 
Then show that there are one-to-one correspondences between\hfill\break 
\hglue 1em -- 
unital associative monoids,\hfill\break 
\hglue 1em -- 
unital $\bod$-operads, and\hfill\break 
\hglue 1em -- 
discrete operadic fibrations of unital unary operadic categories over $\bod$.\hfill\break 
Likewise, there are one-to-one correspondences between \hfill\break  
\hglue 1em -- 
pseudo-unital associative monoids,\hfill\break 
\hglue 1em -- 
pseudo-unital $\bod$-operads, and \hfill\break 
\hglue 1em -- 
discrete operadic fibrations of
operadic categories over $\bod$. \hfill\break 
In particular, the chaotic groupoid generated by $X$ is the
Grothendieck construction $\int_\odot X$ of the pseudo-unital monoid
$X$ discussed in Example~\ref{Vecer mi to bliskalo.}, with the fibers
given by the domain functor.
\end{exercise}

\section{Partial operads, partial fibrations}
\label{Podari se mi premluvit Jarku abych mohl jet do Prahy uz dnes?}

The Grothendieck construction used in~\eqref{Divna doba.} of Part~2 to decorate
blobs by fields on their boundaries  uses a
pseudo-unital operad $\oS$ whose structure operations are only
partially defined. This requires further generalization of the
material of Section~\ref{Cuka mi v oku.}. 
Namely, we formulate a~`partial' version of Proposition~\ref{Mam
  auru?} tailored for the context of
Proposition~\ref{Predevcirem jsem se nechal ockovat proti
  chripce.} in Part~2. 

\begin{definition}
\label{Uz se tech prohlidek bojim-partial.}
A partial $\ttO$-operad is a collection  of sets  $\oS = \{\oS(A)\}_{A \in
  \ttO}$ with structure operations
\[
\gamma_h: {\EuScript D}(h) \longrightarrow \oS(A), \ h: A \to B
\hbox { a morphism of $\ttO$ with fiber } F\, ,
\]
defined on a subset ${\EuScript D}(h) \subset\oS(F) \times \oS(B)$. The
domains  $\big\{{\EuScript D}(h)\big\}_h$ are such that, for each
diagram as in~(\ref{Prohlidka 30.brezna 2022.}),
\begin{equation}
\label{Poletam v patek?}
\big(\oS(F) \times \gamma_g({\EuScript D}(g))\big) \cap {
\EuScript D}(f)
= \big(\gamma_{f_C}({\EuScript D}(f_C)) \times \oS(C)\big) \cap {\EuScript D}(gf)
\end{equation}
and $\gamma_f(\id \times \gamma_g)  =  \gamma_{gf}(\gamma_{f_C} \times
\id)$ on the set in~(\ref{Poletam v patek?}). 
\end{definition}

Equation~(\ref{Poletam v patek?}) means that the composites  
$\gamma_f(\id \times \gamma_g)$ and
$\gamma_{gf}(\gamma_{f_C} \times \id)$  are defined on the same subset of
$\oS(F) \times \oS(Y) \times \oS(C)$.

\begin{definition}
\label{Uz se tech prohlidek bojim - bliska se mi.}
Let $\oS$ be a partial $\ttO$-operad as in Definition~\ref{Uz se tech
  prohlidek bojim-partial.} equipped with a family of elements 
\begin{equation}
\label{Dnes jsem posledni den v Mexiku - uz v Praze.}
\big\{
e_t
\in \oS(U_T) \ | \ T \in \ttO, \ t \in \oS(T),\ 
\hbox {$U_T$ is the fiber of $\id : T \to T$}
\big\}.
\end{equation}
We say that~$\oS$ is {\em left pseudo-unital \/} if, 
for any $T \in \ttO$ and $t \in \oS(T)$, 
$\gamma_{\id} (e_t,t)$ is defined and equals $t$. We moreover require
that, for any diagram as in~(\ref{Jak to dopadne s tou moji aurou?})
and elements $\rho \in \oS(R)$, $c\in
\oS(C)$ for which $\gamma_{\xi}(\rho,c)$ is defined,
$\gamma_{\id_C}(e_{\gamma_{\xi}(\rho,c)},\rho)$ is defined and equals $\rho$. 

We say that $\oS$ is {\em right pseudo-unital \/} if,
in the situation of diagram~\eqref{Asi jsem jednal nevhodne.},
$\gamma_{f_T}(\varphi,e_t)$ is defined for any $t \in \oS(U_T)$ and
$\varphi \in \oS(F)$, and equals $\varphi$.
Finally, $\oS$ is {\em pseudo-unital\/} if it is both left and
right pseudo-unital.
\end{definition}

\begin{example}
Partial pseudo-unital operads over the terminal unital operadic
category $\bod$ are partial pseudo-unital monoids. We define them
as partial
associative monoids $A$ equipped with a family $\{e_b \in A\ | \ b \in A\}$ such that 
the product $z\, e_t$ is defined for each $z,t \in A$ and equals $z$ and, if
$tb$ is defined, then $e_{tb} \,t$ is defined and equals $t$, for each
$t,b \in A$. Such partial pseudo-unital monoids are, of course,
partial versions of
pseudo-unital monoids introduced in Example~\ref{Mam prodlouzene ARC pro Tereje.}.
\end{example}

The Grothendieck construction recalled in Section~\ref{Cuka
  mi v oku.} works even when $\oS$ is only a partial
unital $\Set$-valued operad. The objects of the modified 
category $\ttP$ are elements $t \in \oS(T)$, $T \in \ttO$, as before, but the
pair  $(\epsilon,f)$ with $f :S \to T$, $\epsilon \in \oS(F)$ and $F$ the
fiber of $f$, is a morphism $s \to t$ in $\ttP$ 
only if  $\gamma_{\! f}(\epsilon,t)$ is
defined (and equals $s$).

Let us verify that~\eqref{Poletam v patek?} 
guarantees that the categorical composition is defined
for all pairs of morphisms of $\ttP$ whose targets and domains match
as usual.
Assume that $\phi :a \to b$ and $\psi :b \to c$ are as in the
paragraph on page~\pageref{Budu letat jeste pristi rok? Snad ano.},
Section~\ref{Cuka mi v oku.},
where the composition in $\ttP$ is described. Since 
$\gamma_g(y,c)$ is defined and equals $b$, and  $\gamma_f(\omega,b)$
is also defined, the composite $\gamma_f(\omega,\gamma_g(y,c))$
is defined and, thus,  $\gamma_{gf}(\gamma_{f_C}(\omega,y),c)$ is
defined by~\eqref{Poletam v patek?}. In particular, 
$\gamma_{f_C}(\omega,y)$ must be defined, and we define the composite
$\psi\phi$ to be the pair $(x,gf)$, where $x :=  \gamma_{f_C}(\omega,y)$.

The unit automorphism $\id_t :t \to t$ of $t \in \ttP$ is the pair
$(e_t,t)$, where $e_t$ is as in~(\ref{Dnes jsem posledni den v
  Mexiku - uz v Praze.}); notice that $\gamma_{\id} (e_t,t)$ is defined.
The projection $\pi: \ttP \to \ttO$ of unary operadic
categories sends the object $t \in
\oS(T)$ of $\ttP$ to $T \in \ttO$. Let us formulate a `partial' version of
Definition~\ref{Mam nove ARC}.

\begin{definition}
A {\em partial discrete operadic fibration\/} is an operadic functor
$p: \ttQ \to \ttO$ between unary operadic
categories, together with a choice of subsets 
\begin{equation}
\label{Budu asi az do patku.}
\LL(f)
\subset p^{-1}(F)\! \times \! p^{-1} (S), \
\hbox { $f : T \to S$ is a morphism in~$\ttO$ with fiber $F$}.
\end{equation}
The sets $\{\LL(f)\}_f$ are
such that 
\begin{itemize}
\item [(i)]
for any $(\varepsilon,s) \in \LL(f)$ there exists a unique lift
$\sigma$ as in~(\ref{bliska se mi}),
\item [(ii)]
for any morphism $\sigma : t \to s$ in $\ttQ$ with fiber
$\varepsilon$, one has $(\varepsilon,s) \in \LL\big(p(\sigma)\big)$, and
\item [(iii)]
for any $T \in \ttO$ and $t\in p^{-1} (T)$, one has $(u_t, t) \in \LL(\id_T)$,
where $u_t$ is the fiber of the identity automorphism $\id_t : t \to t$.  
\end{itemize}
Denote the lift $\sigma$ of $(\varepsilon,s) \in \LL(f)$ in item (i) above
by $\ell(f,\varepsilon,s)$.
Consider the diagram~(\ref{Prohlidka 30.brezna 2022.}) in $\ttO$
and elements
$y \in \pinv Y$, $r \in \pinv C$ and $\varepsilon \in \pinv F$. 
We require that
\begin{equation}
\label{Je to od zubu?}
(y,c) \in \LL(g)\ \& \ \big(\varepsilon,\ell(g,y,c)\big) \in \LL(f)
\  \iff \
(\varepsilon,y) \in \LL(f_C)\ \& \ \big(\ell(f_C,\varepsilon,y), c\big)
\in \LL(gf).
\end{equation}
\end{definition}

Equivalence~(\ref{Je to od zubu?}) expresses that the lift of the
composite $gf$ exists if and only if there exist composable lifts of $f$ and $g$.
We leave the proof of the following `partial' version of Proposition~\ref{Mam auru?}
to the reader.

\begin{proposition}
\label{Mam v pokoji 18 stupnu a je pulka listopadu.}
The `partial' Grothendieck construction is an equivalence 
between the category
of partial pseudo-unital $\ttO$-operads in $\Set$ 
and the category of partial discrete
operadic fibrations over a unary operadic 
category  \/ $\ttO$.
\end{proposition}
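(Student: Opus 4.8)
The plan is to mimic the proof of the non-partial Proposition~\ref{Mam auru?} essentially word for word; the only additional work is the bookkeeping of the domains of definition of the structure operations. As there, the claimed equivalence will be exhibited by a pair of mutually inverse explicit constructions: in one direction the `partial' Grothendieck construction recalled in the paragraphs preceding the statement, and in the other direction the passage that reads a partial $\ttO$-operad off the fibres of a partial discrete operadic fibration.

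Going from a partial pseudo-unital $\ttO$-operad $\oS$ to a partial discrete operadic fibration, I would first check that the `partial' Grothendieck construction does produce a unary (non-unital) operadic category $\ttP$: its objects, morphisms and composition were already spelled out in Section~\ref{Cuka mi v oku.}, and it was verified just before the statement, using~(\ref{Poletam v patek?}), that this composition is everywhere defined; associativity of the composition and the fibre-functor axioms of Definition~\ref{Zacina mne zase svedit i leva ruka?} for $\ttP$ then follow from the partial associativity $\gamma_f(\id\times\gamma_g)=\gamma_{gf}(\gamma_{f_C}\times\id)$ of $\oS$ on the common domain together with the corresponding axioms of $\ttO$. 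Next I would make $p:\ttP\to\ttO$ into a partial discrete operadic fibration by declaring, for $f:T\to S$ in $\ttO$ with fibre $F$,
\[
\LL(f) := {\EuScript D}(f) = \big\{(\varepsilon,s)\in\pinv F\times\pinv S \ \big|\ \gamma_f(\varepsilon,s)\ \hbox{is defined}\big\} .
\]
Item~(i) of the definition of a partial discrete operadic fibration holds because, for $(\varepsilon,s)\in\LL(f)$, the pair $(\varepsilon,f)$ is by construction the unique morphism $\gamma_f(\varepsilon,s)\to s$ of $\ttP$ with fibre $\varepsilon$ over $f$; item~(ii) is immediate from the description of the morphisms of $\ttP$; and item~(iii) is precisely the clause `$\gamma_{\id}(e_t,t)$ is defined' of left pseudo-unitality. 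Finally, with the lift $\ell(f,\varepsilon,s)$ being the morphism $\gamma_f(\varepsilon,s)\to s$ just mentioned, the required equivalence~(\ref{Je to od zubu?}) becomes exactly the set equality~(\ref{Poletam v patek?}), which holds by assumption on $\oS$.

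In the opposite direction, given a partial discrete operadic fibration $p:\ttQ\to\ttO$ I would set $\oS(T):=\pinv T$ as in~(\ref{Jarka jede na chalupu sama, snad to zvladne.}), take ${\EuScript D}(h):=\LL(h)$, let $\gamma_h(\varepsilon,s)$ be the domain of the unique lift $\ell(h,\varepsilon,s)$, and let the pseudo-unit $e_t\in\oS(U_T)$ be the fibre $u_t$ of the identity automorphism $\id_t:t\to t$ (item~(iii) ensures $(u_t,t)\in\LL(\id_T)$, so this is legitimate, and functoriality of the fibres of $\ttQ$ gives $\gamma_{\id}(e_t,t)=t$). The remaining identities of Definition~\ref{Uz se tech prohlidek bojim - bliska se mi.} would be obtained, exactly as in the non-partial case, by applying the fibre-functor axioms of $\ttQ$ to the instances of diagrams~(\ref{Jak to dopadne s tou moji aurou?}) and~(\ref{Asi jsem jednal nevhodne.}) that come up; and the partial-operad axiom~(\ref{Poletam v patek?}) for this $\oS$ is~(\ref{Je to od zubu?}) read through the dictionary $\gamma\leftrightarrow\ell$, ${\EuScript D}\leftrightarrow\LL$, combined with the uniqueness of lifts. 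One then checks that the two constructions are mutually inverse — both round trips are the identity on objects by construction, and on morphisms one invokes the uniqueness clause in item~(i), just as in the proof of Proposition~\ref{Mam auru?} — and that both are functorial, which is a routine diagram chase.

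The step I expect to be the only real obstacle is the very bookkeeping just alluded to: one must be careful that the set equality~(\ref{Poletam v patek?}) and the logical equivalence~(\ref{Je to od zubu?}) really do say the same thing once the translation $\gamma\leftrightarrow\ell$, ${\EuScript D}\leftrightarrow\LL$ is installed, and that at no point a lift is silently assumed to exist while the corresponding composite of structure operations is undefined (or vice versa). Apart from this, the argument would be identical to the total case already treated in Proposition~\ref{Mam auru?}.
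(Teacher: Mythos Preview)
Your proposal is correct and is precisely the argument the paper has in mind: the paper's own ``proof'' of this proposition is simply the sentence that it is left to the reader as the partial version of Proposition~\ref{Mam auru?}, so your plan of transporting that proof while tracking the domains ${\EuScript D}(h)\leftrightarrow\LL(h)$ and matching~(\ref{Poletam v patek?}) with~(\ref{Je to od zubu?}) is exactly what is intended. Your sketch is in fact considerably more detailed than anything the paper supplies.
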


\section{{Operadic modules}}
\label{Zblaznim se z toho?}

The inputs of the classical bar resolution~\cite[Section~X.2]{Homology}
are an associative algebra $\Lambda$ and its (left) $\Lambda$-module $C$. In
Section~\ref{Snad si jeste jednou vytahnu Tereje.} we generalize
the input data to an 
operad $\oP$ and its suitably defined $\oP$-module $\oM$. Operadic modules
are the content of the present section; its floor plan is similar to
that of Section~\ref{Porad se mi bliska.}.

While $\oP$ is, as before, defined over an operadic category $\ttO$, 
$\oP$-modules live over a categorical  `module' $\ttM$ over $\ttO$.
This feature has no analog in the classical algebra. 
The word `module' in the rest of this paper might thus mean either a
categorical module over an operadic category, or a module over an operad. We
believe that the concrete meaning will always be clear from the~context.

\begin{definition}
Let $\ttC$ be a category. A categorical (left) {\em module\/} $\ttL$ {\em over\/}
$\ttC$, or simply a {\em left $\ttC$-module\/},
consists of a set of  objects  $\ttL_0$  and, for each such $L\in \ttL_0$ and each
object $S$ of $\ttC$, a (possibly empty) set of `arrows' $\ttL(L,S)$. 
These data are equipped  with the `actions'
\[
\ttL(L,S) \times \ttC(S,T)  \ni (\alpha , g) 
\longmapsto g\alpha   \in \ttL(L,T), \ L \in \ttL_0,\ S \in \ttC,
\]
which are associative, i.e.\ $(fg)\alpha = f(g\alpha)$ for $\alpha$ and
$g$ as above and $f \in \ttC(T,R)$, and unital, meaning that 
$\id_S \alpha = \alpha$ for each $\alpha \in \ttL(L,S)$ and the
identity automorphism $\id_S \in \ttC(S,S)$.
\end{definition}

Right $\ttC$-modules as well as
$\ttC$-bimodules can be defined analogously, but we will not need
them here.

\begin{remark} 
The notion of a categorical left $\ttC$-module admits the following
categorical and the related simplicial interpretations. Let $\ttL$ be a
categorical left $\ttC$-module. For each $L\in \ttL_0$ consider the
category $L/\ttC$ with object the arrows $\alpha: L\to T$ in
$\ttL(L,T)$ and morphisms the `commutative' triangles
\[
\xymatrix{&L \ar[ld]_\alpha \ar[rd]^\beta&
\\
T&&S\ar[ll]_f
}
\] 
where $\alpha,\beta$ are arrows from $\ttL$ and $f$ is a morphism in
$\ttC.$ Let $\sfD^\ttC(\ttL) = \coprod_{L\in \ttL_0} L/\ttC.$
Taking the target provides us with a functor
$d_0: \sfD^\ttC(L) \to \ttC$ with the lifting property of a discrete Grothendieck
opfibration. We also have the
source-map $d_1:\sfD^\ttC(L) \to \ttL_0$ which obviously factorizes as
\[
\sfD^\ttC(L)\xrightarrow{\pi_0} \pi_0(\sfD^\ttC(L))\xrightarrow{s}
\ttL_0.
\]

Conversely, let ${\sfL}$ be a category equipped with a map of sets
$\gamma:\pi_0(\sfL)\to L_0$ and a discrete opfibration
$t:{\sfL}\to \ttC$. We claim that these data determine a
categorical left $\ttC$-module $\ttL$ with
\[
\ttL_0 = L_0 \ , \ \sfD^{\ttC}(\ttL) \cong \sfL,\  d_0
=t \ \hbox { and } \ d_1 = \gamma\pi_0.
\] 
Indeed, any category is the coproduct of
its connected components, in particular
\[
\sfL = \coprod_{c\in \pi_0(\sfL)}\sfL^c = \coprod_{L\in
  L_0}\Big(\coprod_{c\in \gamma^{-1}(L)}\sfL^c\Big).
\]
Now for $L\in L_0$ and $S\in \ttC$ we define the set of arrow $\ttL(L,S)$ as the
set of objects of $\alpha\in \coprod_{c\in \gamma^{-1}(L)}\sfL^c$ such
that $t(\alpha) = S.$ The action of $\ttC$ is defined using the lifting property
of the opfibration $t$.

Translated to the language of simplicial sets 
this construction amounts to the following. The
simplicial nerve of the category $\sfD^\ttC(\ttL)$,
cf.~Remark~\ref{Dnes mi vypalovali bradavici.},  consists of the
sets $\ttL_n, n\ge 1$, of all possible composable chains
\[
L\stackrel{\alpha}\longrightarrow S_0\stackrel{f_0}\longrightarrow \cdots
\xrightarrow{f_{n-1}}S_{n-1}
\]
where $L\in \ttL_0$ and $S_0,\ldots,S_{n-1}\in \ttC$. The functor
$d_0$ induces a diagram of sets
\begin{equation}
\label{catmod}
\xymatrix@C=7em{
\ttL_0&\ttL_1 \ar[r]|-{s_1}      \ar[l]_{d_1} \ar[dd]^{d_0}
&
\ar@/^1em/[l]|-{d_1}  \ar@/_1em/[l]|-{d_2}  \ttL_2
\ar@/_1em/[r]|-{s_1}
\ar@/^1em/[r]|-{s_2} \ar[dd]^{d_0}
& \ttL_2   \ar@/^2em/[l]|-{d_1}  \ar[l]|-{d_2}  \ar@/_2em/[l]|-{d_3}
 \ar[dd]^{d_0} 
&\hskip -10em\cdots \ \ 
\\
\\
&\ttC_0 \ar[r]|-{s_0}     
&  \ar@/^1em/[l]|-{d_0}  \ar@/_1em/[l]|-{d_1}  \ttC_1
\ar@/_1em/[r]|-{s_0}
\ar@/^1em/[r]|-{s_1}
& \ttC_2   \ar@/^2em/[l]|-{d_0}  \ar[l]|-{d_1}  \ar@/_2em/[l]|-{d_2} 
&\hskip -10em\cdots \ \ .
}
\end{equation}
In this diagram all usual simplicial identities hold, the bottom and the
shifted top simplicial sets satisfy Segal conditions and, moreover,
all (commutative) diagrams involving top horizontal face operators
\[
\xymatrix@C=4em{
\ttL_n  \ar@{<-}[r]^{d_{n+1}}   \ar[d]^{d_0} & \ttL_{n+1} \ar[d]^{d_0}
\\
\ttC_{n-1}  \ar@{<-}[r]^{d_{n}}  & \ttC_{n}
}
\]
are pullbacks. Conversely, any diagram with the above properties
is the `nerve' of a categorical left module.
\end{remark}

\begin{remark}
The rule $(\alpha , g) \mapsto g\alpha$ does not look as a left
action, one would expect $(\alpha , g) \mapsto \alpha g$
instead. This unpleasing feature is due to the bad but favored
convention of writing `$\alpha$ followed by~$g$' as $g\alpha$.
\end{remark}

\begin{example}
  \label{Bojim se.}
Given a category $\ttC$ and a set $S$, one has the chaotic
  $\ttC$-module $\Cha(S,\ttC)$  
with exactly one arrow $L \to T$ for every $L \in
S$ and $T \in \ttC$. A concrete example will be given in
Section~\ref{Boli mne prava noha.}.   
\end{example}

\begin{example}
If both $\ttC$
and $\ttL$ have just one object, the resulting structure is the standard
left module over an associative unital algebra.
\end{example}

\begin{example}
\label{profunctor} 
Each category is a left module over
  itself. More generally, any functor $F:\ttD\to\ttC$ determines a
  left module $\ttL(F)$ over $\ttC$ whose set of objects are objects
  of $\ttD$ and whose set of arrows $\ttL(d,c)$ equals $\ttC(F(d),c)$,
  for $d \in \ttD$, $c \in \ttC$.
  Still more generally, any functor $F:\ttD^{\rm op}\times \ttC \to \Set$
  determines a left $\ttC$-bimodule by similar formulas.  Such
  functors are also known as profunctors, distributors or bimodules from
  $\ttD$ to $\ttC.$
\end{example}

\begin{example} 
\label{Druhy den ve Stokholmu.}
If $\ttL$ is a $\ttC$-module and $c \in \ttC$, then there exits the left
`overmodule' $\ttL/c$  over $\ttC/c$ whose objects 
are arrows $\alpha : L \to c$ in
$\ttL$. Arrows from $\alpha$  to $g: T \to c \in \ttC/c$ are
diagrams 
\[
\xymatrix@R=1.2em@C=1.2em{
L \ar[rr]^\varphi \ar[dr]_\alpha  & & T \ar[dl]^g
\\
&c&
}
\]
in which $\varphi \in \ttL(L,T)$ is such that $\alpha = g\varphi$. 
\end{example}

\begin{definition}
We denote by $\MOD$ the category whose objects are pairs $(\ttC,\ttL)$
consisting
of a category $\ttC$ and its left module $\ttL$. 
Morphisms from $(\ttC',\ttL')$ to
$(\ttC'',\ttL'')$ are pairs $(\Phi,\Psi)$ consisting of a functor
$\Phi: \ttC' \to \ttC''$ and of a rule that assigns to each object $L'$
of $\ttL'$ an object $\Psi(L')$ of $\ttL''$, and to each arrow
\hbox{$\alpha : L' \to S'$} of $\ttL'$ an arrow \hbox{$\alpha_* : \Psi(L') \to
\Phi(S')$} of $\ttL''$ such that the diagram
\[
\xymatrix@R=.5em{& \Phi(S')\ar[dd]^{f_*}
\\
\Psi(L') \ar[ur]^{\alpha_*} \ar[rd]_{(f\alpha)_*}   &
\\
& \Phi(T')
}
\] 
commutes for an arbitrary morphism $f : S' \to T'$ of the category $\ttC'$.
\end{definition}

In the following analog of Lemma~\ref{Mozna mam jen otlacenou
  ruku od operadla.}, $\ttM$ is a left module over an unary operadic 
category $\ttO$
and, for each $S \in \ttO$, $\ttM/S$ denotes the left $\ttO/S$-module
introduced in Example~\ref{Druhy den ve Stokholmu.}.

\begin{lemma}
\label{Asi nemam jen otlacenou ruku od operadla.}
Each family $\big\{(\Fib_S,\Gib_S) : (\ttO/S,\ttM/S) \to (\ttO,\ttM) \ | \ S \in \ttO\big\}$ of
morphisms in $\MOD$ indexed by objects of\/~$\ttO$ induces a~family 
\begin{equation}
\label{Ve stokholmu je obleva.}
\big\{(\Fib_c,\Gib_c) : 
(\ttO/S,\ttM/S) \to (\ttO/\Fib_T(c),\ttM/\Fib_T(c)) \ | \  c :S \to
T\ \big\}
\end{equation} of morphisms in $\MOD$ indexed
by arrows of\/ $\ttO$, with $\Fib_c$ as in Lemma~\ref{Druhy den ve
  Stokholmu.}. 
\end{lemma}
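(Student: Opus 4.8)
The plan is to imitate the construction of $\Fib_c$ from Lemma~\ref{Mozna mam jen otlacenou ruku od operadla.}, carrying the categorical-module data along in parallel. Recall that in that lemma, given $c : S \to T$, an object $f : X \to S$ of $\ttO/S$ is viewed — via $h := cf$ — as a morphism $f : h \to c$ in $\ttO/T$, and one sets $\Fib_c(f) := \Fib_T(f)$. First I would do the analogous thing on the module side: an object $\alpha : L \to S$ of $\ttM/S$ gives, after composing with $c$, an arrow $c\alpha : L \to T$, and the triangle with sides $\alpha$, $c\alpha$, $c$ exhibits $\alpha$ as a morphism $\alpha \to c$ in the overmodule $\ttM/T$ over $\ttO/T$ (Example~\ref{Druhy den ve Stokholmu.}). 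Applying the given morphism $(\Fib_T,\Gib_T) : (\ttO/T,\ttM/T) \to (\ttO,\ttM)$ to this arrow of $\ttM/T$ yields an arrow $\Gib_T(\alpha) : \Gib_T(c\alpha) \to \Fib_T(c)$ in $\ttM$, and I define
\[
\Gib_c(\alpha) := \Gib_T(\alpha) \in \ttM\big(\Gib_T(c\alpha),\Fib_T(c)\big),
\]
regarded as an object of the overmodule $\ttM/\Fib_T(c)$. On morphisms of $\ttM/S$ — i.e.\ triangles $\varphi : \alpha \to \beta$ over $S$ — one embeds, exactly as in diagram~\eqref{Lakuji si blokady.}, into the larger diagram over $T$ obtained by postcomposition with $c$, and then applies the functor structure of $(\Fib_T,\Gib_T)$, i.e.\ the rule $\varphi \mapsto \varphi_*$ from the definition of $\MOD$, to obtain the required arrow of $\ttM/\Fib_T(c)$; this mirrors the passage from~\eqref{Lakuji si blokady.} to~\eqref{Pojedu na Zimni skolu?}.

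Next I would check the compatibility conditions that make $(\Fib_c,\Gib_c)$ a genuine morphism in $\MOD$. There are three things. (1) The category-level part $\Fib_c$ is already a functor by Lemma~\ref{Mozna mam jen otlacenou ruku od operadla.}, so nothing new is needed there. (2) One must verify that $\Gib_c$ respects composition and identities of arrows in $\ttM/S$; this follows because $\Gib_T$ already does, and postcomposition with the fixed arrow $c$ is compatible with composition in the module — concretely, if $\alpha : L \to S$ and $g : S' \to S$ in $\ttO/S$ then the $T$-level triangles stack correctly, so $\Gib_T$ applied to the composite equals the composite of the $\Gib_T$-images. (3) One must check the square in the definition of $\MOD$: for $f : a \to b$ in $\ttO/S$, the diagram with vertices $\Gib_c(\text{source})$, $\Fib_c(a)$, $\Fib_c(b)$ and edge $f_*$ commutes; this is precisely the corresponding square for $(\Fib_T,\Gib_T)$ applied to the $T$-level avatar of $f$, transported back along the identification of $\ttO/\Fib_T(c)$-data.

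The genuinely substantive step — the analog of the final sentence ``the verification that the above rules define a functor is straightforward'' in Lemma~\ref{Mozna mam jen otlacenou ruku od operadla.} — is checking that $\Gib_c$ is well defined, i.e.\ that the object and arrow assignments land where claimed and are independent of the (harmless) choices made in drawing the auxiliary diagrams; this amounts to unwinding the definition of the overmodule $\ttM/T$ and of morphisms in $\MOD$, and is bookkeeping rather than mathematics. I expect the main nuisance, not a real obstacle, to be notational: keeping straight which triangle lives over $S$, over $T$, or over $\Fib_T(c)$, and matching the module-arrow directions against the ``$g\alpha$ versus $\alpha g$'' convention flagged in the remark after the definition of categorical module. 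Since everything is obtained by postcomposing with the single fixed arrow $c$ and then applying the already-given $\MOD$-morphism $(\Fib_T,\Gib_T)$, no coherence beyond what is inherited from $(\Fib_T,\Gib_T)$ needs to be proved, and the lemma follows.
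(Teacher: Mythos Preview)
Your proposal is correct and is precisely the approach the paper intends: its entire proof reads ``Analogous to the proof of Lemma~\ref{Mozna mam jen otlacenou ruku od operadla.}.'' You have simply unpacked that analogy in detail, constructing $\Gib_c$ by postcomposing with $c$ to pass from $\ttM/S$ to $\ttM/T$ and then applying the given $\MOD$-morphism $(\Fib_T,\Gib_T)$, which is exactly the module-level mirror of the construction of $\Fib_c$.
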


\begin{proof}
Analogous to the proof of  Lemma~\ref{Mozna mam jen otlacenou
  ruku od operadla.}.  
\end{proof}

\begin{definition}
\label{Zacina mne zase svedit i leva ruka!}
Let $\ttO$ be an operadic category
with the associated family $\Fib_S$ of fiber functors, and $\ttM$ a
left $\ttO$-module.
We call a family $(\Fib_S,\Gib_S)$ in Lemma~\ref{Asi nemam jen
  otlacenou ruku od operadla.} a family of {\em fiber
  morphisms\/} if the module part of the extension~(\ref{Ve stokholmu
  je obleva.}) is such that 
\[
\xymatrix@C=1em{\ttM/S \ar[rr]^(.4){\Gib_c} \ar[rd]_{\Gib_S} &&
  \ttM/\Fib_T(c)
\ar[ld]^{\Gib_{\Fib_T(c)}}
\\
&\ttM&
}
\]
commutes for any $c:S \to T$.
\end{definition}

We use similar notation and terminology 
as for operadic categories. That is,
given an arrow \hbox{$\alpha:M \to
S$} in $\ttM$, we call $\Gib_S(\alpha)$ the {\em fiber\/} of $\alpha$ 
and denote it simply
by $\Gib(\alpha)$. The fact that $G = \Gib(\alpha)$ 
will be abbreviated by
$G \fib M \stackrel \alpha\to S$. For a diagram
\begin{equation}
\label{Zase se mi zdalo ze jsem neprosel prohlidkou.}
\xymatrix@C=1em{L\ar[dr]_{\beta}\ar[rr]^{\alpha}& &X \ar[dl]^{g}
\\
&S&
}
\end{equation}
where $\alpha : L \to X$ is an arrow in $\ttM$\,, $g: X \to S$ is a
morphism of $\ttO$ and $\beta = g\alpha$, we
denote by $\alpha_S$ the induced arrow $\Gib(\beta) \to
\Fib(g)$ between the fibers.
The module analog of diagram~(\ref{Po navratu ze Zimni skoly.})
associated to~(\ref{Zase se mi zdalo ze jsem neprosel prohlidkou.}) reads
\begin{equation}
\label{Po navratu ze Zimni skoly II.}
\xymatrix@R=0.1em@C=.8em{\hskip -1.8em G\ \fib H
  \ar[rr]^{\alpha_S}   
&&F \hskip -.5em
\\
\hskip -2em\raisebox{.7em}{\rotatebox{270}{$=$}}\hskip 1.65em\raisebox{.7em}{{\rotatebox{270}{$\fib$}}} && 
\hskip -.5em\raisebox{.7em}{{\rotatebox{270}{$\fib$}}} \hskip -.9em
\\
\hskip -1.95em G\ \fib L \ar[rr]^\alpha  \ar@/_.9em/[ddddddr]_{\beta}   &&
X \ar@/^.9em/[ddddddl]^{g} \hskip -.5em
\\&& 
\\&& 
\\&& 
\\&&
\\ &&
\\
&\ S .&
}
\end{equation}

\begin{definition}
\label{Svedeni asi neprestava.}
An {\em operadic module\/} over a not necessarily unital 
operadic category $\ttO$  
is a categorical left $\ttO$-module $\ttM$ 
equipped with
a family of fiber morphisms as per
Definition~\ref{Zacina mne zase svedit i leva ruka!}. A~{\em
  morphism\/} from an operadic module $\ttM'$ over $\ttO'$ to an operadic module
$\ttM''$ over $\ttO''$ is a morphism $(\Phi,\Psi) : (\ttO',\ttM') \to
(\ttO'',\ttM'')$ in $\MOD$ that commutes with the associated fiber morphisms.
\end{definition}

\begin{remark} 
As for operadic categories, cf.~Remark~\ref{Dovoli mi bradavice
  lyzovat?}, there exists a simplicial
interpretation of operadic modules.
The fiber functor of  a left operadic module $\ttM$ adds to a diagram
as in~(\ref{catmod}) 
additional top face operators, leading to the diagram
\[
\xymatrix@C=7em{
\ttM_0&\ttM_1 \ar[r]|-{s_1}   \ar@/_.5em/[l]|-{d_2}  
\ar@/^.5em/[l]|-{d_1} \ar[dd]^{d_0}
&
\ar@/^1em/[l]|-{d_1}  \ar@/_1em/[l]|-{d_2} \ar@/_2em/[l]|-{d_3}  \ttM_2
\ar@/_1em/[r]|-{s_1}
\ar@/^1em/[r]|-{s_2} \ar[dd]^{d_0}
& \ttM_2   \ar@/^2em/[l]|-{d_1}  \ar[l]|-{d_2}  \ar@/_2em/[l]|-{d_3}
\ar@/_3em/[l]|-{d_4}
 \ar[dd]^{d_0} 
&\hskip -10em\cdots \ \ 
\\
\\
&\ttO_0 \ar[r]|-{s_0}     
&  \ar@/^1em/[l]|-{d_0}  \ar@/_1em/[l]|-{d_1}  \ttO_1
\ar@/_1em/[r]|-{s_0}  \ar@/_2em/[l]|-{d_2}
\ar@/^1em/[r]|-{s_1}
& \ttO_2   \ar@/^2em/[l]|-{d_0}  \ar[l]|-{d_1}  \ar@/_2em/[l]|-{d_2}
 \ar@/_3em/[l]|-{d_3} 
&\hskip -10em\cdots \ \ .
}
\]
The new face operators satisfy all usual simplicial relations.
\end{remark}

\begin{example}
\label{Za chvili mam sraz s tim postdokem.}
Let $\ttA$ be a small category and $\ttB$ a left $\ttA$-module. Since
each overmodule in
the coproduct $\sfD_\ttA(\ttB):=\coprod_{c \in \ttA}  \ttB/c$ over objects
of $\ttA$ is a module over the category $\sfD(\ttA)$, cf.~Example~\ref{Druhy den ve Stokholmu.},
$\sfD_\ttA(\ttB)$ is  a left $\sfD(\ttA)$-module. With the fiber
functor assigning to an arrow $\psi : \alpha \to f$ in
$\sfD_\ttA(\ttB)$ of the form
\[
\xymatrix@C=1em{L\ar[dr]_{\alpha}\ar[rr]^{\psi}& &X \ar[dl]^{f}
\\
&c&
}
\]
the object $\psi : L \to X$ of $\sfD_\ttA(\ttB)$, the module 
$\sfD_\ttA(\ttB)$ becomes a left operadic
module over the operadic category $\sfD(\ttA)$.
It is
easy to verify that also $\Tau_\ttA(\ttB) : = \ttB \sqcup
\sfD_\ttA(\ttB)$ is a left operadic module over the tautological operadic
category $\Tau(\ttA)$ of Definition \ref{Dnes prselo a bylo tesne nad
  nulou.}. We call $\Tau_\ttA(\ttB)$ the {\em tautological\/}
$\Tau(\ttA)$-module generated by $\ttB$.
\end{example}

\begin{example} 
Let $F:\ttD\to \ttC$ be a functor and $\ttL(F)$ the left
$\ttC$-module constructed in Example~\ref{profunctor}.  Consider
  the category
\[
\sfD(F) =\coprod_{c\in \ttC} F/c.
\] 
There is a natural functor 
\[
\ttF:\sfD(F)\to \sfD(\ttC)
\] 
which sends an object
$F(d)\to c$ of $\sfD(F)$ to the same object but considered as an
object of $\sfD(\ttC).$ Then $\sfD_\ttC(\ttL(F)) = \ttL(\ttF)$ as a
left operadic $\sfD(\ttC)$-module.
\end{example}

Let us formulate a monadic description of left modules over operadic categories, 
analogous to Propositions~\ref{Musim se
  objednat.} and~\ref{Musim se objednat asi jeste dnes.}. Since it
is a straightforward generalization of the material in 
Section~\ref{Porad se mi bliska.}, we will be telegraphic.

Denote by $\mod$ the subcategory of $\MOD$ consisting 
of pairs $(\ttA,\ttB)$ with  $\ttA$ a
small category. One has the functor $\sfD_\mod : \mod \to \mod$ 
given by $\sfD_\mod(\ttA,\ttB) := 
\big(\sfD(\ttA), \sfD_\ttA(\ttB)\big)$
which turns out to be a
nonunital monad. Likewise, the functor  $\Tau_\mod : \mod \to
\mod$ given by 
 $\Tau_\mod(\ttA,\ttB) := 
\big(\Tau(\ttA), \Tau_\ttA(\ttB)\big)$ is a (unital) monad.

\begin{proposition}
Algebras for the nonunital monad $\,\sfD_\mod$, resp.~for the unital monad
$\Tau_\mod$ are pairs $(\ttO,\ttM)$, where $\ttO$ is a non-unital
unary operadic category with a small set of objects, and $\ttM$ its
left operadic module.
\end{proposition}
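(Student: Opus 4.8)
The plan is to mimic the three-step argument used for Propositions~\ref{Musim se objednat.} and~\ref{Musim se objednat asi jeste dnes.}, now carried out in the category $\mod$ of pairs rather than in $\ttCat$. First I would observe that giving a $\sfD_\mod$-algebra structure on a pair $(\ttA,\ttB) \in \mod$ is the same as giving a morphism $(\Fib,\Gib) : \sfD_\mod(\ttA,\ttB) = \big(\sfD(\ttA),\sfD_\ttA(\ttB)\big) \to (\ttA,\ttB)$ in $\mod$, i.e.\ a functor $\Fib : \sfD(\ttA) \to \ttA$ together with a compatible module part $\Gib$ on $\sfD_\ttA(\ttB)$. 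As in the proof of Proposition~\ref{Musim se objednat.}, such a functor $\Fib$ unpacks into a family $\{\Fib_S : \ttA/S \to \ttA\}_{S \in \ttA}$, and this family is a family of fiber functors in the sense of Definition~\ref{Zacina mne zase svedit i leva ruka?} precisely when $\Fib$ is a $\sfD$-algebra (associativity of the algebra structure $\leftrightarrow$ commutativity of~(\ref{Prodlouzi mi papiry?})). So the underlying category $\ttA$ of a $\sfD_\mod$-algebra is exactly a non-unital unary operadic category with a small set of objects.

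Next I would analyze the module component. A $\sfD_\mod$-algebra structure restricts, on the second coordinate, to a family $\{\Gib_S : \ttB/S \to \ttB\}$ compatible with the $\Fib_S$ in the sense that $(\Fib_S,\Gib_S)$ is a morphism in $\MOD$; Lemma~\ref{Asi nemam jen otlacenou ruku od operadla.} then canonically extends this to the arrow-indexed family~(\ref{Ve stokholmu je obleva.}). The associativity axiom of the $\sfD_\mod$-algebra, restricted to the module coordinate, says exactly that the triangle in Definition~\ref{Zacina mne zase svedit i leva ruka!} commutes for every $c : S \to T$, i.e.\ that $\{(\Fib_S,\Gib_S)\}$ is a family of fiber morphisms. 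Combined with the previous paragraph, a $\sfD_\mod$-algebra is therefore the same data as an operadic category $\ttO = \ttA$ together with a left operadic module $\ttM = \ttB$ in the sense of Definition~\ref{Svedeni asi neprestava.}; conversely every such pair gives a $\sfD_\mod$-algebra by reading these equivalences backwards. One should also check that algebra morphisms match morphisms of operadic modules, which is the pair-version of the same unpacking.

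For the $\Tau_\mod$ half I would invoke exactly the relationship between $\sfD$ and $\Tau$ explained in the remark after Proposition~\ref{Musim se objednat asi jeste dnes.}: $\Tau_\mod$ is obtained from the non-unital monad $\sfD_\mod$ by freely adjoining a monadic unit (concretely, $\Tau_\ttA(\ttB) = \ttB \sqcup \sfD_\ttA(\ttB)$ is the tautological $\Tau(\ttA)$-module of Example~\ref{Za chvili mam sraz s tim postdokem.}, built from $\sfD_\ttA(\ttB)$ in the same way $\Tau(\ttA)$ is built from $\sfD(\ttA)$). Since a non-unital monad and its free unitalization have the same algebras, $\Tau_\mod$-algebras coincide with $\sfD_\mod$-algebras, hence with the pairs $(\ttO,\ttM)$ as above. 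The only real work — and the step I expect to be the main obstacle — is the bookkeeping in the second paragraph: verifying that the module part of the monad multiplication for $\sfD_\mod$ really does encode the fiber-morphism coherence of Definition~\ref{Zacina mne zase svedit i leva ruka!} on the nose, including the correct handling of the source/target maps $d_0,d_1$ on $\sfD_\ttA(\ttB)$ and the compatibility of $\Gib_c$ with $\Fib_c$ from Lemma~\ref{Asi nemam jen otlacenou ruku od operadla.}. All of this is a mechanical but somewhat lengthy extension of Section~\ref{Porad se mi bliska.}, which is why I would present it telegraphically, exactly as the paper signals it will.
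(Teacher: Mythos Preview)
Your proposal is correct and follows exactly the line the paper indicates: the paper gives no proof at all for this proposition, merely flagging beforehand that ``it is a straightforward generalization of the material in Section~\ref{Porad se mi bliska.}'' and that the treatment will be telegraphic. Your outline is precisely that straightforward generalization---unpacking the $\sfD_\mod$-algebra structure coordinate-wise into Propositions~\ref{Musim se objednat.} and~\ref{Musim se objednat asi jeste dnes.} on the first factor and into Definition~\ref{Zacina mne zase svedit i leva ruka!} on the second, then handling $\Tau_\mod$ via the free-unitalization remark---so there is nothing to compare.
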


\begin{definition}
\label{Zitra jedu s Jarkou na chalupu.}
Let $\ttM$ be a left operadic module over an unary
operadic category $\ttO$, and $\oP$ an \hbox{$\ttO$-operad} in $\ttV$. 
A (right) {\em  $\oP$-module\/} in $\ttV$ is a collection $\oM = \{\oM(M)\}_M$
of objects of $\ttV$ indexed by objects of $\ttM$ along with the `actions'
\[
\nu = \nu_\alpha: \oM(G) \ot \oP(L) \to \oM(X)
\]
given for any arrow $\alpha : L \to S$ in $\ttM$ with fiber $G:= \Gib(\alpha)$. Moreover, for any arrow $L \stackrel
\alpha\to B$ in $\ttM$ and a morphisms $B \stackrel g\to C$ in $\ttO$, the diagram
\[
\xymatrix@C=3em{\oM(G) \ot \oP(F) \ot \oP(C)   \ar[d]_{\nu_{\alpha_C} \ot
    \id}
\ar[r]^(.56){\id \ot \gamma_g}    &\oM(G) \ot \oP(B) \ar[d]^{\nu_\alpha}
\\
\oM(X) \ot \oP(C) \ar[r]^{\nu_{g\alpha}}
&\oM(L)
}
\]
is required to commute. The symbols in that diagram are explained
by the following instance of~(\ref{Po navratu ze Zimni skoly II.}):
\[
\xymatrix@R=0.1em@C=.8em{\hskip -1.8em G\ \fib X
  \ar[rr]^{\alpha_C}   
&& F \hskip -.5em
\\
\hskip -2em\raisebox{.7em}{\rotatebox{270}{$=$}}\hskip 1.65em\raisebox{.7em}{{\rotatebox{270}{$\fib$}}} && 
\hskip -.5em\raisebox{.7em}{{\rotatebox{270}{$\fib$}}} \hskip -.5em
\\
\hskip -1.8em G\ \fib L \ar[rr]^\alpha  \ar@/_.9em/[ddddddr]_{g\alpha}   &&
B \ar@/^.9em/[ddddddl]^{g} \hskip -.5em
\\&& 
\\&& 
\\&& 
\\&&
\\ &&
\\
&\ C .&
}
\]
\end{definition}

\begin{definition}
\label{Je to tak na 50 procent.}
Let $(\Phi,\Psi) : (\ttO',\ttM') \to  (\ttO'',\ttM'')$ be a morphism
of left operadic modules, $\oP$ an $\ttO''$-ope\-rad, $\Phi^*(\oP)$ the
restriction of $\oP$ along $\Phi$ as in 
Definition~\ref{Tento tyden byl Dominik v Praze.}, and $\oM$ a
$\oP$-module. The {\em restriction\/} $\Psi^*(\oM)$ of $\oM$ along
$(\Phi,\Psi)$ is the $\Phi^*(\oP)$-module with the components
$\Psi^*(\oM)(m) := \oM\big(\Psi(m)\big)$, for $m \in \ttM''$.
\end{definition}

\begin{example}
\label{Dnes opet prorezavani vetvi.}
Any operadic category $\ttO$ is a left operadic module over
itself. Having this in mind, each $\ttO$-operad is a module over itself. 
\end{example}

\begin{definition}
\label{Porad se mi bliska v oku.}
Suppose that $\oP$ is left unital in the sense of Definition~\ref{O
  vikendu ma byt krasne a ja trcim v Mexiku.} and $\oM$ a $\oP$-module.   
An arbitrary arrow $\alpha: M \to S$ of $\ttM$ induces
the diagram
\[
\xymatrix@R=0.1em@C=.8em{\hskip -1.8em G\ \fib G
  \ar[rr]^{\alpha_T}   
&&U_T \hskip -.5em
\\
\hskip -1.9em\raisebox{.7em}{\rotatebox{270}{$=$}}\hskip 1.55em\raisebox{.7em}{{\rotatebox{270}{$\fib$}}} && 
\hskip -.5em\raisebox{.7em}{{\rotatebox{270}{$\fib$}}} \hskip -.8em
\\
\hskip -1.8em G\ \fib M \ar[rr]^\alpha  \ar@/_.9em/[ddddddr]_{\alpha}   &&T \ar@/^.9em/[ddddddl]^{\id} \hskip -.5em
\\&& 
\\&& 
\\&& 
\\&&
\\ &&
\\
& T &
}
\]
with $\alpha_T$ the induced map between the fibers. The $\oP$-module
$\oM$ is {\em unital\/}  
if the  diagram
\[
\xymatrix{\oM(G) \ot \oP(U_T) \ar[r]^(.62){\nu_{\alpha_T}} & \oM(G) \ar@{=}[d]
\\
\ar[u]^{\id \ot \eta_T}
\hbox{\hskip -1.8em $\oM(G) \ot \unit$}\ar[r]^(.6)\cong & \oM(G)
}
\]  
in which $\eta_T$ is as in~(\ref{Nejdriv si musim prinytovat kolo.}), 
commutes for an arbitrary arrow $M \stackrel \alpha\to T$ in~(\ref{Asi
  jsem jednal nevhodne.}).
\end{definition}

\begin{example}
A unital operad $\oP$ is a unital module over itself,
cf.~Example~\ref{Dnes opet prorezavani vetvi.}. 
\end{example}

\begin{definition}
\label{Vcera jsem behal po promenade.}
Let $(\oM',\nu')$ and $(\oM'',\nu'')$ be left operadic $\oP$-modules. A {\em
  morphism\/} $\bfOmega :  \oM' \to \oM''$ is a family $\bfOmega =
\big\{\Omega_M :  \oM'(M) \to \oM''(M)\big\}$ of morphisms in $\ttV$ indexed by objects of $\ttM$
such that the diagram
\[
\xymatrix{\oM'(G) \ot \oP(L)\ar[r]^(.6){\nu'_\alpha} \ar[d]_{\Omega_G \ot
    \id} & 
\oM'(X)\ar[d]^{\Omega_X}
\\
\oM''(G) \ot \oP(L)\ar[r]^(.6){\nu''_\alpha} & \oM''(X)
}
\]
commutes for each $\alpha : X \to L$ with fiber $G$. We denote by $\hMod$
the corresponding category.
\end{definition}

\section{Free modules}
\label{Zitra pujdu s Jarkou nakupovat bundicku.}

In this section we study the structure of free operadic modules. The
main result, Proposition~\ref{Pozitri mi prijdou
  zapojit novou mycku.}, requires a certain rigidity property that
has no analog in the classical algebra. 
The base monoidal category will be from this point on the category
$\Vect$ of (graded) vector spaces over a commutative unital ring
$R$ though any closed monoidal category would do as well. 

To warm up, we recall the following simple classical facts. 
Let $E$ be a~vector space and $\Lambda$ a non-unital associative
algebra. Then
\[
\hFree(E) := E \oplus (\Lambda \ot E)
\] 
with the left $\Lambda$-action given by
$\lambda(e \oplus a\ot f) := 0 \oplus (\lambda \ot e  + \lambda a\ot
f)$, for $\lambda,a \in \Lambda$ and
$e,f \in E$, is the {free\/} left
$\Lambda$-module generated by $E$.

Assume now that $\Lambda$ possesses a two-sided unit $1\in \Lambda$ and restrict
to the subcategory of
left $\Lambda$-modules on which $1$ acts as the identity
endomorphism. The free $\Lambda$-module in this category is obtained by
identifying, in $\hFree(E)$, $e \oplus 0$ with $0
\oplus 1\ot e$ for each $e\in E$, explicitly
\begin{equation}
\label{Dnes odpoledne byly postdoktorandske seminare.}
\Free(E) :=  \frac{\hFree(E)}{\big(e\oplus 0 = 0
\oplus (1\ot e)\big)}   \cong \Lambda \ot E
\end{equation}
with the left $\Lambda$-action on the right hand side given by
$\lambda(a\ot e):= \lambda a \ot e$.

\begin{remark}
Let us act on both sides of the equality
$e \oplus 0 = 0\oplus (1\ot e)$ in the
denominator of~(\ref{Dnes odpoledne byly postdoktorandske seminare.})
by some $\lambda \in \Lambda$. By the definition of the left $\Lambda$-action, we get the equality $0 \oplus (\lambda
\ot e) = 0 \oplus (\lambda \cdot 1 \ot e)$, which implies the relation
\begin{equation}
\label{Vcera jsme s Jarkou nakoupili bundicky.}
\lambda \cdot 1 \ot e \sim \lambda \ot e\ 
\hbox { for each $\lambda \in \Lambda$, $e \in E$}
\end{equation}
that is $\lambda \cdot 1 \sim \lambda$ for each $\lambda \in
\Lambda$. The assumption that $1$ is also a right, not only the left
unit of $\Lambda$ guarantees that the `unexpected' relation
in~(\ref{Vcera jsme s Jarkou nakoupili bundicky.}) is satisfied automatically.
\end{remark}

Free modules in the operadic context 
have a similarly simple structure only when
the  following unary version of the weak blow-up 
axiom~\cite[Section~2]{env},
abbreviated $\WBU$, is fulfilled.

\vskip .5em
\noindent 
{\bf Weak blow-up} (category version).
For each morphism $f' : X' \to S$ in $\tt0$ with fiber
$F'$, and another morphism $\phi: F' \to F''$,  the
left diagram in~(\WBU) below can be {\em uniquely \/} completed to the diagram in
the right hand side so that $\phi$ will became the map between
the fibers induced by $\varphi$:
\[
\tag{{\tt WBU}}
\xymatrix@R=0.1em@C=.8em{F' \ar[rr]^{\phi}   &&F'' \hskip -.5em
\\
\hskip -2em{\hskip 1.65em\raisebox{.7em}{{\rotatebox{270}{$\fib$}}}}
&&
\\
X'  \ar@/_.9em/[ddddddr]_{f'}   &&
 \hskip -.5em
\\&& 
\\&& 
\\&& 
\\&&
\\ &&
\\
& S &
}
\hskip 6em
\xymatrix@R=0.1em@C=.8em{F' \ar[rr]^{\phi}   &&F'' \hskip -.5em
\\
\hskip -2em{\hskip 1.65em\raisebox{.7em}{{\rotatebox{270}{$\fib$}}}} &&\raisebox{.7em}{{\rotatebox{270}{$\fib$}}}
\\
X' \ar[rr]^\varphi  \ar@/_.9em/[ddddddr]_{f'}   &&
\ X'' . \ar@/^.9em/[ddddddl]^{f''} \hskip -.5em
\\&& 
\\&& 
\\&& 
\\&&
\\ &&
\\
&S&
}
\]
\noindent 
{\bf Weak blow-up} (module version). A straightforward
modification of the operadic blow-up with $\phi, \varphi$ and $f'$
arrows of \, $\ttM$ and $f''$ a morphism in $\ttO$.

\begin{exercise}
The operadic categories $\sfD(\ttA)$ and $\Tau(\ttA)$, as well as the left
modules $\sfD_\ttM(\ttA)$ and $\Tau_\ttM(\ttA)$, satisfy $\WBU$.
\end{exercise}

Let $\oE = \{\oE(M)\}_M$ be a collection of graded vector spaces
indexed by objects of~$\ttM$. For a given object $M \in \ttM$, put
\begin{equation}
\label{Uz mam skoro dva tydny ve Stockholmu za sebou.}
\textstyle
\hFree(\oE)(M) := \oE(M) \oplus \bigoplus_\alpha \big(\oP(T)\ot_\alpha \oE(G)\big), 
\end{equation}
where $\alpha$ runs over arrows $M \stackrel \alpha\to T$ in $\ttM$, $G$ is the
fiber of $\alpha$,
and $\oP(T)\ot_\alpha \oE(G):= \oP(T)\ot \oE(G)$, the subscript
$\alpha$ of the
tensor product symbol
indicating the summand corresponding to this concrete~$\alpha$.

For $\alpha : M \to T$ with the fiber $G$, the action
$\nu_\alpha:\oP(T) \ot \hFree(\oE)(G) \to  \hFree(\oE)(M)$ is
described as follows.
If $e \in \oE(G) \subset  \hFree(\oE)(G)$, the action is
`tautological,' i.e.\
\[
\nu_\alpha(e,t) := t \otimes e \in \oP(T) \ot_\alpha \oE(G) \in \hFree(\oE)(M). 
\]
Let  $s \ot h \in \oP(S) \ot_\beta
\oE(H) \in \hFree(\oE)(G)$, where 
$\beta : G \to S$ has the fiber $H$, and $\alpha$ be as before. In
this situation we have the diagram
\[
\xymatrix@R=-.1em{S &\ar[l]_\beta G \triangleleft H \hskip -2em
\\
\Afib& \Afib 
\\
X\ar@/_.9em/[rdddddd]^{g}
& M\ar[dddddd]^\alpha \ar[l]_(.4){\omega} \triangleleft H  \hskip -2em
\\
\\
\\
\\
\\
\\
& T
}  
\] 
constructed from the initial data $\alpha : M \to T$ and $\beta: G \to
S$  invoking the \WBU. We define the action of $t \in \oP(T)$ by
\[
\nu_\alpha(s \otimes h, t) := \gamma_g(s,t) \ot h \in \oP(X) \ot_{\omega}
\oE(H) \in \hFree(\oE)(M).
\] 

\begin{proposition}
\label{Z tech brejli se mi toci hlava.}
The above structure makes \/ $\hFree(\oE)$ an operadic
\/ $\oP$-module. It is the free left operadic module  generated by $\oE$. 
\end{proposition}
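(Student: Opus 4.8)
The plan is to establish the two assertions separately: that the operations $\nu_\alpha$ make $\hFree(\oE)$ a left operadic $\oP$-module in the sense of Definition~\ref{Zitra jedu s Jarkou na chalupu.}, and that this module solves the universal problem. In the first part the only thing needing real argument is the associativity hexagon of Definition~\ref{Zitra jedu s Jarkou na chalupu.} for an arrow $\alpha : L \to B$ of $\ttM$ and a morphism $g : B \to C$ of $\ttO$; well-definedness of $\nu_\alpha$ (that the formulas indeed land in the summands of~(\ref{Uz mam skoro dva tydny ve Stockholmu za sebou.}) asserted for them, which uses the $\WBU$ square and the identity ``fibre of a map $=$ fibre of the induced map between fibres'') and compatibility with degrees are read off the construction. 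Since $\hFree(\oE)(G)$ is the direct sum of the tautological summand $\oE(G)$ and the summands $\oP(S)\ot_\beta\oE(H)$ over arrows $\beta : G \to S$ of $\ttM$, and the $\nu_\alpha$ and $\gamma$ are additive, it suffices to check the hexagon on a tautological element $e\in\oE(G)$ and on an element $s\ot h\in\oP(S)\ot_\beta\oE(H)$.

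For the tautological element the computation is short: one application of the uniqueness clause of $\WBU$ shows that the blow-up of the pair $(g\alpha,\alpha_C)$ reproduces the original pair $(\alpha,g)$ we started from, after which both legs of the hexagon evaluate to $\gamma_g(f,c)\ot e$ in the summand $\oP(B)\ot_\alpha\oE(G)$. For $s\ot h\in\oP(S)\ot_\beta\oE(H)$ each leg unwinds into a nested pair of $\WBU$-completions — one along the fibre map $\alpha_C$ followed by one along $g\alpha$ on one side, a single one along $\alpha$ on the other — followed by two applications of $\gamma$. Equality of the two legs then rests on two inputs: \emph{(a)} the associativity pentagon~(\ref{Uz se prohlidky blizi.}) of the operad $\oP$; and \emph{(b)} the coherence statement that the two nested blow-ups along $X\stackrel{\alpha_C}\to F$ and $G\stackrel\beta\to S$ produce the same arrow of $\ttM$, over the same morphism of $\ttO$ beneath it, as the single blow-up of $\alpha$ against $\beta$. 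I expect \emph{(b)} to be the main obstacle: it has to be extracted from the uniqueness in $\WBU$ by checking that each of the two competing completions satisfies the defining conditions of the other, and this is exactly where the operadic-category axioms enter — the object- and morphism-level forms~(\ref{Po navratu ze Zimni skoly.}) and $(\phi_T)_{\Fib(c)}=\phi_S$ of the fibre-functor condition of Definition~\ref{Zacina mne zase svedit i leva ruka?}, together with the module counterpart~(\ref{Po navratu ze Zimni skoly II.}) of Definition~\ref{Zacina mne zase svedit i leva ruka!}. Everything surrounding this step is bookkeeping of summands.

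For the universal property, let $\oM$ be an arbitrary left operadic $\oP$-module with action $\nu'$, and $\{\iota_M : \oE(M)\to\oM(M)\}_M$ an arbitrary family in $\Vect$. A morphism $\bfOmega : \hFree(\oE)\to\oM$ of $\oP$-modules restricting to the $\iota_M$ is automatically unique: on the summand $\oP(T)\ot_\alpha\oE(G)$ one has $t\ot e=\nu_\alpha(e,t)$ by the tautological clause, so the defining square of Definition~\ref{Vcera jsem behal po promenade.} forces $\Omega_M(t\ot e)=\nu'_\alpha(\iota_G(e),t)$. I would take this formula, together with $\Omega_M|_{\oE(M)}:=\iota_M$, as the definition of $\bfOmega$, observe that it is a well-defined morphism of graded modules, and verify that it intertwines the actions, i.e.\ $\Omega_M\circ\nu_\alpha=\nu'_\alpha\circ(\Omega_G\ot\id)$ for every arrow $\alpha : M\to T$ of $\ttM$. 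On the tautological summand of $\hFree(\oE)(G)$ this is the defining formula itself; on a summand $\oP(S)\ot_\beta\oE(H)$ it becomes, after inserting the definitions of $\nu$ and $\bfOmega$ and the $\WBU$-completion of $\alpha$ against $\beta$, precisely the associativity axiom of Definition~\ref{Zitra jedu s Jarkou na chalupu.} for $\oM$ — hence valid because $\oM$ is an operadic $\oP$-module. This exhibits $\hFree(\oE)$ as the free left operadic $\oP$-module on $\oE$. In summary, both halves of the argument pivot on the blow-up coherence~\emph{(b)}: once it is in hand, the associativity of $\hFree(\oE)$ and the functoriality of the canonical map $\bfOmega$ follow by one and the same mechanism, and the remaining verifications are routine.
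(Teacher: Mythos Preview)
Your proposal is correct and follows the same route as the paper, only with considerably more detail: the paper dismisses the module axiom as ``a simple exercise'' and, for freeness, writes down the same formula for $\bfOmega$ and declares the verification ``not difficult,'' whereas you actually sketch the $\WBU$-coherence and the case split that these phrases hide. The one thing to watch is the Koszul sign: in the graded setting the paper's formula reads $\Omega_M(t\ot g)=(-1)^{|g||t|}\nu_\alpha(\omega_G(g),t)$, and your $\Omega_M(t\ot e)=\nu'_\alpha(\iota_G(e),t)$ should carry the same $(-1)^{|e||t|}$.
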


\begin{proof}
The first part of the proposition is a simple exercise. Let us
attend to the freeness. Denote by $\Coll$ the category of collections
$\oE = \{\oE(M)\}_M$ of vector spaces indexed by objects of $\ttM$ and
their component-wise morphisms, and recall from Definition~\ref{Vcera
  jsem behal po promenade.} the category $\hMod$ of left operadic
$\oP$-modules. There is an obvious forgetful functor \, $\Box : \hMod \to
\Coll$. The freeness of~$\hFree(\oE)$ means that, 
for each $\oE \in \Coll$, $\oM \in
\hMod$ and a morphism $\omega :\oE \to \Box\oM$ in $\Coll$, there exists
a unique morphism $\bfOmega :\hFree(\oE) \to \oM$ in $\hMod$ such that the diagram 
\[
\xymatrix{\oE\ar[r]^\omega\ar[d]_(.45)\iota & \Box \oM
\\
\ \Box\hFree(\oE) \ar@/_1em/[ur]^{\Box \bfOmega}
}
\] 
in which $\iota :\oE \to \Box\hFree(\oE)$ is the obvious inclusion,
commutes in $\Coll$.
We prove this claim by  giving an explicit formula for $\bfOmega$.
Namely, for 
\[
e \oplus (t \ot g) \in \oE(M) \oplus (\oP(T) \ot_\alpha \oE(G)) \subset
\hFree(\oE)(M),\ M \in \ttM,
\] 
we put
\[
\Omega_M(e \oplus (t \ot g)) := \omega_M(e) + 
(-1)^{|g||t|}\cdot \nu_\alpha(\omega_G(g),t) \in \oM(M).
\]
It is not difficult to verify that the above formula defines the
required morphism in $\hMod$, 
and that such a morphism is unique.
\end{proof}

Let us discuss the unital version of the above constructions, assuming that
that the operad $\oP$ is left unital in the sense of Definition~\ref{O
  vikendu ma byt krasne a ja trcim v Mexiku.} .
In the situation captured by diagram~\eqref{Asi jsem jednal nevhodne.} denote $1_T :=
\eta_T(1) \in \oP(U_T)$. For each $\alpha : M \to T$ in~\eqref{Asi
  jsem jednal nevhodne.} 
we identify, 
in~(\ref{Uz mam skoro dva tydny ve Stockholmu za
  sebou.}), $e \oplus 0 \in \oE(M)  
\subset \hFree(\oE)(\Upsilon)$
with 
\[
0 \oplus( 1_T
\ot_{\alpha_T} e) \in \big( \oP(U_T) \ot_{\alpha_T} 
\oE(M)\big) \subset \hFree(\oE)(M).
\] 
Finally, we denote  by
$\Free(\oE)$ the quotient of the free nonunital
$\oP$-module $\hFree(\oE)$ 
by the relation generated by the above identifications.

\begin{proposition}
The \/ $\oP$-module  \/
$\Free(\oE)$ is the free unital operadic \hbox{$\oP$-module} generated
by $\oE$. 
\end{proposition}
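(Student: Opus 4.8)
The strategy is the standard "free-unital-object-as-quotient-of-free-nonunital-object" argument, now adapted to the operadic setting, using the universal property of $\hFree(\oE)$ established in Proposition~\ref{Z tech brejli se mi toci hlava.} together with the left-unitality relation~(\ref{Jeste neco pres tyden v Mexiku.}). First I would fix notation: let $\hMod^u$ denote the full subcategory of $\hMod$ consisting of those left operadic $\oP$-modules $\oM$ that are unital in the sense of Definition~\ref{Porad se mi bliska v oku.}, and let $\Box^u : \hMod^u \to \Coll$ be the restriction of the forgetful functor $\Box$. The claim to be proved is that $\Free(\oE)$, as defined just before the statement, lies in $\hMod^u$ and represents the functor $\Coll(\oE,\Box^u(-))$, with unit the inclusion $\oE \hookrightarrow \Box^u\Free(\oE)$.

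**Key steps.** First I would check that the identification defining $\Free(\oE)$ is compatible with the $\oP$-action, i.e.\ that the $\oP$-action on $\hFree(\oE)$ descends to the quotient; here the \WBU\ axiom (module version) is what guarantees the diagrams governing the action close up consistently when one imposes $e \sim 1_T \ot_{\alpha_T} e$, exactly as in the classical computation recorded in~(\ref{Vcera jsme s Jarkou nakoupili bundicky.}). Second I would verify that $\Free(\oE)$ is actually unital as a $\oP$-module: given an arrow $\alpha : M \to T$ in $\ttM$ with the induced map $\alpha_T$ between fibers as in~(\ref{Asi jsem jednal nevhodne.}), one must show $\nu_{\alpha_T}(-, 1_T) = \id$ on $\Free(\oE)(G)$ (where $G$ is the fiber of $\alpha$); on the summand $\oE(G)$ this is precisely the imposed relation, and on a summand $\oP(S)\ot_\beta \oE(H)$ it follows from the associativity pentagon of Definition~\ref{Zitra jedu s Jarkou na chalupu.} together with the left unitality~(\ref{Jeste neco pres tyden v Mexiku.}) of $\oP$, applied to the composite $G \stackrel{\alpha_T}\to U_T$ with the fiber $G$ of the identity $G \to G$. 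Third, for the universal property: given $\oM \in \hMod^u$ and $\omega : \oE \to \Box^u\oM$ in $\Coll$, Proposition~\ref{Z tech brejli se mi toci hlava.} produces a unique $\bfOmega : \hFree(\oE) \to \oM$ in $\hMod$ extending $\omega$; I would then check that $\bfOmega$ factors through the quotient $\Free(\oE)$, which amounts to checking $\Omega_M(e \oplus 0) = \Omega_M(0 \oplus 1_T\ot_{\alpha_T} e)$, and the right-hand side equals $(-1)^{|e||1_T|}\nu^{\oM}_{\alpha_T}(\omega_M(e), 1_T) = \nu^{\oM}_{\alpha_T}(\omega_M(e),1_T)$ (the sign is trivial since $|1_T|=0$), which is $\omega_M(e)$ precisely because $\oM$ is unital. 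Uniqueness of the induced map $\overline{\bfOmega} : \Free(\oE) \to \oM$ is inherited from the uniqueness of $\bfOmega$ and the surjectivity of the quotient map.

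**Main obstacle.** The part requiring genuine care, rather than bookkeeping, is the second step — verifying that the quotient $\Free(\oE)$ is itself a unital $\oP$-module on all summands, not just on the generating copy of $\oE$. This is where one must trace through how the \WBU\ completion interacts with the left-unit $1_T$: the action $\nu_{\alpha_T}(-,1_T)$ on a class represented by $s\ot_\beta h \in \oP(S)\ot_\beta\oE(H)$ is computed via the blow-up diagram built from $\alpha_T$ and $\beta$, and one needs that the resulting $\gamma$-composite with $1_T$ returns $s$; this reduces to the left unitality of $\oP$ only after identifying the relevant fiber data, and it is the analogue of the remark around~(\ref{Vcera jsme s Jarkou nakoupili bundicky.}) that the "unexpected relation" $\lambda\cdot 1 \sim \lambda$ is automatically satisfied. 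I would isolate this as the one computation to write out in full; everything else is a transcription of the classical argument~(\ref{Dnes odpoledne byly postdoktorandske seminare.}) into operadic language.
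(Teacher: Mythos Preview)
Your proposal is correct and follows the same approach as the paper; the paper's own proof is the single sentence ``Follows from the freeness of $\hFree(\oE)$ established in Proposition~\ref{Z tech brejli se mi toci hlava.} combined with the definition of unitality,'' and your plan is precisely the unpacking of that sentence into its constituent verifications (action descends, quotient is unital, universal map factors). The step you flag as the main obstacle---checking that $\nu_{\alpha_T}(-,1_T)$ acts as the identity on the non-generating summands $\oP(S)\ot_\beta\oE(H)$---is exactly the operadic analog of the ``unexpected relation'' discussion in the paper's Remark around~(\ref{Vcera jsme s Jarkou nakoupili bundicky.}), so you have identified the right place where care is needed.
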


\begin{proof}
Follows from the freeness of \/ $\hFree(\oE)$ established in
Proposition~\ref{Z tech brejli se mi toci hlava.} combined with the
definition of unitality. 
\end{proof} 

A structure result for free unital $\oP$-modules similar to the
isomorphism in~\eqref{Dnes odpoledne byly postdoktorandske seminare.}
holds only at objects of $\ttM$ that are rigid in the sense of

\begin{definition}
\label{Opet budu nytovat podvozek.}
An object $M$ of an operadic $\ttO$-module $\ttM$ is {\em rigid\/} if
there is  precisely one object $\odot$ of $\ttO$ and  one arrow $M \to
\odot$ with fiber $M$.
\end{definition}

\begin{proposition}
\label{Pozitri mi prijdou zapojit novou mycku.}
For a rigid object $M$  of a left operadic $\ttO$-module \/ $\ttM$, one
has the isomorphism
\begin{equation}
\label{Vcera prijel Benoit Fresse.}
\Free(\oE)(M) \cong \bigoplus_\alpha \big(\oP(T)\ot_\alpha \oE(G)\big),
\end{equation}
where the direct sum runs over all arrows
$\alpha : M \to T$ of \/ $\ttM$ and where  $G$ is the fiber of $\alpha$.
\end{proposition}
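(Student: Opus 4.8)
The plan is to carry out, at the single object $M$, the same bookkeeping as in the classical computation recorded in~\eqref{Dnes odpoledne byly postdoktorandske seminare.}: the generator $\oE(M)$ of $\hFree(\oE)(M)$ from~\eqref{Uz mam skoro dva tydny ve Stockholmu za sebou.} is absorbed, via the identifications defining $\Free(\oE)$, into one of the pieces $\oP(T)\ot_\alpha\oE(G)$, and what survives is exactly the right-hand side of~\eqref{Vcera prijel Benoit Fresse.}. The role of rigidity (Definition~\ref{Opet budu nytovat podvozek.}) is precisely to force this absorption into a \emph{unique} piece. So I would begin by letting $\epsilon:M\to\odot$ be the unique arrow of $\ttM$ with fiber $M$ supplied by rigidity, and first checking that $\odot$ is the fiber of $\id_\odot$: feeding $\epsilon$ and $\id_\odot:\odot\to\odot$ into the ``fiber of the fiber'' identity (the module version~\eqref{Po navratu ze Zimni skoly II.} of~\eqref{Po navratu ze Zimni skoly.}) produces an arrow $M\to\Fib(\id_\odot)$ of fiber $M$, which by the uniqueness clause of rigidity must be $\epsilon$ again; hence $\Fib(\id_\odot)=\odot$, i.e.\ $\odot=U_\odot$, so the left unitality of $\oP$ supplies $\eta_\odot:\unit\to\oP(\odot)$ and the unit $1_\odot:=\eta_\odot(1)\in\oP(\odot)$ is available.

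Next I would identify the defining relations of $\Free(\oE)$ in degree $M$. An identification used to form $\Free(\oE)$ puts the image of some $e\in\oE(M)$, as $1\ot e$, into a summand $\oP(U)\ot_\beta\oE(M)$ of $\hFree(\oE)(M)$ indexed by an arrow $\beta:M\to U$ of fiber $M$; by rigidity the only such $\beta$ is $\epsilon$, with $U=\odot$. Hence the only identification touching the generator $\oE(M)$ is $e\sim 1_\odot\ot_\epsilon e$, living in the single summand $\oP(\odot)\ot_\epsilon\oE(M)$, so the corresponding subspace of $\hFree(\oE)(M)$ is the ``diagonal'' copy $\Delta_M:=\{\,e-(1_\odot\ot_\epsilon e)\mid e\in\oE(M)\,\}$, sitting between the $\oE(M)$-summand and the $\epsilon$-summand just as in the classical picture.

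The technical heart of the argument --- and the step I expect to be the real obstacle --- is to show that passing from these identifications to the sub-$\oP$-module of $\hFree(\oE)$ that they generate creates \emph{no further} relation in degree $M$, i.e.\ that this sub-$\oP$-module meets $\hFree(\oE)(M)$ exactly in $\Delta_M$. Concretely, for every arrow $\alpha:M\to T$ of $\ttM$ with fiber $G$, every generating identification $g-(1_{T'}\ot_\delta g)$ at $G$, and every $t\in\oP(T)$, one must verify that the image $\nu_\alpha\big(g-(1_{T'}\ot_\delta g),t\big)$ in $\hFree(\oE)(M)$ vanishes. Here one uses the explicit $\hFree(\oE)$-action, $\nu_\alpha(g,t)=t\ot_\alpha g$ and $\nu_\alpha(1_{T'}\ot_\delta g,t)=\gamma_p(1_{T'},t)\ot_\omega g$, with $p$ and $\omega$ produced from $\alpha,\delta$ by the weak blow-up axiom $\WBU$; since $\delta$ is one of the ``fiber of an identity'' arrows, the $\WBU$-compatibilities should force $\omega=\alpha$, and then the (two-sided) unitality of $\oP$ yields $\gamma_p(1_{T'},t)=t$, so the image is $t\ot_\alpha g-t\ot_\alpha g=0$. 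This is the operadic incarnation of the remark following~\eqref{Dnes odpoledne byly postdoktorandske seminare.}: it is the \emph{right} unitality of $\oP$ that makes the potential ``unexpected'' relations~\eqref{Vcera jsme s Jarkou nakoupili bundicky.} hold automatically. It also makes transparent why rigidity is indispensable: for a non-rigid $M$, several arrows of $\ttM$ of fiber $M$ (with varying codomain) would give several identifications, gluing the different elements $1\ot e$ to one another and destroying the direct-sum form on the right of~\eqref{Vcera prijel Benoit Fresse.}.

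Granting this, the conclusion is exactly as in~\eqref{Dnes odpoledne byly postdoktorandske seminare.}: one has $\Free(\oE)(M)=\hFree(\oE)(M)/\Delta_M$, and the composite
\[
\bigoplus_\alpha\big(\oP(T)\ot_\alpha\oE(G)\big)\;\hookrightarrow\;\hFree(\oE)(M)\;\twoheadrightarrow\;\Free(\oE)(M)
\]
is an isomorphism: it is surjective because, after moving the $\oE(M)$-component of any element into the $\epsilon$-summand by means of $\Delta_M$, one obtains a representative in $\bigoplus_\alpha(\oP(T)\ot_\alpha\oE(G))$, and injective because $\Delta_M$ meets this subspace only in $0$. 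This proves~\eqref{Vcera prijel Benoit Fresse.}; compatibility of the isomorphism with the module structure is inherited from the freeness established in Proposition~\ref{Z tech brejli se mi toci hlava.}.
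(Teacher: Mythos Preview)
Your core argument --- that rigidity singles out a unique arrow $\epsilon:M\to\odot$ with fiber $M$, so that each $e\in\oE(M)$ is identified with exactly one element $1_\odot\ot_\epsilon e$ of the right-hand side --- is precisely the paper's proof, which consists of a single sentence to this effect. Your preliminary verification that $\odot=U_\odot$ (so that $1_\odot$ is defined) is a nice detail the paper omits.

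Where you go beyond the paper is your ``technical heart'' paragraph, checking that identifications at other objects $G$, when propagated to $M$ through the $\oP$-action, contribute nothing beyond $\Delta_M$. The paper does not raise this issue at all. Your instinct that two-sided unitality of $\oP$ is what makes these propagated relations vanish (paralleling the remark around~\eqref{Vcera jsme s Jarkou nakoupili bundicky.}) is sound, but the specific claim that ``$\WBU$ should force $\omega=\alpha$'' is not automatic as stated: it holds when the identification arrow $\delta$ at $G$ happens to equal $\alpha_T$, the fiber-map induced by the very $\alpha$ you are propagating along, but identifications at $G$ may a priori arise from \emph{other} arrows with fiber $G$, and for those the $\WBU$-lift need not recover $\alpha$. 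So you have correctly located a subtlety the paper glosses over, though your sketch does not quite close it either.
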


\begin{proof}
The rigidity of $M$ guarantees that each $e \oplus 0 \in \oE(M)$ is
identified in $\Free(\oE)(M)$ with precisely one element of the sum in
the right hand side of~(\ref{Vcera prijel Benoit Fresse.}).
\end{proof}

\begin{example}
\label{Je to k zblazneni to blyskani.}
Let $\ttO$ be the terminal unary operadic category with one object $\odot$ and
$\ttM$ the left  $\ttO$-module with one object $\star$ and one arrow
$\star \to \odot$. A left unital $\ttO$-operad is a classical left
unital associative algebra $\Lambda$, and a unital $\oP$-module is the
classical right $\Lambda$-module on which the left unit $1 \in \Lambda$ acts
trivially. The object $\star \in \ttM$ is rigid and~(\ref{Vcera
  prijel Benoit Fresse.}) recovers the isomorphism~(\ref{Dnes odpoledne
  byly postdoktorandske seminare.}).

Let  $\ttO$ be as before, and let $\ttM$ have one object $\star$ but
{\em no\,} arrow. A unital $\oP$-module is just a vector space. 
The free unital $\oP$-module generated by $\oE$ is thus
$\oE$ again, while the right hand side of~(\ref{Vcera prijel Benoit
  Fresse.}) is trivial. This shows that~(\ref{Vcera prijel Benoit
  Fresse.}) 
need not to hold  without the rigidity assumption.   
\end{example}
 
\section{The bar resolution}
\label{Snad si jeste jednou vytahnu Tereje.}

Our aim will be to introduce an operadic analog of the
following classical construction.
Let $\Lambda$ be a (graded) unital associative algebra and $C$ a left
\hbox{$\Lambda$-module}. An (un-normalized) bar resolution of $C$, cf.~\cite[Section~X.2]{Homology}, is an augmented chain complex $\beta_*(\Lambda,C)
\stackrel\epsilon\to C$ of the form
\[
\cdots \stackrel{\partial_{n+2}}\longrightarrow \beta_{n+1}(\Lambda,C)
\stackrel{\partial_{n+1}}\longrightarrow\beta_n(\Lambda,C) 
\stackrel{\partial_{n}}\longrightarrow \beta_{n-1}(\Lambda,C) 
\stackrel{\partial_{n-1}}\longrightarrow \cdots  \stackrel{\partial_1}\longrightarrow
\beta_0(\Lambda,C) \stackrel{\epsilon}\longrightarrow C,
\]
where $\beta_n(\Lambda,C) := \Lambda \ot \Lambda^{\ot n} \ot C$ and
the differential
$\partial_n :  \beta_n(\Lambda,C) \to  \beta_{n-1}(\Lambda,C)$ is
defined as the
sum $\partial_n := \sum_0^n (-1)^i d_i$ with
\[
d_i(\lambda_0  \ot \cdots \ot \lambda_n \ot c) := 
\lambda_0 \ot \cdots \ot\lambda_i\lambda_{i+1} \ot \cdots \lambda_n \ot c
\]
if\, $0 \leq i \leq n-1$, while 
\[
d_n(\lambda_0  \ot \cdots \ot \lambda_n \ot c) := 
\lambda_0 \ot \cdots\ot \lambda_n c,
\]
for $\Rada \lambda0n \in \Lambda, \ c \in C$. The augmentation $\beta_0(\Lambda,C) \stackrel{\epsilon}\longrightarrow C$ is
defined using the left action of $\Lambda$ on $C$ by 
$\epsilon(\lambda_0 \ot c) := \lambda_0  c$. The following theorem is
classical, cf. again~\cite[Section~X.2]{Homology}.

\begin{theorem}
\label{Porad se mi blyska.}
The augmented chain complex $\beta_*(\Lambda,C)
\stackrel{\epsilon}\longrightarrow C$ is an acyclic resolution 
of $C$ via free left $\Lambda$-modules.
\end{theorem}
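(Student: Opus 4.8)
The statement is the classical un-normalized bar resolution, so the plan is simply to verify the three constituent facts — each term is free, $\partial^2=0$, and the augmented complex is exact — in a form that will later transplant to the operadic setting.

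\textit{Freeness of the terms.} First I would rewrite $\beta_n(\Lambda,C)=\Lambda\ot E_n$ with $E_n:=\Lambda^{\ot n}\ot C$, the left $\Lambda$-action being left multiplication on the outermost factor. By the isomorphism recorded in~\eqref{Dnes odpoledne byly postdoktorandske seminare.} this identifies $\beta_n(\Lambda,C)$ with the free unital left $\Lambda$-module $\Free(E_n)$ on the graded module $E_n$ (here one uses that $\Lambda$ is unital and that we work in the category of left $\Lambda$-modules on which $1$ acts as the identity). Hence every term of $\beta_*(\Lambda,C)$ is free, and the augmentation $\epsilon$ is $\Lambda$-linear straight from its definition $\epsilon(\lambda_0\ot c):=\lambda_0 c$ through the action of $\Lambda$ on $C$.

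\textit{$\partial^2=0$.} Next I would record that the face operators $d_0,\dots,d_n$ satisfy the simplicial identity $d_id_j=d_{j-1}d_i$ for $i<j$: for $0\le i<j\le n-1$ this is the associativity of the multiplication of $\Lambda$ applied at two adjacent or disjoint spots, for $j=n$ it follows from the left $\Lambda$-module axiom for $C$ (together with associativity), and for the remaining pairs it is immediate. Expanding $\partial_{n-1}\partial_n=\sum_{i<j}\big((-1)^{i+j}+(-1)^{i+j-1}\big)d_{j-1}d_i$, the summands cancel in pairs, so $\partial^2=0$; the same cancellation with $\partial_0:=\epsilon$ gives $\epsilon\partial_1=0$, so that $\beta_*(\Lambda,C)\xrightarrow{\epsilon}C$ is indeed an augmented chain complex.

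\textit{Acyclicity.} Finally, for exactness I would exhibit the usual extra degeneracy: put $s_{-1}:C\to\beta_0(\Lambda,C)$, $s_{-1}(c):=1\ot c$, and, for $n\ge 0$, $s_n:\beta_n(\Lambda,C)\to\beta_{n+1}(\Lambda,C)$, $s_n(\lambda_0\ot\cdots\ot\lambda_n\ot c):=1\ot\lambda_0\ot\cdots\ot\lambda_n\ot c$, where $1\in\Lambda$ is the unit. From $d_0s_n=\id$, $d_{i+1}s_n=s_{n-1}d_i$ for $0\le i\le n$, and $\epsilon s_{-1}=\id_C$, one assembles the contracting-homotopy identities $s_{n-1}\partial_n+\partial_{n+1}s_n=\id$ for all $n\ge 0$ (with the convention $\partial_0:=\epsilon$, and checking the level-$0$ identity $\partial_1 s_0+s_{-1}\epsilon=\id$ by hand). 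Thus $s$ contracts the augmented complex; it is merely $R$-linear, not $\Lambda$-linear, which is exactly what acyclicity requires. Combining the three points, $\beta_*(\Lambda,C)\xrightarrow{\epsilon}C$ is an acyclic resolution of $C$ by free left $\Lambda$-modules. The only point needing genuine care — and the only thing not completely formal — is the sign/Koszul-sign bookkeeping in the homotopy identities $s\partial+\partial s=\id$ in the graded setting, together with the observation that one must \emph{not} demand $s$ to be $\Lambda$-linear; everything else is a routine unwinding of the definitions.
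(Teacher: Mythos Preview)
Your proof is correct and complete. The paper does not actually prove this theorem---it merely states it as classical and cites Mac Lane's \emph{Homology}, Section~X.2---so there is no in-paper argument to compare against directly. That said, your contracting homotopy (inserting the unit on the left) is precisely the strategy the paper adopts when proving the operadic generalisation, Theorem~A: there the authors build $h_n$ by prepending the unit $1_\odot$ to a tower, exactly paralleling your $s_n(\lambda_0\ot\cdots)=1\ot\lambda_0\ot\cdots$. So your approach is not only correct but also the one the paper implicitly has in mind. One small remark: your worry about Koszul signs is unnecessary here, since neither the faces $d_i$ nor your $s_n$ permute tensor factors, so no extra signs arise in the graded case.
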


Let $1 \in \Lambda$ be the unit of $\Lambda$. For each $n \geq 1$
define linear operators $s_j :  \beta_n(\Lambda,C) \to
\beta_{n+1}(\Lambda,C)$, $0 \leq j \leq n$, by
\[
s_j(\lambda_0 \ot \cdots \ot \lambda_n \ot c) := 
\lambda_0 \ot \cdots \ot \lambda_i \ot 1 \ot\lambda_{i+1} \ot \cdots \lambda_n \ot c
\]
for $0 \leq j \le n$, and
\[
s_n(\lambda_0 \ot \cdots \ot \lambda_n \ot c) := 
\lambda_0 \ot \cdots \ot \lambda_n \ot 1
\ot c.
\]
The following statement is also classical.

\begin{proposition}
\label{Pres den taje, v noci mrzne.}
The family  $\beta_\bullet(\Lambda,C) = \{\beta_n(\Lambda,C)\}_{n\geq
  0}$ of vector spaces
with the operators $d_i$ and~$s_j$ defined above is a simplicial
vector space.  The  bar resolution
$\beta_\bullet(\Lambda,C)$ is its associated chain complex.
\end{proposition}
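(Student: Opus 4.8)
The plan is to check that the operators $d_i$ and $s_j$ obey the simplicial identities recalled in Remark~\ref{Dnes mi vypalovali bradavici.}, and then to invoke the standard fact that for a simplicial object in an additive category the alternating sum $\partial_n:=\sum_{i=0}^n(-1)^id_i$ of its face operators is a differential. Since the $d_i$ appearing here are literally the operators used to define $\partial_n$ of the bar resolution, the second assertion of the proposition follows by inspection once the first is established; so essentially all the content is the verification of the simplicial identities.

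First I would organise the operators by splitting off the ``ragged'' right end of the tensor word $\lambda_0\ot\cdots\ot\lambda_n\ot c$. The inner faces $d_i$, $0\le i\le n-1$, multiply the neighbouring factors $\lambda_i\lambda_{i+1}$ inside $\Lambda^{\ot(n+1)}$, while the last face $d_n$ instead uses the left $\Lambda$-action, $\lambda_n\ot c\mapsto\lambda_nc$; likewise the degeneracies $s_j$, $0\le j\le n-1$ (with the $n=0$ case $s_0(\lambda_0\ot c):=\lambda_0\ot1\ot c$ understood), insert the unit $1\in\Lambda$ between $\lambda_j$ and $\lambda_{j+1}$, whereas $s_n$ inserts $1$ between $\lambda_n$ and $c$. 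With this dictionary every identity of Remark~\ref{Dnes mi vypalovali bradavici.} becomes routine bookkeeping. When all indices occurring are inner, $d_id_j=d_{j-1}d_i$ is associativity of the product of $\Lambda$, $s_is_j=s_{j+1}s_i$ is the evident commutation of two insertions of $1$, $d_is_j=s_{j-1}d_i$ (resp.\ $s_jd_{i-1}$ for $i>j+1$) expresses that inserting $1$ and multiplying an adjacent pair elsewhere commute, and $d_js_j=d_{j+1}s_j=\id$ records that $1$ is a two-sided unit of $\Lambda$. The remaining cases are exactly those in which the rightmost index is involved, and there the same identities reduce to the module axioms for $C$: on $\beta_n$ the relation $d_{n-1}d_n=d_{n-1}d_{n-1}$ is associativity $\lambda_{n-1}(\lambda_nc)=(\lambda_{n-1}\lambda_n)c$ of the action, $d_ns_n=\id$ (insert $1$ after $\lambda_n$, then multiply it back) uses that $1$ is a right unit of $\Lambda$, and $d_{n+1}s_n=\id$ uses the unitality $1\cdot c=c$; faces and degeneracies with disjoint support in the word commute as prescribed. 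This finishes the proof that $\beta_\bullet(\Lambda,C)$ is a simplicial vector space.

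For the second sentence, the un-normalized chain complex associated to a simplicial vector space is by definition $\beta_n$ in degree $n$ with differential $\partial_n=\sum_{i=0}^n(-1)^id_i$, and $\partial_{n-1}\partial_n=0$ is the purely formal consequence of $d_id_j=d_{j-1}d_i$ for $i<j$ (the $i<j$ terms of $\partial\partial$ cancel the $i\ge j$ terms after re-indexing, a computation we need not reproduce). Comparing with the formulas in the text, these are exactly the $\partial_n$ of $\beta_*(\Lambda,C)$, so the associated chain complex is the bar resolution. I do not anticipate a genuine obstacle: the statement is classical, see \cite[Section~X.2]{Homology}, and the only point requiring care is the non-uniform behaviour of $d_n$ and $s_n$ at the right end of the word, i.e.\ the case analysis in the previous paragraph. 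A conceptual alternative that bypasses it is to observe that $\beta_\bullet(\Lambda,C)$ is the two-sided simplicial bar construction $B_\bullet(\Lambda,\Lambda,C)$, whose simpliciality is then automatic.
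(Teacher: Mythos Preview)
Your verification is correct and complete. The paper, however, does not prove this proposition at all: it is introduced with the phrase ``The following statement is also classical'' and left without a proof environment, deferring implicitly to \cite[Section~X.2]{Homology}. So there is nothing to compare on the level of argument---you have supplied what the paper omits. Your case analysis (inner indices reducing to associativity and unitality of $\Lambda$, the boundary index $n$ reducing to the module axioms for $C$) is the standard way to do this, and your closing remark that $\beta_\bullet(\Lambda,C)$ is the two-sided bar construction $B_\bullet(\Lambda,\Lambda,C)$ is a perfectly good conceptual shortcut.
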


The operadic analog of  $\beta_*(\Lambda,C)$
will possess the similar properties.
The input data will be a~left module $\ttM$ over an 
operadic category $\ttO$,
an $\ttO$-operad $\oP$ and a \hbox{$\oP$-module} $\oM$.
The basic building blocks will be the diagrams $\Twr_M$ of the form
\begin{equation}
\label{Poradmnebolipravepredlokti}
\Twr_M: \
\xymatrix@R=-.1em{T_0&\ar[l]_{f_1} T_1 &\ar[l]_{f_2} \cdots &
\ar[l]_{f_{n-1}} T_{n-1} &\ar[l]_(.4){f_n} T_n &\ar[l]_\alpha M
\\
& \vfib & \cdots &\vfib& \vfib& \vfib
\\
& F_1 & \cdots & F_{n-1}& F_n& N
}  
\end{equation}
where $\Rada T0n$ are objects of \,$\ttO$,
$\Rada f1n$ are morphisms of $\ttO$, and 
 $M \stackrel \alpha\to T_n  $ is an arrow of
$\ttM$. Further, $\Rada F1n$ are the
fibers of $\Rada f1n$, respectively, and $N$ is the fiber of $\alpha$.
We will call $(T_0,\Rada F1n,N)$ the {\em fiber sequence\/} of \, $\Twr_M$.
For $n \geq 0$ denote
\begin{equation}
\label{Jarusku boli brisko.}
\beta_n(\oP,\oM)(M) :=
\bigoplus \oP(T_0) \ot \oP(F_1) \ot \cdots \ot \oP(F_n) \ot \oM(N)
\end{equation}
with the direct sum running over all towers $\Tau_M$
in~(\ref{Poradmnebolipravepredlokti}). We 
assemble the above vector spaces into an
augmented chain complex
\[
\cdots  
\stackrel{\partial_{n+1}}\longrightarrow\beta_n(\oP,\oM)(M) 
\stackrel{\partial_{n}}\longrightarrow \beta_{n-1}(\oP,\oM)(M) 
\stackrel{\partial_{n-1}}\longrightarrow \cdots  \stackrel{\partial_1}\longrightarrow
\beta_0(\oP,\oM)(M) \stackrel{\epsilon}\longrightarrow \oM(M).
\]
Its $n$th differential $\partial_n$  is the sum $\sum_0^n (-1)^i d_i$,
with $d_i$ acting on an element 
\begin{equation}
\label{Pisi v tom podivnem dome.}
t_0 \ot p_1 \ot \cdots \ot p_i \ot p_{i+1} \ot \cdots \ot p_n \ot n
\end{equation}
of \,$\oP(T_0) \ot \oP(F_1)\ot  \cdots \ot \oP(F_i)\ot \oP(F_{i+1}) \ot \cdots 
\ot \oP(F_n) \ot \oM(N)$
as follows. If $1\leq i \leq n-1$, replace first the piece
\[
\xymatrix@R=-.1em{T_{i-1} &\ar[l]_(.4){f_i} T_i &\ar[l]_{f_{i+1}} T_{i+1}
\\
&\vfib & \vfib
\\
&
F_i & F_{i+1}
}
\]
in the tower~(\ref{Poradmnebolipravepredlokti}) by 
\[
\xymatrix@R=-.1em@C=3.4em{T_{i-1} &\ar[l]_(.4){f_if_{i+1}} T_{i+1}
\\
 & \vfib
\\
 & \ F' ,
}
\]
where $F'$ is the fiber of $f_if_{i+1}$.
This situation gives rise to
the following instance of~(\ref{Po navratu ze Zimni skoly.}):
\begin{equation}
\label{Skube mi v prave ruce.}
\xymatrix@R=0.1em@C=.8em{\hskip -3em F_{i+1}\ \fib F'
  \ar[rr]^{(f_{i+1})_{T_{i-1}}}   
&&F_i \hskip -.5em
\\
\hskip -2em\raisebox{.7em}{\rotatebox{270}{$=$}}\hskip 1.65em\raisebox{.7em}{{\rotatebox{270}{$\fib$}}} && 
\hskip -.5em\raisebox{.7em}{{\rotatebox{270}{$\fib$}}} \hskip -.5em
\\
\hskip -1.8em F_{i+1}\ \fib T_{i+1} \ar[rr]^{f_{i+1}}  
\ar@/_.9em/[ddddddr]_{f_{i+1}f_i}   &&
T_i \ar@/^.9em/[ddddddl]^{f_i} \hskip -.5em
\\&& 
\\&& 
\\&& 
\\&&
\\ &&
\\
&\ T_{i-1} .&
}
\end{equation}
The operation $d_i$ now 
replaces $ p_i \ot p_{i+1}$ in~(\ref{Pisi v tom podivnem dome.}) by $\gamma(p_{i+1},p_i)\in \oP(F')$, where $\gamma$ is
the operadic composition induced by  the subdiagram 
$F_{i+1} \fib F' \to F_i$
of~(\ref{Skube mi v prave ruce.}).
To define $d_0$, we cut the left end
\[
\xymatrix@R=-.1em{T_0 &\ar[l]_(.4){f_1} T_1 &\ar[l]_{f_{2}} T_{2}&
  \ar[l]_{f_3} \cdots
\\
&\vfib & \vfib
\\
&
F_1 & F_{2}
}
\]
of the tower~(\ref{Poradmnebolipravepredlokti}) to
\[
\xymatrix@R=-.1em{ T_1 &\ar[l]_{f_{2}} T_{2}&
  \ar[l]_{f_3} \cdots
\\
&\vfib & 
\\
&
F_2 & 
}
\]
and replace $t_0 \ot p_1$ in~(\ref{Pisi v tom podivnem dome.}) by
$\gamma_{f_1}(p_1,t_0) \in \oP(T_1)$.
To define $d_n$, we replace
\[
\xymatrix@R=-.1em{T_{n-1} &\ar[l]_(.4){f_n} T_n &\ar[l]_(.4){\alpha} M
\\
&\vfib & \vfib
\\
&
F_n & N
}
\]
in~(\ref{Poradmnebolipravepredlokti}), by 
\[
\xymatrix@R=-.1em{T_{n-1} &\ar[l]_(.4){f_n\alpha} M
\\
 & \vfib
\\
 & N'
}
\]
which gives rise to
\begin{equation}
\label{Porad mi skube v prave ruce.}
\xymatrix@R=0.1em@C=.8em{\hskip -2em N\ \fib N'
  \ar[rr]^{\alpha_{T_{n-1}}}   
&&F_n \hskip -.5em
\\
\hskip -2em\raisebox{.7em}{\rotatebox{270}{$=$}}\hskip 1.65em\raisebox{.7em}{{\rotatebox{270}{$\fib$}}} && 
\hskip -.5em\raisebox{.7em}{{\rotatebox{270}{$\fib$}}} \hskip -.5em
\\
\hskip -2.2em N\ \fib M \ar[rr]^{\alpha}  
\ar@/_.9em/[ddddddr]_{f_{n}\alpha}   &&
T_n \ar@/^.9em/[ddddddl]^{f_n} \hskip -.5em
\\&& 
\\&& 
\\&& 
\\&&
\\ &&
\\
&\ T_{n-1} .&
}
\end{equation}
We finally replace $p_n \ot m$ in~(\ref{Pisi v tom podivnem dome.}) by
the composite $\nu(n,p_n)\in \oM(N')$ associated to the
subdiagram \hbox{$N \, \fib N' \to F_n$} of~(\ref{Porad mi skube v prave
  ruce.}).  

It remains to attend to $\epsilon:
\beta_0(\oP,\oM)(M) \to \oM(M)$.
By definition,
$\beta_0(\oP,\oM)(M)$ is the direct sum $\bigoplus \oP(T_0) \ot \oM(N)$
over diagrams $N \fib M \stackrel\alpha\to T_0$. 
For $t_0 \ot n \in \oP(T_0) \ot \oM(N)$ we define $\epsilon(t_0 \ot
n)$ to be $\nu_\alpha(n,t_0) \in \oM(M)$.

\begin{proposition}
  One has $\pa_n \pa_{n+1} = 0$ for all $n \geq 1$, and also \ $\epsilon
\pa_1 =0$.
\end{proposition}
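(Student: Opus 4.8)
The plan is to proceed exactly as in the classical case recalled in Theorem~\ref{Porad se mi blyska.} and Proposition~\ref{Pres den taje, v noci mrzne.}: I would first verify the simplicial identities
\[
d_i d_j = d_{j-1} d_i \qquad (0 \le i < j \le n)
\]
for the face operators on $\beta_n(\oP,\oM)(M)$, and then deduce $\partial_n\partial_{n+1}=0$ by the usual telescoping — in $\partial_n\partial_{n+1}=\sum_{i,j}(-1)^{i+j}d_id_j$ one substitutes $d_id_j=d_{j-1}d_i$ in the range $i<j$, reindexes $j$, and cancels against the range $i\ge j$, the graded signs hidden in the swaps $p\ot q\mapsto\gamma(q,p)$ combining in the standard way. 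The identity $\epsilon\partial_1=0$ will fall out of the same verification at the bottom of the complex, using the $\oP$-module axiom of Definition~\ref{Zitra jedu s Jarkou na chalupu.}.

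All the genuine content is in the simplicial identities, and there the only non-formal point is that applying two face operators in either order produces the \emph{same} tower $\Twr_M$, so that the two results lie in one and the same summand of~(\ref{Jarusku boli brisko.}). I would organise this by $j-i$ and by which part of~(\ref{Poradmnebolipravepredlokti}) is touched. When the two operators modify disjoint segments of the tower — this covers $j>i+1$, and also the pairs $(0,j)$ and $(i,n)$ with $j\ge 2$, $i\le n-2$ — the fiber-functor axiom of Definition~\ref{Zacina mne zase svedit i leva ruka?} (resp.\ its module version encoded in Definition~\ref{Svedeni asi neprestava.}) guarantees that merging one pair of arrows leaves the fibers of all other arrows and of $\alpha$ untouched, so the towers coincide and the decorating tensors are altered independently; here $d_id_j=d_{j-1}d_i$ is immediate.

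The three overlapping cases are where associativity enters. For $j=i+1$ with $1\le i$ and $i+1\le n-1$, both $d_id_{i+1}$ and $d_id_i$ collapse the consecutive block $T_{i-1}\xleftarrow{f_i}T_i\xleftarrow{f_{i+1}}T_{i+1}\xleftarrow{f_{i+2}}T_{i+2}$ to the single arrow $f_if_{i+1}f_{i+2}$; that the two orders yield the same fibers and induced maps is the coherence~(\ref{Po navratu ze Zimni skoly.}) (equivalently the top-simplicial relation that Definition~\ref{Zacina mne zase svedit i leva ruka?} restores), and that they act identically on $p_i\ot p_{i+1}\ot p_{i+2}$ is the associativity square~(\ref{Uz se prohlidky blizi.}) of $\oP$. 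The pair $(0,1)$ is the same phenomenon at the left end, now also involving the outer factor $t_0\in\oP(T_0)$ folded in by $\gamma_{f_1}$ against $\gamma_{f_1f_2}$ on the other side — again~(\ref{Uz se prohlidky blizi.}), read through its $\gamma_{gf}$ vertex, together with~(\ref{Po navratu ze Zimni skoly.}). The pair $(n-1,n)$ is the analogue at the right end: both $d_{n-1}d_n$ and $d_{n-1}d_{n-1}$ collapse $T_{n-1}\xleftarrow{f_n}T_n\xleftarrow{\alpha}M$, the towers agree by the module coherence~(\ref{Po navratu ze Zimni skoly II.}), and the decorations agree by the $\oP$-module associativity axiom of Definition~\ref{Zitra jedu s Jarkou na chalupu.} applied to $M\xrightarrow{\alpha}T_n$ and $f_n:T_n\to T_{n-1}$. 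Finally, for $\epsilon\partial_1=\epsilon d_0-\epsilon d_1$ one evaluates on $t_0\ot p_1\ot m$ over the tower $T_0\xleftarrow{f_1}T_1\xleftarrow{\alpha}M$: $\epsilon d_0$ gives $\nu_\alpha\big(m,\gamma_{f_1}(p_1,t_0)\big)$ while $\epsilon d_1$ gives $\nu_{f_1\alpha}\big(\nu_{\alpha_{T_0}}(m,p_1),t_0\big)$, and these coincide by that same module axiom (with $g:=f_1$ and $C:=T_0$).

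I expect the main obstacle to be not any individual computation but the uniform bookkeeping of fibers across a rewriting of the whole diagram $\Twr_M$: one must be confident that in every case the towers produced by the two orders of application literally agree as the objects indexing summands of~(\ref{Jarusku boli brisko.}), and it is precisely the fiber-functor and operadic-module coherence diagrams — not merely operad/module associativity — that secure this. Once the indexing towers are matched, the remaining equalities of decorating tensors are exactly the associativity axioms~(\ref{Uz se prohlidky blizi.}) and Definition~\ref{Zitra jedu s Jarkou na chalupu.}, plus the routine sign juggling, and the telescoping argument then yields $\partial_n\partial_{n+1}=0$ and $\epsilon\partial_1=0$.
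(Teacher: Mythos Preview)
Your proposal is correct and is exactly the kind of argument the paper has in mind: the paper's own proof consists of the single sentence ``An exercise on the axioms of operads and their modules,'' and what you have outlined is precisely that exercise carried out, via the simplicial identities $d_id_j=d_{j-1}d_i$ and the operad/module associativity squares~(\ref{Uz se prohlidky blizi.}) and Definition~\ref{Zitra jedu s Jarkou na chalupu.} together with the fiber-functor coherences~(\ref{Po navratu ze Zimni skoly.}) and~(\ref{Po navratu ze Zimni skoly II.}).
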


\begin{proof}
An exercise on the axioms of operads and their modules.
\end{proof}

\begin{definition}
\label{Prestane mi to paleni?}
The augmented chain complex $\epsilon:
\beta_*(\oP,\oM)(M) \to \oM(M)$ is the (un-normalized) {\em bar resolution\/} of
the $\oP$-module $\oM$ at the object $M$ of $\ttM$.
\end{definition}

Suppose that $\oP$ is unital as per Definition~\ref{O vikendu ma byt
  krasne a ja trcim v Mexiku.} and $\oM$ is a unital $\oP$-module. 
For $M \in \ttM$ define linear maps
\begin{equation}
\label{Za hodinu jdu na ocni.}
s_j : \Beta n(M) \to \Beta {n+1}(M),\  0 \leq j \leq n,
\end{equation} 
as follows. Consider an element 
$u \in \Beta n(M)$ in~(\ref{Pisi v tom podivnem dome.})
associated to the tower $\Twr_M$
in~(\ref{Poradmnebolipravepredlokti}). Then modify $\Twr_M$ by inserting
the identity automorphism of $T_j$ to it, which results in the tower
\[
\xymatrix@R=-.1em{T_0&\ar[l]_{f_1} T_1 &\ar[l]_{f_2} \cdots & T_j
  \ar[l]_{f_j} & T_j \ar[l]_{\id} & \ar[l]\cdots &
\ar[l]_{f_{n-1}} T_{n-1} &\ar[l]_(.4){f_n} T_n &\ar[l]_\alpha M
\\
& \vfib & \cdots &\vfib& \vfib&\cdots & \vfib &\vfib & \vfib
\\
& F_1 & \cdots & F_{j}& U_{T_j}& \cdots&F_{n-1}&F_n& N 
}  
\]
with the fiber sequence $(T_0,\Rada F1j,U_{T_j},\Rada F{j+1}n,N)$. Then
$s_j(u) \in  \Beta {n+1}$ is defined as
\[
  s_j(u) := t_0 \ot p_1 \ot \cdots \ot p_j \ot 1_j\ot p_{j+1} \ot \cdots \ot p_n \ot n
\]
where $1_j:= \eta_{U_{T_j}}(1) \in \oP(U_{T_j})$ is given by the unitality
of the operad $\oP$, cf.~\eqref{Nejdriv si musim prinytovat kolo.}. 
We formulate the following analog of
Proposition~\ref{Pres den taje, v noci mrzne.}.

\begin{proposition}
If \/ $\oP$ is unital, 
the family  $\Beta \bullet(M) = \{\Beta n(M)\}_{n\geq
  0}$ of vector spaces
with the operators $d_i$ and~$s_j$ defined above is a simplicial
vector space for each $M \in \ttM$. The piece
$\Beta*(M)$  of the 
bar resolution  is its associated chain complex.
\end{proposition}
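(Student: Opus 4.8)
The second assertion is immediate once the first is proved: the (un‑normalized) Moore complex of a simplicial object in $\Vect$ is, by definition, the given object in each degree equipped with the differential $\sum_{i=0}^{n}(-1)^i d_i$, and this is exactly $\big(\Beta n(M),\pa_n\big)$; so the task reduces to verifying that the operators $d_i$ and $s_j$ obey the five families of simplicial identities listed in Remark~\ref{Dnes mi vypalovali bradavici.}. (The relation $\pa_n\pa_{n+1}=0$ already recorded is a consequence of this, not a shortcut to it.) The plan is to observe first that the combinatorics underlying these identities --- how the fiber sequences of the towers $\Twr_M$ are permuted and merged by $d_i$ and $s_j$ --- is precisely the simplicial structure of the nerve of $\ttO$ enriched by the top face $d_{n+1}$ of Remark~\ref{Dnes mi vypalovali bradavici.}, together with the operadic‑module structure carried by $\ttM$; on that combinatorial level every simplicial identity except the topmost holds automatically, and the fiber‑functor axiom of Definition~\ref{Zacina mne zase svedit i leva ruka?} restores the last. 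What remains is then to carry the operad and module elements through each identity.

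I would treat first the identities in which a degeneracy is not coupled to an adjacent face, namely $d_id_j=d_{j-1}d_i$ for $i<j$, $s_is_j=s_{j+1}s_i$ for $i\le j$, $d_is_j=s_{j-1}d_i$ for $i<j$, and $d_is_j=s_jd_{i-1}$ for $i>j+1$. In each of these the two operations act on disjoint, or properly nested, pieces of the tower, so once the fiber sequences on the two sides are identified one is reduced to two applications of $\gamma$ --- at the ends, of $\gamma_{f_1}$ or of the module action $\nu$ --- which either do not interact at all or interact only through the operad associativity square~\eqref{Uz se prohlidky blizi.}, respectively its module analogue, with the needed matchings of fibers supplied by two instances of the compatibility diagram~\eqref{Po navratu ze Zimni skoly.} and of its module version. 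These are the very manipulations that yield $\pa_n\pa_{n+1}=0$, now read off term by term; the only additional care is the Koszul signs produced by the transpositions in the definition of $d_i$. No unitality hypothesis enters here.

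The remaining identities are $d_js_j=\id=d_{j+1}s_j$, including the boundary instances $d_0s_0=\id$ and $d_{n+1}s_n=\id$; this is where the unitality hypotheses are used, and where I expect the main work to lie. The operator $s_j$ inserts the identity automorphism $\id_{T_j}$, whose fiber is $U_{T_j}$, and places the unit $1_j\in\oP(U_{T_j})$ in the corresponding slot. Removing that identity again by $d_j$, resp.\ $d_{j+1}$, amounts to an operadic composition along the map between fibers induced by $\id_{T_j}$; by functoriality of the fiber functors this induced map is itself an identity automorphism, of $F_j$ resp.\ $F_{j+1}$, and by the ``fiber of the induced map equals the fiber of the original map'' clause of Definition~\ref{Zacina mne zase svedit i leva ruka?} its fiber is again $U_{T_j}$. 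Hence $\gamma$ is a composition along an identity, and the left unitality of $\oP$ (for $d_js_j$) and its right unitality (for $d_{j+1}s_j$), in the form of Definition~\ref{O vikendu ma byt krasne a ja trcim v Mexiku.}, say exactly that $1_j$ acts as a two‑sided unit there, so $s_j(u)$ is carried back to $u$. The instance $d_{n+1}s_n=\id$ is the same argument with $d_{n+1}$ contracting $\id_{T_n}\circ\alpha=\alpha$ and the relevant structure map being the action $\nu$ of $\oP$ on $\oM$; it uses the right unitality of the $\oP$‑module $\oM$ from Definition~\ref{Porad se mi bliska v oku.}, which is why $\oM$ is assumed unital.

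The genuine obstacle in this last step is the bookkeeping guaranteeing that $1_j$ is the correct unit at the object where the composition it meets actually takes place --- that $\eta$ evaluated at $U_{T_j}$ serves as a unit not only of $\oP(U_{T_j})$ for compositions of shape $\gamma_{\id_{T_j}}$ but also of $\oP(U_{F_j})$ for compositions of shape $\gamma_{\id_{F_j}}$. This rests on the fiber‑functor axioms: the equalities $U_{U_T}=U_T$ and $U_{F_j}=U_{T_j}$ follow from functoriality of the $\Fib_S$ (and, when chosen local terminal objects are available, from the simplicial identities recalled in Remark~\ref{Dovoli mi bradavice lyzovat?}), while the argument in the proof of Proposition~\ref{Poleti to zitra?} shows that the unit of a unital operad depends only on the fiber of the relevant identity and is a two‑sided unit for all compositions along identities. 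Once this is in place, every simplicial identity checks out, so $\Beta\bullet(M)$ is a simplicial vector space, and its associated chain complex is by construction the piece $\Beta*(M)$ of the bar resolution.
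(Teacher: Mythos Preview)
Your approach is correct and is precisely what the paper means by its one-line proof ``Direct verification.'' You have correctly located where each hypothesis is used: associativity of $\oP$ and of the $\oP$-action for the non-adjacent identities, left unitality of $\oP$ for $d_js_j=\id$, right unitality of $\oP$ for $d_{j+1}s_j=\id$, and unitality of $\oM$ for $d_{n+1}s_n=\id$; the observation from the proof of Proposition~\ref{Poleti to zitra?} that $\eta_T$ depends only on $U_T$ is indeed what makes $1_j$ the correct unit at $F_j$.

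One inaccuracy to repair: in your treatment of $d_{j+1}s_j$, the composition is \emph{not} along an identity automorphism of $F_{j+1}$. After $s_j$ the arrows in positions $j{+}1$ and $j{+}2$ are $\id_{T_j}$ and $f_{j+1}$, and the instance of~\eqref{Skube mi v prave ruce.} produces the induced map $(f_{j+1})_{T_j}\colon F_{j+1}\to U_{T_j}$ with fiber $F_{j+1}$, not $\id_{F_{j+1}}$. This is exactly the shape of the map $f_T$ in the right-unitality clause of Definition~\ref{O vikendu ma byt krasne a ja trcim v Mexiku.}, so the axiom you invoke is the right one and the conclusion stands; only your description of the map (and hence of its fiber) needs correcting. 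The $d_js_j$ case is as you describe: there the induced map really is $\id_{F_j}$, and left unitality applies once $U_{F_j}=U_{T_j}$ is noted.
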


\begin{proof}
Direct verification. 
\end{proof}

\begin{definition}
\label{Za 70 minut.}
For $M \in \oM$ denote by $B_*(\oP,\oM)(M)$ the normalization of the
simplicial abelian group  $\Beta \bullet(M)$, i.e.\ the quotient of
$\Beta *(M)$ by the images of the degeneracy operators~(\ref{Za hodinu
  jdu na ocni.}). 
The augmented chain complex $\epsilon:
B_*(\oP,\oM)(M) \to \oM(M)$ is the {\em normalized bar resolution\/} of
the $\oP$-module $\oM$ at the object $M$ of $\ttM$. 
\end{definition}

In the rest of this section we assume that the operadic category
$\ttO$ and the left operadic $\ttO$-module $\ttM$ satisfies the weak
blow-up axiom recalled in Section~\ref{Zblaznim se z toho?}.

\begin{proposition}
\label{Dnes mne ty ruce pali mene.}
The  direct sums $\oM \oplus \Beta0$ and $\Beta{n}
\oplus \Beta{n+1}$, $n \geq 0$, are free
left \,$\oP$-modules. 
\end{proposition}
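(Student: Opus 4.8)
The plan is to identify each of the two kinds of direct sum with a free module of the form $\hFree(-)$ constructed in Section~\ref{Zitra pujdu s Jarkou nakupovat bundicku.}, and then to quote Proposition~\ref{Z tech brejli se mi toci hlava.}. The weak blow-up axiom $\WBU$, in force by the standing assumption of this part of the section, will be indispensable at exactly one point. I would first dispose of $\oM\oplus\Beta0$. Spelling out~\eqref{Jarusku boli brisko.} for $n=0$, a summand of $\Beta0(M)$ is $\oP(T_0)\ot\oM(N)$ indexed by a tower $N\fib M\stackrel\alpha\to T_0$, i.e.\ by an arrow $\alpha\colon M\to T_0$ of $\ttM$ with fiber $N=\Gib(\alpha)$. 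Comparing with~\eqref{Uz mam skoro dva tydny ve Stockholmu za sebou.}, this is precisely the non-tautological part of $\hFree(\oM)(M)$, so as a collection $\oM\oplus\Beta0$ is the free module $\hFree(\oM)$, naturally in $M$; hence it is a free $\oP$-module by Proposition~\ref{Z tech brejli se mi toci hlava.}.

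The core of the argument is a natural bijection, for each $M\in\ttM$ and each $n\ge 0$, between the towers $\Twr_M$ of length $n+1$ as in~\eqref{Poradmnebolipravepredlokti} and the pairs consisting of (i) an arrow $\alpha_0\colon M\to T_0$ of $\ttM$ with fiber $G:=\Gib(\alpha_0)$, and (ii) a tower $\Twr_G$ of length $n$ over $G$, such that the fiber sequence $(T_0,F_1,\dots,F_{n+1},N)$ of $\Twr_M$ is $(T_0)$ followed by the fiber sequence $(F_1,\dots,F_{n+1},N)$ of $\Twr_G$. Given $\Twr_M$, I would put $\alpha_0:=f_1\cdots f_{n+1}\alpha$ and pass to fibers over $T_0$: each $f_i\colon T_i\to T_{i-1}$, read as a morphism of $\ttO/T_0$, induces a map between $\Fib(f_1\cdots f_i)$ and $\Fib(f_1\cdots f_{i-1})$, and the fiber-functor axiom~\eqref{Po navratu ze Zimni skoly.}, together with its module version~\eqref{Po navratu ze Zimni skoly II.} for the last, $\ttM$-arrow $\alpha$, identifies the successive fibers of these induced maps with $F_2,\dots,F_{n+1},N$; the outcome is a tower $\Twr_G$ over $G$ whose top object is $\Fib(f_1)=F_1$ and whose fiber sequence is $(F_1,\dots,F_{n+1},N)$. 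Conversely, given such a pair one reconstructs $\Twr_M$ by invoking $\WBU$, in its category and module versions, repeatedly, to undo the passage to fibers one step at a time; the uniqueness clause of $\WBU$ makes this a two-sided inverse of the first assignment and makes the whole correspondence natural in $M$.

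Summing the summands~\eqref{Jarusku boli brisko.} along this bijection then yields
\[
\Beta{n+1}(M)\;\cong\;\bigoplus_{\alpha_0}\bigl(\oP(T_0)\ot_{\alpha_0}\Beta n(G)\bigr),
\]
with $\alpha_0$ ranging over the arrows of $\ttM$ with source $M$, $T_0$ its target and $G=\Gib(\alpha_0)$ its fiber. Adjoining the tautological summand $\Beta n(M)$ and comparing with~\eqref{Uz mam skoro dva tydny ve Stockholmu za sebou.} exhibits $\Beta n(M)\oplus\Beta{n+1}(M)$ as the underlying collection of $\hFree(\Beta n)(M)$, the $\oP$-actions corresponding under this identification; hence $\Beta n\oplus\Beta{n+1}\cong\hFree(\Beta n)$ is a free $\oP$-module by Proposition~\ref{Z tech brejli se mi toci hlava.}.

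The step I expect to be the main obstacle is the structural bijection of the second paragraph, and within it the careful bookkeeping of fibers: one must check that passing to fibers over $T_0$ turns the operadic-category and module axioms~\eqref{Po navratu ze Zimni skoly.} and~\eqref{Po navratu ze Zimni skoly II.} into exactly the assertion that the shorter tower carries the prescribed fiber sequence, and dually that $\WBU$ undoes this uniquely at every stage — this is the one place where the weak blow-up hypothesis cannot be dispensed with. Once the bijection is in place, the naturality in $M$, the compatibility of the two assignments, the verification that the resulting isomorphisms of graded vector spaces intertwine the actions defined after~\eqref{Uz mam skoro dva tydny ve Stockholmu za sebou.}, and the accounting of the Koszul signs coming from the grading are all routine.
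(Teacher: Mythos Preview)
Your proposal is correct and follows essentially the same approach as the paper: both identify $\oM\oplus\Beta0\cong\hFree(\Box\oM)$ directly from~\eqref{Uz mam skoro dva tydny ve Stockholmu za sebou.}, and both establish the key isomorphism $\Beta{n+1}(M)\cong\bigoplus_{\alpha_0}\oP(T_0)\ot_{\alpha_0}\Beta n(G)$ and then add the tautological summand to obtain $\hFree(\Box\Beta n)$. The only cosmetic difference is the direction in which the bijection of towers is constructed first: the paper starts on the right-hand side, takes a pair $(\varpi,\Twr_G)$ and uses $\WBU$ $(n{+}1)$ times to lift $\Twr_G$ to a length-$(n{+}1)$ tower over $M$, declaring the inverse ``easy''; you start on the left-hand side, compose to $T_0$ and pass to fibers using~\eqref{Po navratu ze Zimni skoly.} and~\eqref{Po navratu ze Zimni skoly II.}, invoking $\WBU$ only for the inverse. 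Your version has the mild advantage of making explicit why the fiber sequences match, which is exactly what the paper leaves to the reader.
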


\begin{proof}
It follows directly from the definition of  $\Beta
0$ and~(\ref{Uz mam skoro dva tydny ve Stockholmu za sebou.}) 
that $\oM \oplus \Beta 0 \cong \Free(\Box \oM)$. It thus remains 
to prove that
\[
\Beta n (N) \oplus \Beta {n+1}(N)  \cong \Free(\Box \Beta n)(N),
\]
for each $n \geq 0$ and $N \in \ttM$.
Invoking~(\ref{Uz mam skoro dva tydny ve Stockholmu za sebou.}) again, we
conclude that it suffices to show that
\begin{equation}
\label{Dneska je vymena pradla.}
\Beta {n+1}(N) \cong \bigoplus_\varpi \big(\oP(S)\ot_\varpi \Beta n (M)\big),
\end{equation}
where the direct sum is taken over all arrows $\varpi: N \to S$, and where
$M$ is the fiber of $\varpi$. 
Consider the component
\[
\oP(T_0) \ot \oP(F_1) \ot \cdots \ot \oP(F_n) \ot \oM(N)
\]
in the direct sum~(\ref{Jarusku boli brisko.}) defining
$\beta_n(\oP,\oM)(M)$  
associated to the tower $\Twr_M$
in~(\ref{Poradmnebolipravepredlokti}). Using $\WBU$
\hbox{$(n+1)$}-times,
we embed $\Twr_M$ as the tower of morphisms between the fibers into the diagram
\[
\xymatrix@R=-.1em{T_0&\ar[l]_{f_1} T_1 &\ar[l]_{f_2} \cdots &
\ar[l]_{f_{n-1}} T_{n-1} &\ar[l]_(.4){f_n} T_n &\ar[l]_\alpha M
\\
\Afib & \Afib & \cdots &\Afib& \Afib& \Afib
\\
S_1\ar@/_1.9em/[rrrrrdddddd]^{g_1}& \ar@/_.9em/[rrrrdddddd]^{\varpi_{2}} S_2\ar[l]_(.4){g_2} & \ar[l]_(.4){g_3}
\cdots &\ar@/_.9em/[rrdddddd]^{\varpi_{n}} S_{n}\ar[l]_(.4){g_n}
& S_{n+1}\ar[l]_(.5){g_{n+1}}\ar@/_.9em/[rdddddd]^{\varpi_{n+1}}
& N\ar[dddddd]^\varpi \ar[l]_(.4){\beta}
\\
\\
\\
\\
\\
\\
 &  &  & & & S 
}  
\]
which contains the tower
\[
\varpi(\Twr_M): \
\xymatrix@1{S & \ar[l]_(.4){g_1}
S_1&  S_2\ar[l]_(.4){g_2} & \ar[l]_(.4){g_3}
\cdots & S_{n}\ar[l]_(.4){g_n}
& S_{n+1}\ar[l]_(.5){g_{n+1}}
& N \ar[l]_(.4){\beta}
}
\]
with the fiber sequence $(S,T_0,\Rada F1n,N)$.
We may thus interpret the summand
\[
\oP(S) \ot  \oP(T_0) \ot \oP(F_1) \ot \cdots
\ot \oP(F_n) \ot \oM(N) \subset \oP(S)\ot_\varpi \Beta n (M)
\] 
in the right hand side of~(\ref{Dneska je vymena pradla.}) as
belonging to the
component of $\Beta{n+1}(N)$ associated to the tower $\varpi(\Twr_M)$. It
is easy to show that the map
\[
\bigoplus_\varpi \big(\oP(S)\ot_\varpi \Beta n (M)\big) \longrightarrow
\Beta {n+1}(N)
\]
thus defined is an isomorphism.
\end{proof}

Let us move to the issue of acyclicity of the bar resolution.
For an object $\Upsilon \in \ttM$ denote by $\Upsilon/\ttO$ the
category whose objects are arrows $\alpha : \Upsilon \to X$, $X \in
\ttO$, and morphisms $\alpha' \to \alpha''$ are commutative diagrams 
\[
\xymatrix@R1em@C1em{&\Upsilon  \ar[ld]_{\alpha'}\ar[rd]^{\alpha''}  &
\\
X' \ar[rr] &&\ X''
}
\]
where the horizontal arrow is a morphism of $\ttO$. It will turn out
that the bar resolution is acyclic at objects $\Upsilon$ of $\ttM$ 
with the property that\label{Dovoli mi to pocasi v nedeli?}
\begin{itemize}
\item [(P1)] 
the category $\Upsilon/\ttO$ has a global terminal object $\Upsilon \stackrel!
\to \odot$ such that 
\item [(P2)]
the fiber of  $\Upsilon \stackrel!
\to \odot$ is $\Upsilon$ and the fiber functor $\Fib:\ttO/\odot \to \ttO$
is the domain functor.
\end{itemize}
The terminality in (P1) means that each
arrow $\alpha: \Upsilon \to X$ in $\ttM$ is uniquely left divisible:
\[
\xymatrix{\Upsilon \ar[r]^\alpha \ar@/_1.9em/[rr]^{!} &
  \ar@{-->}[r]^{\exists \ !} X& \odot
}.
\]
By (P2), the fiber of the unique arrow $X \stackrel!\to
\odot$ is~$X$. 

Assume that $\oP$ is left unital the sense of Definition~\ref{O
  vikendu ma byt krasne a ja trcim v Mexiku.} and $\oM$ a unital
$\oP$-module as in Definition~\ref{Porad se mi bliska v oku.}. Since
the fiber of the identity $\id : \odot \to \odot$ is $\odot$, by the
left unitality of $\oP$ there
exists a morphism $\eta_\odot : \unit \to \oP(\odot)$ for which the diagram
\[
\xymatrix{\oP(\odot) \ot \oP(X) \ar[r]^(.62){\gamma_{\unit}} & \oP(X) \ar@{=}[d]
\\
\ar[u]^{\eta_{\odot} \ot \id}
\hbox{\hskip 2em $\unit \ot \oP(X)$}\ar[r]^(.6)\cong & \oP(X)
}
\]
commutes.
Likewise, the unitality of
$\oM$ implies the commutativity of
\[
\xymatrix{\oM(\Upsilon) \ot \oP(\odot) \ar[r]^(.6){\nu_!} & \oM(\Upsilon) \ar@{=}[d]
\\
\hskip -1.4em \oM(\Upsilon) \ot R  \ar[r]^\cong \ar[u]_{\id \ot
  \eta_\odot}  
&\ \oM(\Upsilon).
}
\]

We are finally able to formulate the operadic version of
Theorem~\ref{Porad se mi blyska.}. Recall that we assume that
the operadic category
$\ttO$ and the left operadic $\ttO$-module $\ttM$ satisfy the weak
blow-up axiom, $\oP$ is a unital $\ttO$-operad in $\Vect$ and $\oM$ a
unital $\oP$-module.

\begin{theoremA}
\label{Veta A}
The augmented chain complex 
$\beta_*(\oP,\oM)(\Upsilon) \stackrel\epsilon\to \oM(\Upsilon)$ 
is acyclic whenever
$\Upsilon\in \ttM$ fulfills properties (P1)--(P2) above.
If \/ $\Upsilon$ is rigid in the sense of Definition~\ref{Opet budu
  nytovat podvozek.}, then
$\beta_n(\oP,\oM)(\Upsilon)$ is 
a~piece of a free unital \/ $\oP$-module  for each  $n \geq 0$. An obvious
similar statement holds also for the normalized bar construction $B_*(\oP,\oM)$.
\end{theoremA}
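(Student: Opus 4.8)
The plan is to establish acyclicity by writing down an explicit contracting homotopy on the augmented complex, modelled on the classical ``extra degeneracy'' $s_{-1}$ that prepends the unit, and then to read off the freeness assertion from Propositions~\ref{Dnes mne ty ruce pali mene.} and~\ref{Pozitri mi prijdou zapojit novou mycku.} of the previous section, with no additional work.

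\textbf{The contraction.} Fix $\Upsilon$ satisfying (P1)--(P2). For every object $X$ of $\ttO$ admitting an arrow $\Upsilon\to X$ in $\ttM$, the terminality of $\Upsilon\stackrel!\to\odot$ in $\Upsilon/\ttO$ supplies a unique morphism $!_X:X\to\odot$ of $\ttO$, and by (P2) its fiber is $X$ itself. Define $h_{-1}:\oM(\Upsilon)\to\Beta0(\Upsilon)$ and $h_n:\Beta n(\Upsilon)\to\Beta{n+1}(\Upsilon)$ by sending the summand indexed by a tower $\Twr_\Upsilon$ as in~(\ref{Poradmnebolipravepredlokti}), with fiber sequence $(T_0,F_1,\dots,F_n,N)$, to the summand indexed by the tower obtained by prefixing $\odot\stackrel{!_{T_0}}\longleftarrow T_0$ (whose fiber sequence is $(\odot,T_0,F_1,\dots,F_n,N)$), on elements by
\[
h_n\big(t_0\ot p_1\ot\cdots\ot p_n\ot m\big):=\eta_\odot(1)\ot t_0\ot p_1\ot\cdots\ot p_n\ot m
\]
and $h_{-1}(m):=\eta_\odot(1)\ot m$, where $\eta_\odot:\unit\to\oP(\odot)$ is the structure map attached to $\odot=\Fib(\id_\odot)$ by the left unitality of $\oP$ in the family~(\ref{Nejdriv si musim prinytovat kolo.}).

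\textbf{The verification and its main difficulty.} What has to be checked are the standard ``extra degeneracy'' relations $\epsilon h_{-1}=\id$, $d_0h_n=\id$, $d_1h_0=h_{-1}\epsilon$, and $d_ih_n=h_{n-1}d_{i-1}$ for $1\le i\le n+1$; these force $\epsilon h_{-1}=\id$ and $\pa h+h\pa=\id$ in the usual way, with the $(-1)^i$ and Koszul signs of the bar differential working out formally. Concretely: $d_0h_n=\id$ is the right unitality of $\oP$ applied with the morphism $!_{T_0}$, which gives $\gamma_{!_{T_0}}(t_0,\eta_\odot(1))=t_0$ (here one uses that by (P2) the fiber of $!_{T_0}$ is $T_0$ and the induced map $(!_{T_0})_\odot$ is $!_{T_0}$ itself, since $\Fib_\odot$ is the domain functor); $\epsilon h_{-1}=\id$ is precisely the commutativity of the module-unitality square for $\nu_!$ displayed just before the theorem; and the identities $d_ih_n=h_{n-1}d_{i-1}$ reduce, via (P1)--(P2), to the bookkeeping fact that for a tower $\Twr_\Upsilon$ one has $!_{T_0}\circ f_1=!_{T_1}$ (uniqueness of arrows into $\odot$) and that the fiber functor over $\odot$, both on $\ttO$ and on the module $\ttM$, is the domain functor, so that contracting an inner edge of the prefixed tower, or contracting the corresponding edge of $\Twr_\Upsilon$ and then prefixing, yield the same summand. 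This last step---tracking how the prepended datum $\odot\stackrel{!_{T_0}}\longleftarrow T_0$ interacts with each face operator $d_i$ and with the module action in $d_{n+1}$, and confirming that every composite arising is again the canonical arrow to $\odot$---is the only thing that needs care; everything else is formal. (Note the homotopy need not be $\oP$-linear, which is irrelevant for acyclicity.)

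\textbf{Freeness and the normalized version.} For the second assertion no new argument is required: Proposition~\ref{Dnes mne ty ruce pali mene.} identifies $\oM\oplus\Beta0\cong\Free(\Box\oM)$ and $\Beta n\oplus\Beta{n+1}\cong\Free(\Box\Beta n)$ as free unital left $\oP$-modules, so $\Beta n(\oP,\oM)(\Upsilon)$ is a direct summand of the value at $\Upsilon$ of a free unital $\oP$-module; when $\Upsilon$ is rigid in the sense of Definition~\ref{Opet budu nytovat podvozek.}, Proposition~\ref{Pozitri mi prijdou zapojit novou mycku.} turns that value into the honest direct sum $\bigoplus_\alpha\big(\oP(T)\ot_\alpha\oE(G)\big)$ over arrows $\alpha:\Upsilon\to T$, and matching this with the isomorphism~(\ref{Dneska je vymena pradla.}) exhibits $\Beta n(\oP,\oM)(\Upsilon)$ itself as such a piece. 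Finally, $B_*(\oP,\oM)(\Upsilon)$ is the normalized (Moore) complex of the simplicial vector space $\Beta\bullet(\Upsilon)$, hence chain-homotopy equivalent to $\Beta*(\Upsilon)$, so acyclicity transfers verbatim; and since the degenerate subcomplex is a summand of a free module in the same way, the freeness statement carries over to $B_*$ as well.
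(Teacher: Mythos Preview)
Your proof is correct and follows essentially the same line as the paper's own argument: the contracting homotopy is exactly the paper's ``prepend $1_\odot$'' map $h_n(u):=1_\odot\ot u$ attached to the extended tower $h(\Twr_\Upsilon)$, and the freeness claim is extracted from the same isomorphism~(\ref{Dneska je vymena pradla.}) combined with the rigidity formula~(\ref{Vcera prijel Benoit Fresse.}). The paper simply asserts that the homotopy identities are ``easy to verify,'' whereas you spell out which unitality axiom handles each face; this is a welcome elaboration but not a different idea.
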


\begin{proof}
It follows from the definition of  $\Beta
0$ and~(\ref{Vcera prijel Benoit
  Fresse.}) that $\Beta 0(\Upsilon) \cong \Free(\Box \oM)(\Upsilon)$. We need
to prove that
\[
\Beta {n+1}(\Upsilon)  \cong \Free(\Box \Beta n)(\Upsilon),
\]
for each $n \geq 0$ which, 
by~(\ref{Vcera prijel Benoit Fresse.}), amounts to proving that
\begin{equation}
\label{Dneska je vymena pradla I.}
\Beta {n+1}(\Upsilon) \cong \bigoplus_\varpi \big(\oP(S)\ot_\varpi \Beta n (M)\big),
\end{equation}
where the direct sum is taken over all $\varpi: \Upsilon \to S$, and where
$M$ is the fiber of $\varpi$. But this isomorphism was established in
the proof of Proposition~\ref{Dnes mne ty ruce pali mene.},
cf.~(\ref{Dneska je vymena pradla.}) with $N = \Upsilon$. 

To prove the acyclicity of the augmented complex 
$\Beta*(\Upsilon) \stackrel \epsilon\to \oM(\Upsilon)$, we 
construct  the contracting homotopies $h: \oM(\Upsilon) \to \Beta0 (\Upsilon)$ and  $h_n: \Beta n(\Upsilon)
\to \Beta{n+1} (\Upsilon)$, $n \geq 0$, as follows. 
For $u \in \oM(\Upsilon)$ we put
\[
h(u) := 1_\odot \ot u  \in \oP(\odot) \ot_! \oM(\Upsilon)
\in \Beta0(\Upsilon).
\]

To construct $h_n$ for $n \geq 0$, we consider a tower \,$\Twr_\Upsilon$ as in~(\ref{Poradmnebolipravepredlokti}) with $M = \Upsilon$
and a related element \hbox{$u \in \Beta n(\Upsilon)$} in~(\ref{Pisi v tom podivnem
  dome.}). Since $\Upsilon \in \pi_0(\odot)$, all $T_i$'s and $T_0$ in
particular belong to $\pi_0(\odot)$, so there exists a unique $!: T_0 \to
\odot$, hence $\Twr_\Upsilon$ can be uniquely extended to
\[
h(\Twr_\Upsilon): \
\xymatrix@R=-.1em{\odot&T_0 \ar[l]_{!}&\ar[l]_{f_1} T_1 &\ar[l]_{f_2} \cdots &
\ar[l]_{f_{n-1}} T_{n-1} &\ar[l]_(.4){f_n} T_n &\ar[l]_\alpha M
\\
& \vfib & \vfib & \cdots &\vfib& \vfib& \vfib
\\
& T_0 & F_1 & \cdots & F_{n-1}& F_n& N
}  
\]
with the associated 
fiber sequence $(\odot,T_0,\Rada F1n, N)$. We finally define
$h_n(u)$ to be $1_\odot \ot u$ in the component of $\Beta{n+1} (\Upsilon)$
corresponding to the tower $h(\Twr_\Upsilon)$. The desired property of 
the contracting homotopies for $h,h_0,h_1,\ldots$ constructed above is 
easy to verify. 
\end{proof}

We note that the right unitality of $\oP$ only is sufficient for the
acyclicity of $\beta_*(\oP,\oM)(\Upsilon) \stackrel\epsilon\to
\oM(\Upsilon)$. Theorem~A has the following obvious 

\begin{corollary}
For each $\Upsilon$ fulfilling (P1)--(P2) above,  one has 
\[
H_0(\Beta*) (\Upsilon) = \frac{\oM(\Upsilon)}{{\rm Span}\{ \nu_\alpha(n,x)\}},
\]
where \,${\rm Span}\{ \nu_\alpha(n,x)\} \subset \oM(\Upsilon)$ is the
subspace spanned by all $\nu_\alpha(n,x)$'s with $\alpha : \Upsilon \to X$,
$n \in \oM(N)$ and $x \in \oP(X)$, where $N$ is the fiber of
$\alpha$. The higher homology of $\Beta*(\Upsilon)$ is trivial.
\end{corollary}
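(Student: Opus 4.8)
The statement is a transcription of Theorem~A into homological bookkeeping, so the plan is short. I would first pin down the two sides. Regard $\oM(\Upsilon)$ as sitting in homological degree $0$ and $\Beta n(\Upsilon)$ in degree $n+1$, so that the augmentation $\epsilon:\Beta0(\Upsilon)\to\oM(\Upsilon)$ — which is a genuine differential, $\epsilon\partial_1=0$, by the proposition preceding Definition~\ref{Prestane mi to paleni?} — is the map landing in degree $0$. Then $H_0(\Beta*)(\Upsilon)$ is by definition ${\rm coker}\,\epsilon$, while the higher homology groups are $\ker\epsilon/{\rm im}\,\partial_1$ in degree $1$ and $\ker\partial_n/{\rm im}\,\partial_{n+1}$ in degree $n\ge2$. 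Reading $\epsilon$ off from its defining formula $\epsilon(t_0\ot n)=\nu_\alpha(n,t_0)$ — the sum running over all arrows $\alpha:\Upsilon\to X$ of $\ttM$, with $N$ the fibre of $\alpha$, $n\in\oM(N)$ and $t_0\in\oP(X)$ — shows that ${\rm im}\,\epsilon$ is exactly the subspace $\Span\{\nu_\alpha(n,x)\}\subseteq\oM(\Upsilon)$ of the statement; hence, with no further input, $H_0(\Beta*)(\Upsilon)={\rm coker}\,\epsilon=\oM(\Upsilon)/\Span\{\nu_\alpha(n,x)\}$.

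It then remains to see that the higher homology vanishes, and this is immediate from Theorem~A. Since $\Upsilon$ fulfils (P1)--(P2), that theorem asserts that the augmented complex $\cdots\to\Beta1(\Upsilon)\stackrel{\partial_1}\to\Beta0(\Upsilon)\stackrel\epsilon\to\oM(\Upsilon)\to0$ is acyclic, i.e.\ exact; exactness at $\Beta0(\Upsilon)$ gives $\ker\epsilon={\rm im}\,\partial_1$ and exactness at $\Beta n(\Upsilon)$, $n\ge1$, gives $\ker\partial_n={\rm im}\,\partial_{n+1}$, which says precisely that all the groups listed above are zero. Combined with the degree-zero computation, this is the corollary. (If one prefers, exactness also shows $\epsilon$ surjective, so under (P1)--(P2) the quotient $\oM(\Upsilon)/\Span\{\nu_\alpha(n,x)\}$ is in fact $0$ and the whole augmented complex is a free acyclic resolution of $\oM(\Upsilon)$.)

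I do not anticipate a real obstacle: all of the substance — the contracting homotopy assembled from the unit $1_\odot\in\oP(\odot)$, which uses (P1)--(P2) together with the left unitality of $\oP$ and the unitality of $\oM$, and the $\WBU$-driven identification $\Beta{n+1}(\Upsilon)\cong\bigoplus_\varpi\bigl(\oP(S)\ot_\varpi\Beta n(M)\bigr)$ of Proposition~\ref{Dnes mne ty ruce pali mene.} — has already been established in the proof of Theorem~A, and the corollary merely repackages it. The one point that needs care is the indexing convention behind the symbol $H_0(\Beta*)(\Upsilon)$, namely that it is the homology at the $\oM(\Upsilon)$-term of the augmented complex, so that the subspace divided out is ${\rm im}\,\epsilon$ and nothing larger.
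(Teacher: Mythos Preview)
Your proof is correct and matches the paper's approach exactly: the paper gives no argument beyond declaring the corollary an ``obvious'' consequence of Theorem~A, and you have faithfully unpacked that deduction. Your care with the indexing convention for $H_0$ and your parenthetical remark that under (P1)--(P2) the quotient $\oM(\Upsilon)/\Span\{\nu_\alpha(n,x)\}$ is in fact zero (since acyclicity forces $\epsilon$ to be surjective) are both apt observations about how the statement is phrased.
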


\begin{remark}
If $\ttO$ is the terminal operadic category with one object $\odot$,
and the left $\ttO$-module $\ttM$ has of one object $\star$ and one arrow $\star \to
\odot$ as in Example~\ref{Je to k zblazneni to blyskani.}, 
then the above operadic constructions reduce to the
classical machinery \`a la MacLane~\cite{Homology} recalled at the
beginning of this section. 
\end{remark}

The following kind of functoriality has no analog in classical
homological algebra.

\begin{proposition}
\label{Jak to vsechno dopadne?}
Let $(\Phi,\Psi) : (\ttO',\ttM') \to  (\ttO'',\ttM'')$ be a morphism
of operadic left modules, let $\oP$ be an $\ttO''\!$-ope\-rad, $\Phi^*(\oP)$
its 
restriction along  \/  $\Phi$, $\oM$ a
$\oP$-module and  \/  $\Psi^*(\oM)$ its restriction along  \/ $(\Phi,\Psi)$.
Then, for each $M \in \ttM'$, there exists a natural chain map
\[
\beta_*(\Phi,\Psi)(M) :\beta_*\big({\Phi^*(\oP)},{\Psi^*(\oM)}\big)(M) \to  \Beta*\big(\Psi(M)\big)
\]
such that the diagram 
\[
\xymatrix@C=5em{\beta_*\big({\Phi^*(\oP)},{\Psi^*(\oM)}\big)(M)
  \ar[d]_{\beta_*(\Phi,\Psi)(M) } \ar[r] & \ar@{=}[d]
  \Psi^*(\oM)(M)
\\
\Beta*\big(\Psi(M)\big) \ar[r] &\ \oM\big(\Psi(M)\big),
}
\]
in which the horizontal arrows are the augmentations, commutes.
\end{proposition}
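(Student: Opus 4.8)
The plan is to exhibit $\beta_*(\Phi,\Psi)(M)$ as a direct sum, over the towers~(\ref{Poradmnebolipravepredlokti}), of tautological identifications of their associated summands in~(\ref{Jarusku boli brisko.}).

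First I would show that $(\Phi,\Psi)$ transports towers over $M$ to towers over $\Psi(M)$. Given a tower $\Twr_M$ over $M \in \ttM'$ as in~(\ref{Poradmnebolipravepredlokti}), with $\Rada f1n$ morphisms of $\ttO'$, arrow $\alpha : M \to T_n$ in $\ttM'$, and fiber sequence $(T_0,\Rada F1n,N)$, I define the tower $\Phi_*(\Twr_M)$ over $\Psi(M)$ whose horizontal row is
\[
\Phi(T_0) \xleftarrow{\Phi(f_1)} \Phi(T_1) \xleftarrow{\Phi(f_2)} \cdots \xleftarrow{\Phi(f_n)} \Phi(T_n) \xleftarrow{\ \alpha_*\ } \Psi(M),
\]
where $\alpha_*$ is the arrow assigned to $\alpha$ by $\Psi$. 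Because $\Phi$ is an operadic functor it commutes with the fiber functors, hence $\Fib(\Phi(f_i)) = \Phi(F_i)$; because $(\Phi,\Psi)$ commutes with the fiber morphisms, $\Gib(\alpha_*) = \Psi(N)$. Thus $\Phi_*(\Twr_M)$ has fiber sequence $(\Phi(T_0),\Phi(F_1),\dots,\Phi(F_n),\Psi(N))$. Since the fiber functors and fiber morphisms are functors, $\Phi$ moreover carries every induced map between fibers occurring in the instances of~(\ref{Po navratu ze Zimni skoly.}), (\ref{Po navratu ze Zimni skoly II.}) attached to $\Twr_M$ — in particular those in~(\ref{Skube mi v prave ruce.}) and~(\ref{Porad mi skube v prave ruce.}) — to the corresponding induced maps for $\Phi_*(\Twr_M)$; and $\Phi_*$ is compatible with the tower operations, e.g.\ $\Phi_*(d_i\Twr_M) = d_i\Phi_*(\Twr_M)$, since $d_i$ on towers merely replaces two consecutive arrows by their composite ($f_if_{i+1}$ for $1\le i\le n-1$, $f_n\alpha$ for $d_n$) or deletes the leftmost object (for $d_0$), and $\Phi$ respects composition and the identity $(f_n\alpha)_*=\Phi(f_n)\,\alpha_*$.

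Next, by Definitions~\ref{Tento tyden byl Dominik v Praze.} and~\ref{Je to tak na 50 procent.} one has $\Phi^*(\oP)(A)=\oP(\Phi(A))$, $\Psi^*(\oM)(m)=\oM(\Psi(m))$, and the structure operations $\gamma^{\Phi^*(\oP)}_h$, $\nu^{\Psi^*(\oM)}_\beta$ \emph{are} $\gamma^{\oP}_{\Phi(h)}$, $\nu^{\oM}_{\beta_*}$. Hence the summand of $\beta_n(\Phi^*(\oP),\Psi^*(\oM))(M)$ indexed by $\Twr_M$, namely $\Phi^*(\oP)(T_0)\ot\cdots\ot\Phi^*(\oP)(F_n)\ot\Psi^*(\oM)(N)$, coincides with the summand of $\Beta n(\Psi(M))$ indexed by $\Phi_*(\Twr_M)$. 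I would then let $\beta_n(\Phi,\Psi)(M)$ be the identity on each such pair of summands; this is a well-defined linear map (it is the direct sum over source summands, even though several towers may have the same image).

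It remains to check that $\beta_*(\Phi,\Psi)(M)$ commutes with each face operator $d_0,\dots,d_n$ and with the augmentation $\epsilon$. For $1\le i\le n-1$ this follows from $\Phi_*(d_i\Twr_M)=d_i\Phi_*(\Twr_M)$ together with the fact that the operadic composition contracting $p_i\ot p_{i+1}$, being that of $\Phi^*(\oP)$, equals the $\oP$-composition along the $\Phi$-image of the subdiagram of~(\ref{Skube mi v prave ruce.}); the cases of $d_0$, $d_n$ and $\epsilon$ are identical, using instead the module action $\nu$, diagram~(\ref{Porad mi skube v prave ruce.}), and again $(f_n\alpha)_*=\Phi(f_n)\,\alpha_*$. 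In particular $\epsilon(\beta_0(\Phi,\Psi)(M)(t_0\ot n))=\nu^{\oM}_{\alpha_*}(n,t_0)=\epsilon(t_0\ot n)$ inside $\oM(\Psi(M))=\Psi^*(\oM)(M)$, which is exactly the asserted commuting square, its right-hand vertical being this last identification. Functoriality of $(\Phi,\Psi)\mapsto\beta_*(\Phi,\Psi)(M)$ — identities to identities, composites to composites — is immediate since $\Phi_*$ is visibly functorial on towers, and this is the relevant naturality. I expect the only point needing care to be the bookkeeping in the first step: once $\Phi_*$ is recognized as a functor on towers preserving fiber sequences, the induced maps between fibers, and the tower operations, all the remaining verifications are purely formal.
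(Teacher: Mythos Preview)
Your proposal is correct and follows essentially the same approach as the paper: define the map by sending the summand of $\beta_n\big(\Phi^*(\oP),\Psi^*(\oM)\big)(M)$ indexed by a tower $\Twr_M$ identically onto the summand of $\Beta n\big(\Psi(M)\big)$ indexed by the image tower under $(\Phi,\Psi)$, then observe that the required compatibilities hold. The paper's proof merely writes out the image tower and says ``It is simple to check that the morphism thus constructed has the desired properties''; you have spelled out those checks (preservation of fiber sequences, compatibility with the $d_i$ and with $\epsilon$, functoriality) explicitly, which is a welcome elaboration but not a different argument.
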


\begin{proof}
The component 
\[
\Phi^*(\oP)(T_0) \ot \Phi^*(\oP)(F_1) \ot \cdots \ot \Phi^*(\oP)(F_n) \ot \Psi^*(\oM)(N)
\]
of $\beta_*\big({\Phi^*(\oP)},{\Psi^*(\oM)}\big)(M)$ corresponding to
the tower in~(\ref{Poradmnebolipravepredlokti}) is mapped to the component 
\[
\oP\big(\Phi\big(T_0)\big) \ot \oP\big(\Phi(F_1)\big) \ot \cdots 
\ot \oP\big(\Phi(F_n)\big) \ot \oM\big(\Psi(N)\big)
\]
of $\Beta*\big(\Psi(M)\big)$ corresponding to the tower
\[
\xymatrix@R=-.1em@C=3em{\Phi(T_0)&\ar[l]_{\Phi(f_1)} \Phi(T_1) &\ar[l]_(.4){\Phi(f_2)} \cdots &
\ar[l]_(.6){\Phi(f_{n-1})} \Phi(T_{n-1}) &\ar[l]_(.45){\Phi(f_n)}
\Phi(T_n) &\ar[l]_{\Psi(\alpha)}\ \Psi(M)\ .
\\
& \vfib & \cdots &\vfib& \vfib& \vfib
\\
& \Phi(F_1) & \cdots &\Phi(F_{n-1})& \Phi(F_n)& \Psi(N)
}  
\]
It is simple to check that the morphism thus constructed has the
desired properties.
\end{proof}

\part{Fields and blobs}

\section{Basic notions}
\label{Utopil jsem logger in telefon, ale vzpamatovali se.}

We will deal with manifolds, their boundaries, 
embeddings, \&c. The precise meanings of these nouns  will depend on the setup in
which we choose to work -- manifolds could be topological, smooth,
piecewise linear, with some additional structures, \&c. Since
all constructions below are of combinatorial and/or algebraic
nature, we allow ourselves to be relaxed about the
nomenclature; compare the intuitive approach in the related sections
of~\cite{Walker}. We reserve $d$ for a~non-negative integer.

\begin{definition}
\label{Clarifica me pater} 
A {\em system of fields\/} is a rule that to each
manifold $X$ of dimension $\leq d\+1$ assigns a set $\C(X)$. This
assignment should satisfy properties listed
e.g.~in~\cite[Section~2]{blob}. We will in particular need the
following:
\begin{itemize}
\item[(i)]
For each codimension-zero submanifold $Z$ of $\pa X$ one has a
functorial 
restriction \[
\C(X) \ni c  \mapsto c|_Z \in \C(Z).
\]
\item[(ii)]
Let $X' \sqcup_Z X''$ be a manifold obtained by glueing manifolds
$X'$ and $X''$ along a common piece~$Z$ of their boundaries, 
and $c' \in \C(X')$, resp.~$c'' \in \C(X'')$ be
fields whose restrictions to $Z$ agree. Then $c'$ and $c''$ can be
glued into a field $c' \sqcup_Z c'' \in \C(X' \sqcup_Z X'')$ which restricts to
the original fields on $X'$ resp.~$X''$.
\end{itemize}
\end{definition}

We will denote by $\C(X;c)$ the subset of $\C(X)$ consisting of fields
that restrict to $c \in \C(Z)$. We will always be in the situation when $Z$
in~(ii) is closed, so we will not need `glueing with corners'
described in~\cite[Section~2]{blob}. We allow $Z$ to be empty, in
which case we denote the result of the glueing by $c' \sqcup c''\in
\C(X' \sqcup X'')$.  Standard examples of fields are
$\C(X)$ the set of maps from~$X$ to some fixed space $B$, or $\C(X)$
the set of equivalence classes of $G$-bundles with connection over~$X$.

\begin{definition}
\label{Te lucis ante terminum}
Let $X$ be a $(d\!+\!1)$-dimensional, not necessarily connected, manifold,
with the (possibly empty) boundary $\pa X$ and the interior
$\Int X$.
A {\em blob\/} \,in $X$ is the image  $D$ of the 
standard closed $(d\!+\!1)$-dimen\-sional ball ${\mathbb D}^{d+1}
\subset {\mathbb R}^{\times {(d+1)}}$
embedded in $X$ in so that either
\begin{itemize}
\item[(i)]
$D \subset \Int X$, and $X \setminus \Int D$ is a 
$(d\!+\!1)$-dimensional
manifold with the boundary $\pa X \cup \pa D$, or
\item[(ii)]
$D$ is one of the connected components of $X$.
\end{itemize}
A {\em configuration of blobs\/} in $X$ is a nonempty unordered finite set
$\fD = \{\Rada D1r\}$ of pairwise disjoint blobs in $X$.
\end{definition}

We will sometimes use $\fD$ also to denote the union  $\bigcup \fD
:=  \bigcup_{i=1}^r D_i$ if the meaning is clear from the context. It
is a manifold with the
boundary $\pa \fD := \bigcup_{i=1}^r \pa D_i$ and the interior
$\Int \fD := \bigcup_{i=1}^r \Int D_i$, and $X \setminus
\Int \fD$ is a $(d\+1)$-dimensional manifold with the
boundary $\pa X \cup \pa \fD$. If
some of the blobs in $\fD$ happen to be some of 
the components of~$X$, then the
corresponding components of~$X
\setminus \Int \fD$ are $d$-dimensional 
embedded spheres, interpreted
as degenerate $(d\+1)$-dimensional manifolds with empty interiors.

In the rest of this paper we assume that the space of fields on
codimension zero submanifolds of $X$ is {\em enriched\/} in $\Vect$. This is
the case e.g.\ when the fields come from a $(d\!+\!1)$-category whose spaces
of top-dimensional cells are linearly enriched, cf.~\cite[Subsection~2.2]{blob}.

\begin{definition}
\label{zacina i leve}
A {\em local relation\/} is a collection of subspaces $\loc=\big\{\loc(D;c)
\subset \C(D;c)\big\}$ specified for any blob $D \subset X$ and any field $c \in
\C(\pa D)$, which is an ideal in
the following sense.

Suppose we are given blobs $D'$ and $D$ such that $\Int{D} \supset
D'$, and a field
$c \sqcup c' \in\C(\pa D \sqcup \pa D')$.
Then for any local relation 
 $u \in
  \loc(D';c')$ and any field $r \in \C(D \setminus
  \Int{D'};c \sqcup c')$, the glueing $u\sqcup _{\pa D'}\! r$ is a local
  relation in $\loc(D;c)$.
\end{definition}

It is a simplified version of \cite[Definition 2.3.1]{blob},
sufficient for our
purposes. The configuration of balls and fields in
Definition~\ref{zacina i leve} is depicted in the
following schematic picture:
\[
\psscalebox{1.0 1.0} 
{
\begin{pspicture}(0,-1.82)(3.6,1.82)
\definecolor{colour0}{rgb}{0.92156863,0.83137256,0.83137256}
\pscircle[linecolor=black, linewidth=0.04, fillstyle=solid, dimen=outer](1.8,-0.02){1.8}
\pscircle[linecolor=black, linewidth=0.04, fillstyle=solid,fillcolor=colour0, dimen=outer](1.6,-0.22){0.8}
\rput[bl](3.0,1.5){\scriptsize $\partial D$}
\rput[bl](2.3,0.38){\scriptsize $\partial D'$}
\rput[bl](2.6,-0.82){$r$}
\rput[bl](1.5,-.3){$u$}
\end{pspicture}
}
\]

\begin{definition}
\label{Vydrzi to jeste 14 dni?}
For local relations as in Definition~\ref{zacina i leve}, denote by
$\loc(X)$ the subspace of $\C(X)$ spanned by fields of the form $u \sqcup_{\pa
  D}\! r$, where $D \subset X$ is a blob, $r \in \C(X \setminus \Int{D}; c)$ a
field and $u \in \loc(D;c)$ a local relation. A TQFT invariant of $X$ called the 
{\em skein module\/} associated to a system of fields~$\C$ and local
relations $\loc$ is the quotient
\[
A(X) : = \C(X)/\loc(X).
\]
If $X$ has a non-empty boundary $\pa X$ and $b \in \C(\pa X)$,
one has the obvious restricted version
\[
A(X;b) : = \C(X;b)/\loc(X;b).
\]
\end{definition}

\section{Blobs via unary operadic categories}
\label{Boli mne prava noha.}

We are going to introduce various operadic categories and operadic
modules together 
with the related operads and their modules, arising from the blobs and fields in
Section~\ref{Utopil jsem logger in telefon, ale vzpamatovali se.}. Our
aim is to describe  the associated  
bar resolutions, cf.~the second half of
Section~\ref{Snad si jeste jednou vytahnu Tereje.}.
In Section~\ref{Jarka mi koupila cokoladu.} we show that they are
quasi-isomorphic to the original blob complex in~\cite{Walker}. The
notation is summarized at the end of this section.

Let us fix a connected $(d\+1)$-dimensional, not necessarily closed, non-empty 
manifold $\fM$. If its boundary $\pa \fM$ is non-empty, some
constructions below will depend on a fixed field $b$ on $\pa \fM$. We
will however often omit such a boundary condition from the notation.   

We  denote by $\blob$ the
category opposite to the category of 
configurations of blobs in~$\fM$ and their
`well-behaved' inclusions. More precisely, objects of $\blob$ are
configurations $\fD$  of blobs in $\fM$ as in Definition~\ref{Te lucis
  ante terminum}, 
and a unique map
$\fD' \to \fD''$ exists if and only if $\fD''$ is a blob configuration
in the union of blobs in $\fD'$, 
cf.~Definition~\ref{Te lucis
  ante terminum} again. In what follows, by an inclusion of blob
configurations we will {\em always mean\/} a well-behaved inclusion in this sense.

Let us turn our attention to the operadic 
category  $\sfD(\blob)$ associated to the small category $\blob$ via the
recipe of~\eqref{Jesu salvator saeculi}.
Its objects are inclusions $i' :\fD' \hookleftarrow \fD$ of blob
configurations in $\fM$.
Notice that there is a one-to-one
correspondence between these inclusions, i.e.~objects of
$\sfD(\blob)$, and {\em blob complements\/}, which we define as closed
submanifolds of~$\fM$ of the form $\bigcup \fD' \setminus \bigcup\Int{
  \fD} $ with  $\fD$ a blob configuration in $\fD'$. 
Morphisms   $i' \to i''$ of $\sfD(\blob)$ are~diagrams
\begin{equation}
\label{Prohlidka se blizi.}
\xymatrix@C=1em{\fD' && \ar@{_{(}->}[ll] \fD''
\\
& \ar@{^{(}->}[ru]^{i''}  \ar@{_{(}->}[lu]_{i'}  \fD& 
}
\end{equation}
of inclusions of blob configurations. 
The fiber of the above morphism  is
the inclusion $\fD' \hookleftarrow \fD''$ interpreted as an object of
$\blob/\fD'' \subset \sfD(\blob)$. 

Let $\ublob$ be the category $\blob$  with a 
terminal object $\varnothing$ formally added. 
In this particular case, $\varnothing$ can be
viewed as the empty configuration of blobs, whence the notation.
Inclusion~(\ref{Nic nenalezeno}) describes  the
tautological operadic category $\Blob : = \Tau(\blob)$ as the
subcategory of $\uB:=\sfD(\ublob)$
whose objects
are inclusions $i' :\fD' \hookleftarrow \fD$, where~$\fD$~is allowed to
be {\em empty\/}. If this is so, we identify $i'$ with $\fD'
\in \blob$. Morphisms of $\Blob$ then arise as diagrams
in~\eqref{Prohlidka se blizi.} with possibly empty~$\fD$.

Every system of fields $\C$ in Definition~\ref{Clarifica me pater}
leads to the decorated version $\dblob$ of the operadic category
$\blob$. Its objects are pairs $(\fD;c)$ consisting of blob
configurations $\fD$ in $\fM$ and of a field $c \in \C(\pa
\fD)$. Morphisms $(\fD';c') \to (\fD'';c'')$ are inclusions \hbox{$\fD'
\hookleftarrow \fD''$} of blob configurations subject to the
condition:
\begin{center}
if the blob configurations $\fD'$ and $\fD''$ share a common blob $D$, 
then $c'|_{\pa  D} = c''|_{\pa D}$. 
\end{center}

We will tacitly assume that all inclusions of decorated blobs
satisfy the above condition. Denoting by $\udblob$ the category $\dblob$ extended  by the empty blob,
the tautological operadic category 
$\dBlob : = \Tau(\dblob)$ becomes the full subcategory of
$\uB(\C) :=\sfD(\udblob)$ whose objects are
`extended' morphisms $i':  (\fD';c') \hookleftarrow (\fD;c)$ in  $\dblob$ with
$\fD$ allowed to be empty, and morphisms the diagrams
\begin{equation}
\label{Mam to tomu doktorovi rict?}
\xymatrix@C=.1em{(\fD';c') && \ar@{_{(}->}[ll] (\fD'';c'')
\\
& \ar@{^{(}->}[ru]^{i''}  \ar@{_{(}->}[lu]_{i'}  (\fD;c)& 
}
\end{equation}
with $\fD$ allowed to be empty. 

It turns out that $\dBlob$  is the partial operadic
Grothendieck construction, in the sense of
Section~\ref{Podari se mi premluvit Jarku abych mohl jet do Prahy uz
  dnes?}, over its un-decorated version. Explicitly
\begin{equation}
\label{Divna doba.}
\dBlob \cong \int_{\ \Blob } \oS,
\end{equation}
where $\oS$ is the following partial pseudo-unital $\Blob$-operad in
$\Set$.
The component of $\oS$ corresponding to $\fD' \hookleftarrow \fD \in \Blob$ is
the set $\C(\pa \fD' \cup \pa \fD)$. We denote this component by~$\colorop{\oS}(\fD;\fD'\,)$.
The partial composition
\begin{equation}
\label{Bojim se toho.}
\gamma:
\colorop{\oS}(\fD'';\fD'\,) \times 
\colorop{\oS}(\fD;\fD''\,)
\longrightarrow \colorop{\oS}(\fD;\fD'\,)
\end{equation}
associated to the morphism in~(\ref{Mam to tomu doktorovi rict?}) is
defined for the pairs $(x,y)$ of fields 
\[
x \in \C(\pa\fD'
\cup \pa\fD'') = 
\colorop{\oS}(\fD'';\fD'\,)
\ \hbox { and } \
y \in \C(\pa\fD''
\cup \pa\fD) = 
\colorop{\oS}(\fD;\fD''\,)
\]
such that 
\begin{equation}
\label{Zitra to na Tereje jeste nebude.}
x|_{\pa \fD''} = y|_{\pa \fD''}
\end{equation} 
in which case 
\[
\gamma(x,y) := x|_{\pa \fD'} \cup y|_{\pa \fD} \in \C(\pa \fD'\cup \pa\fD) =
\colorop{\oS}(\fD;\fD'\,).
\] 

Let $T \in \Blob$ be the
inclusion  $\fD' \hookleftarrow \fD$. The fiber $U_T$ of the identity
$T \to T$ is the inclusion $\fD' \hookleftarrow \fD'$. By definition,
\[
\oS(T)= \C(\pa \fD'
\cup \pa \fD)\ \hbox { and }\ \oS(U_T) = \C(\pa \fD').
\]
The pseudo-unit  $e_t$ 
in~\eqref{Dnes jsem posledni den v Mexiku - uz v Praze.}  
associated to a field $t \in \oS(T)$ is
the restriction  $e_t: = t|_{\pa \fD'}\in \oS(U_T)$. 

\begin{proposition}
\label{Predevcirem jsem se nechal ockovat proti chripce.}
The isomorphism~(\ref{Divna doba.}) holds for the partial pseudo-unital operad \/  $\oS$ defined above. The natural
projection $\dBlob \to \Blob$ that forgets the decorating fields 
is thus a partial discrete operadic Grothendieck fibration.
\end{proposition}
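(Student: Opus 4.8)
The plan is to reduce the statement to Proposition~\ref{Mam v pokoji 18 stupnu a je pulka listopadu.}. Indeed, once we know that $\oS$, equipped with the elements $e_t := t|_{\pa\fD'}$, is a genuine partial pseudo-unital $\Blob$-operad in $\Set$ in the sense of Definitions~\ref{Uz se tech prohlidek bojim-partial.} and~\ref{Uz se tech prohlidek bojim - bliska se mi.}, the partial Grothendieck construction produces a partial discrete operadic fibration $\int_\Blob\oS \to \Blob$, so there are only two things to do: (a) check that $\oS$ is a partial pseudo-unital operad, and (b) exhibit an isomorphism of operadic categories $\dBlob \cong \int_\Blob\oS$ over $\Blob$, i.e.\ one commuting with the field-forgetting projection $\dBlob\to\Blob$ and with the tautological projection of the Grothendieck construction. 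Both are unravellings of the definitions; the only genuine input is the package of restriction/glueing axioms of Definition~\ref{Clarifica me pater}.

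\textbf{Step 1: $\oS$ is a partial pseudo-unital operad.} The maps~\eqref{Bojim se toho.} are assembled from the restriction maps $c\mapsto c|_Z$ of Definition~\ref{Clarifica me pater}(i) and the glueing of Definition~\ref{Clarifica me pater}(ii), and their domains are exactly the subsets cut out by the matching condition~\eqref{Zitra to na Tereje jeste nebude.}. Using that, by the glueing axiom, a field on a union $\pa\fD'\cup\pa\fD$ is the same as a pair of fields on $\pa\fD'$ and on $\pa\fD$ agreeing on the overlap (a union of boundaries of shared blobs), the coherence condition~\eqref{Poletam v patek?} for a chain of nested configurations $\fD\subseteq\fD''\subseteq\fD'$ reduces, after decomposing each field into its restrictions to the separate boundary pieces $\pa\fD'$, $\pa\fD''$, $\pa\fD$, to the functoriality of restriction and the associativity of glueing: both composites are defined exactly when the relevant restrictions match and then both equal $x|_{\pa\fD'}\cup y|_{\pa\fD''}\cup z|_{\pa\fD}$. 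The pseudo-unit axioms are equally direct: with $e_t = t|_{\pa\fD'}$ one has $e_t|_{\pa\fD'}=t|_{\pa\fD'}$, so $\gamma_{\id}(e_t,t)$ is defined and equals $t|_{\pa\fD'}\cup t|_{\pa\fD}=t$, and the remaining left identity of Definition~\ref{Uz se tech prohlidek bojim - bliska se mi.} as well as the right identity are verified the same way from functoriality of restriction.

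\textbf{Step 2: identification with $\dBlob$.} An object of $\int_\Blob\oS$ is a pair $(T,t)$ with $T = (\fD'\hookleftarrow\fD)$ an object of $\Blob$ and $t\in\oS(T)=\C(\pa\fD'\cup\pa\fD)$; sending it to the extended inclusion $(\fD';t|_{\pa\fD'})\hookleftarrow(\fD;t|_{\pa\fD})$ is a bijection onto the objects of $\dBlob$, the inverse glueing the decorations of a morphism of $\dblob$ — the $\dblob$-compatibility on shared blobs is precisely what makes this glueing possible, and it is automatic in the forward direction since $t|_{\pa D}$ is well defined. A morphism $(\epsilon,f)\colon s\to t$ of $\int_\Blob\oS$ consists of a morphism $f$ of $\Blob$ presented as in~\eqref{Prohlidka se blizi.} by nested configurations $\fD\subseteq\fD''\subseteq\fD'$ together with $\epsilon\in\oS(\text{fiber of }f)=\C(\pa\fD'\cup\pa\fD'')$ with $\gamma_f(\epsilon,t)=s$; decomposing $s$, $t$, $\epsilon$ into boundary restrictions turns this into exactly the data of a diagram~\eqref{Mam to tomu doktorovi rict?}. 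One then checks that the Grothendieck composition — which involves the induced map $\gamma_{f_C}$ — corresponds to the composition of nested decorated inclusions in $\dBlob$, and that the unit $(e_t,\id_T)$ goes to the identity of the corresponding decorated inclusion. Hence the assignment is an isomorphism of categories; it is operadic and commutes with the two projections to $\Blob$ because on fibers it again merely reshuffles boundary restrictions. The last sentence of the proposition is then immediate: $\int_\Blob\oS\to\Blob$ is a partial discrete operadic fibration by the very construction recalled before Proposition~\ref{Mam v pokoji 18 stupnu a je pulka listopadu.}, and this property transports along the isomorphism to the field-forgetting projection $\dBlob\to\Blob$.

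\textbf{Where the work is.} Essentially everything is bookkeeping with restrictions and glueings; the one place that demands care is the coherence~\eqref{Poletam v patek?} in Step~1, namely verifying that the two bracketings of a triple field-composition along a chain of nested configurations have the same domain and agree — which in turn forces one to keep precise track of which blobs are shared between which of the three configurations so that all the glueings are legitimate.
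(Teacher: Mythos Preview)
Your proposal is correct and follows the only natural route: verify that $\oS$ is a partial pseudo-unital $\Blob$-operad and then match the partial Grothendieck construction with $\dBlob$ over $\Blob$. The paper itself does not give a proof beyond the sentence ``The proposition is easy to check'' together with the explicit formula for $\LL(f)$, so your write-up is considerably more detailed than what the authors provide while being entirely in the spirit of what they intend.
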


The proposition is easy to check. 
The subspace $\LL(f)$ in~(\ref{Budu asi az do patku.})
associated to the partial  discrete operadic Grothendieck fibration $\dBlob \to \Blob$ equals
\[
\LL(f) = \big\{ (\varepsilon, s) \in \C(\pa \fD' \cup \pa \fD'')
\times \C(\pa \fD \cup \pa \fD'') \ \big | \ \varepsilon|_{\pa \fD''} = 
s|_{\pa \fD''} \big\} 
\]
when $f$ is the morphism~(\ref{Mam to tomu doktorovi rict?}).

We will also need modules arising from blobs and fields. 
Let us denote by
$\ttm$ the left $\blob$-module with one object $\fM$ and the unique
arrow  $\fM \to \fD$  for each configuration $\fD$ of blobs in~$\fM$. 
In the terminology of Example~\ref{Bojim se.}, $\ttm$ is the
chaotic module $\Cha\big(\{\fM\}, \blob\big)$.
Likewise, let $\uttm := \Cha\big(\{\fM\}, \ublob\big)$ be the 
left $\ublob$-module with one object $\fM$ and one
arrow  $\fM \to \fD$  for each configuration $\fD$ of blobs,
plus one  arrow $\fM \to \varnothing$. 

Referring to Example~\ref{Za chvili mam sraz s tim postdokem.}, we
introduce the tautological operadic $\Blob$-module
$\bM:=\Tau_{\blob}(\ttm)$. It is, by the obvious analog
of the inclusion~(\ref{Nic nenalezeno}), 
the operadic submodule of the $\uB$-module  
$\Mbar :=\sfD_{\ublob}(\uttm)$.
Objects of $\bM$ appear in $\Mbar$ as inclusions $\fM \hookleftarrow \fD$ of
blob configurations, where~$\fD$ might be empty, and the diagram
\[
\xymatrix@C=1em{\fM && \ar@{_{(}->}[ll] \fD'
\\
& \ar@{^{(}->}[ru]^{i'}  \ar@{_{(}->}[lu]_{i}  \fD& 
}
\]
in $\Mbar$ represents an arrow from  $\fM \hookleftarrow \fD \in \bM$ to  $\fD'
\hookleftarrow \fD\in \Blob$ with fiber  $\fM \hookleftarrow \fD' \in \bM$.
In the above diagram, $\fD$ is again allowed to be empty.

Every system of fields in Definition~\ref{Clarifica me pater}
gives rise to the decorated versions of the above modules. 
Namely, we have the left $\blob(\C)$-module $\ttm(\C)$ with single 
object the pair $(\fM;b)$, where
\hbox{$b \in \C(\pa \fM)$} is the fixed boundary condition. 
By definition, $\ttm(\C)$ has one arrow $(\fM;b) \to (\fD;c)$
for each configuration $(\fD;c)$ of decorated blobs such that $\fD \subset
\Int{\fM}$. If $\fD$ consists of a single blob $D$ and $\fM = D$, then
the arrow $(\fM;b) \to (\fD;c)$ exists only if and only if 
$b= c$. Thus $\ttm(\C)$ is a chaotic module unless 
$\fM$ is a ball. 

Let $\uttm(\C)$ be the left $\ublob(\C)$-module
obtained from $\ttm(\C)$ by adding one arrow $(\ttM;b) \to
\varnothing$ for each  $(\ttM;b) \in \ttm(\C)$.
The $\Blob(\C)$-module $\ttM(\C) : = \Tau_{\Blob(\C)}(\ttm(\C))$ is
then a natural submodule of $\Mbar(\C) :=\sfD_{\ublob(\C)}(\uttm(\C))$
whose objects are inclusions $(\fM;b) \hookleftarrow 
(\fD;c)$ where $\fD$ is allowed
to be~empty. 

\begin{lemma}
\label{Mirna nadeje?}
All objects of \/ $\Mbar(\C)$  that have the form 
$(\fM;b) \to  \varnothing$   are rigid in the sense
of Definition~\ref{Opet budu nytovat podvozek.} and satisfy (P1)--(P2) on page~\pageref{Dovoli mi to pocasi
  v nedeli?}. On the contrary, none of the objects of \/ $\ttM(\C)$ is rigid and none of them satisfies  (P1)--(P2).
\end{lemma}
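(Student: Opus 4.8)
The plan is to analyze separately the two kinds of objects, using the definitions of $\Mbar(\C) = \sfD_{\ublob(\C)}(\uttm(\C))$ and $\ttM(\C) = \Tau_{\ttm}(\ttm(\C))$ given via Example~\ref{Za chvili mam sraz s tim postdokem.} and the inclusion~\eqref{Nic nenalezeno}. First I would unwind what an object of the form $(\fM;b) \to \varnothing$ means: since $\varnothing$ is the formally added terminal object of $\ublob(\C)$, this object lives in the summand $(\fM;b)/\ublob(\C)$ of $\sfD_{\ublob(\C)}(\uttm(\C))$ sitting over $\varnothing$, and it is the unique arrow from the single object $(\fM;b)$ to $\varnothing$. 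To check rigidity (Definition~\ref{Opet budu nytovat podvozek.}), I must exhibit exactly one object $\odot$ of $\uB(\C)$ and exactly one arrow from $(\fM;b) \to \varnothing$ to $\odot$ with fiber $(\fM;b) \to \varnothing$ itself. The natural candidate for $\odot$ is the object $\varnothing \hookleftarrow \varnothing$ of $\uB(\C) = \sfD(\ublob(\C))$, and the unique arrow is the one given by the diagram with $(\fM;b)$ over $\varnothing$ on both sides; its fiber, computed by the recipe following Definition~\ref{Dnes prselo a bylo tesne nad nulou.}, is again $(\fM;b) \to \varnothing$. Uniqueness follows because $\varnothing$ is terminal, so there is at most one arrow out of any object into anything mapping into $\varnothing$, and the fiber condition pins down the target.

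For properties (P1)--(P2) on page~\pageref{Dovoli mi to pocasi v nedeli?}, applied to $\Upsilon = \big((\fM;b) \to \varnothing\big) \in \Mbar(\C)$, I would check that the category $\Upsilon/\uB(\C)$ has a global terminal object. Every arrow out of $\Upsilon$ in $\Mbar(\C)$ lands in some object of $\Blob(\C) \subset \uB(\C)$, and since all these factor through $\varnothing$, the arrow $\Upsilon \to \odot$ with $\odot = (\varnothing \hookleftarrow \varnothing)$ is terminal in $\Upsilon/\uB(\C)$: any arrow $\Upsilon \to (\fD'; c') \hookleftarrow (\fD;c)$ is uniquely left-divisible through $\odot$ because of the terminality of $\varnothing$. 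This gives (P1). For (P2), the fiber of $\Upsilon \to \odot$ is $\Upsilon$ as computed above, and the fiber functor $\Fib : \uB(\C)/\odot \to \uB(\C)$ restricted over $\odot = (\varnothing \to \varnothing)$ is the domain functor — this is the analog of Exercise~\ref{Pres vikend nebude pocasi nic moc.} for $\sfD(\ttA)$ with $\ttA = \ublob(\C)$ and $c = \varnothing$, and indeed $\sfD(\ttA)$ is right unital at its chosen local terminal objects by that exercise. So (P1)--(P2) hold.

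For the negative half, consider an object $M$ of $\ttM(\C) = \Tau_{\ttm}(\ttm(\C))$; by the inclusion analogous to~\eqref{Nic nenalezeno} it has the form $(\fM;b) \hookleftarrow (\fD;c)$ with $\fD$ a possibly-empty blob configuration in $\fM$. To see $M$ is not rigid, I would produce either zero or at least two objects $\odot$ of $\ttM(\C)$ with an arrow $M \to \odot$ whose fiber is $M$ — in fact the cleanest route is to observe that $\Tau(\ttA)$ is never right unital (Exercise~\ref{Pres vikend nebude pocasi nic moc.}), so the fiber of the identity $M \to M$ is not $M$ itself but rather $M$ reinterpreted in the slice; hence $M \to M$ is \emph{not} a witness to rigidity, and any other arrow $M \to \odot$ in $\ttM(\C)$ has a strictly smaller (proper-subconfiguration) target blob-complement as fiber, never equal to $M$. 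Thus there is \emph{no} arrow exhibiting $M$ as its own fiber, so rigidity fails. Similarly (P1)--(P2) fail: the category $M/\Blob(\C)$ either has no terminal object — since $\Blob(\C)$ has no terminal object, only the various $\fD'$, and one can always enlarge a blob configuration — or, even if one were terminal, its fiber would not be $M$ and the relevant fiber functor would not be the domain functor, exactly by the non-right-unitality of $\Tau$. I expect the main obstacle to be bookkeeping the difference between "the object $M$" and "its image in a slice category" carefully enough to conclude cleanly that no self-fibering arrow exists; the underlying geometric fact is simply that $\ttm(\C)$ has arrows into every blob configuration, so one can never isolate a terminal object or a self-fiber within $\Blob(\C)$ or $\ttM(\C)$, whereas the single formally-adjoined object $\varnothing$ does the job in the barred versions.
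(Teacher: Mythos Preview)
Your argument for the positive half is correct and is exactly the paper's (very terse) proof: take $\odot=(\varnothing\to\varnothing)\in\uB(\C)$ and use the terminality of $\varnothing$ in $\ublob(\C)$ together with the right-unitality of $\sfD(\ttA)$ at its identity objects.

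Your negative half, however, contains two genuine confusions. First, $M$ is an object of the \emph{module} $\ttM(\C)$, so there is no ``identity $M\to M$'': arrows in a left $\ttO$-module go from module objects to category objects, not to module objects. Second, you write ``at least two objects $\odot$ of $\ttM(\C)$'', but rigidity requires $\odot\in\Blob(\C)$, not $\odot\in\ttM(\C)$. Once these are straightened out, your intended argument works only for the object $(\fM;b)\in\ttm(\C)\subset\ttM(\C)$: every arrow from $(\fM;b)$ lands in $\blob(\C)\subset\Blob(\C)$ and has fiber lying in the other summand $\sfD_{\blob(\C)}(\ttm(\C))$, hence never equal to $(\fM;b)$; and $(\fM;b)/\Blob(\C)$ has no terminal object because no nonempty blob configuration is contained in every other one.

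For the remaining objects $M=\big((\fM;b)\hookleftarrow(\fD;c)\big)$ with $\fD\neq\emptyset$, your argument actually fails, and the literal statement appears too strong: the arrow $M\to\big((\fD;c)\hookleftarrow(\fD;c)\big)$ has fiber $M$, it is the unique such arrow, and the target is the local terminal of $\sfD(\blob(\C))$, so rigidity and (P1)--(P2) \emph{are} satisfied. The paper's proof simply says ``easy to check'' and the remark following the lemma only discusses $(\fM;b)$; since $(\fM;b)$ is the sole object at which the bar resolutions in Theorem~B are evaluated, the negative claim is correct where it matters. You should restrict your negative argument to that object.
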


Notice that $(\fM;b) \to  \varnothing$  is the image of $(\fM;b)$ under
the natural inclusion $\ttM(\C) \hookrightarrow \Mbar(\C)$. Thus
$(\fM;b)$ becomes rigid and satisfying (P1)--(P2) when considered as
an object of  $\Mbar(\C)$. Therefore $\Mbar(\C)$ is a kind of
completion of \/ $\ttM(\C)$, whence the notation.

\begin{proof}[Proof of Lemma~\ref{Mirna nadeje?}]
The object $(\fM;b) \to  \varnothing$ of \/  $\Mbar(\C)$ has the
desired properties for $\odot:= \varnothing \to \varnothing$. The
second part of the lemma is easy to check.
\end{proof}

Assume, as in the previous section, that the fields on
codimension-zero submanifolds of $\fM$ are linearly enriched.
We are going to define a $\uB(\C)$-operad  $\fopn$
with values in $\Vect$ as follows. If $\fD$ and $\fD'$ are nonempty
blob configurations, we~put
\[
\Krtek {(\fD';c')}{(\fD;c)}
:= \big\{f \in \C(\fD' \setminus
\Int{\fD}) \  \big| \ 
f|_{\pa \fD} = c,\ f|_{\pa \fD'} = c' \big\}.
\]
The definition is competed by setting
\[
\Krtek {(\fD;c)}\emptyset
:=\loc(\fD;c),
\]
where $\loc(\fD;c) \subset \C(\fD;c)$ is the subspace of local
relations, cf.~Definition~\ref{zacina i leve},
that restrict to $c$ at~$\pa \fD$.
Finally, 
\[
\Krtek\emptyset\emptyset:=  R,\
\hbox { the ground ring.}
\]
Thus $\fopn$ is composed of fields that extend the given ones on the boundary.
The structure operation
\[
\Krtek{(\fD';c')}{(\fD'';c'')}
\otimes 
\Krtek{(\fD'';c'')}{(\fD;c)}
\longrightarrow \Krtek{(\fD';c')}{(\fD;c)}
\]
associated to the morphism in~\eqref{Mam to tomu doktorovi rict?}
is given by the glueing of fields along $\pa \fD''$. The operad $\fopn$
is unital, with the units
\[
c \in \Krtek{(\fD;c)}{(\fD;c)}
\ \hbox { if $\fD \not= \emptyset$, and }
1 \in \Krtek\emptyset\emptyset.
\]
We will also use the  $\Blob(\C)$-operad  $\fop$ defined as 
the restriction of the $\uB(\C)$-operad $\fopn$ along the inclusion 
$\Blob(\C) \hookrightarrow \uB(\C)$. 
We define an $\fopn$-module  $\oMn$ by
\[
\Krtekmodularni{(\fM;b)}{(\fD;c)}
:= \big\{f \in \C(\fM \setminus
\Int{\fD}) \  \big| \ 
f|_{\pa \fD} = c,\ f|_{\pa \fM} = b \big\}
\]
if $\fD \not= \emptyset$, and
\[
\Krtekmodularni{(\fM;b)}{\emptyset}
:= \C(\fM;b).
\]
Denote finally by $\oM$ the 
$\fop$-module which is the restriction, cf.~Definition~\ref{Je
  to tak na 50 procent.},  of the 
$\fopn$-module  $\oMn$ along the pair $(\iota,\jmath) :
(\Blob(\C),\ttM(\C)) \to (\uB(\C),\Mbar(\C))$ of the
natural inclusions.

Referring to Definition~\ref{Prestane mi to paleni?}, we will consider
for a fixed field $b$ on the boundary of $\fM$
two augmented complexes, namely
\[
\beta_*(\fopn,\oMn)\big((\fM;b) \to \varnothing\big) \longrightarrow \C(\fM;b)\
 \hbox { and } \
\beta_*(\fop,\oM)(\fM;b) \longrightarrow \C(\fM;b).
\]
By the functoriality of Proposition~\ref{Jak to vsechno dopadne?}, the
pair $(\iota,\jmath)$ induces the commutative diagram
\[
\xymatrix@C=4em{
\beta_*(\fop,\oM)(\fM;b) \ar[r]  \ar@{^{(}->}[d]& \C(\fM;b) \ar@{=}[d]
\\
\beta_*(\fopn,\oMn)\big((\fM;b) \to \varnothing\big)
  \ar[r] &
  \C(\fM;b)
}
\]
of augmented complexes. The next theorem follows from Theorem~A, 
Lemma~\ref{Mirna nadeje?}
and an easy computation.

\begin{theoremB}\label{Veta B}
The augmented complex $\beta_*(\fopn,\oMn)\big((\fM;b) \to \varnothing\big) 
\to \C(\fM;b)$ is a component of an acyclic resolution of \/ $\oMn$ via
unital free \/ $\fopn$-modules. In particular
\[
H_0\big(\beta_*(\fopn,\oMn)\big((\fM;b) \to \varnothing)\big) \cong  \C(\fM;b).
\]
The complex $\beta_*(\fop,\oM)(\fM;b) \to \C(\fM;b)$ resolves
the skein module in Definition~\ref{Vydrzi to jeste 14 dni?}, namely
\[
H_{-1}\big(\beta_*(\fop,\oM)(\fM;b) \to \C(\fM;b)\big) \cong  A(\fM;b).
\]
An obvious similar statement holds also for 
the normalized bar construction.
\end{theoremB}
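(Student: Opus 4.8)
The plan is to read the two assertions about $\beta_*(\fopn,\oMn)\big((\fM;b)\to\varnothing\big)$ straight off Theorem~A, and to obtain the assertion about $\beta_*(\fop,\oM)(\fM;b)$ by computing the degree-zero part of that complex.

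First I would apply Theorem~A to the operadic category $\uB(\C)$, its operadic module $\Mbar(\C)$, the unital operad $\fopn$, the unital $\fopn$-module $\oMn$, and the object $\Upsilon:=\big((\fM;b)\to\varnothing\big)$ of $\Mbar(\C)$. By Lemma~\ref{Mirna nadeje?} this $\Upsilon$ is rigid and satisfies (P1)--(P2) on page~\pageref{Dovoli mi to pocasi v nedeli?}, so Theorem~A yields that $\beta_*(\fopn,\oMn)(\Upsilon)\to\oMn(\Upsilon)$ is acyclic and that each $\beta_n(\fopn,\oMn)(\Upsilon)$ is a component of a free unital $\fopn$-module. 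Since $\oMn(\Upsilon)=\C(\fM;b)$ by the definition of $\oMn$ at the object with empty inner configuration, exactness of the augmented complex gives $H_0\big(\beta_*(\fopn,\oMn)(\Upsilon)\big)\cong\C(\fM;b)$, which is the first half of the statement; the normalized version then follows from the corresponding clause of Theorem~A, since normalization is a chain homotopy equivalence.

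For the skein-module part I would first determine which towers contribute to $\beta_*(\fop,\oM)(\fM;b)$. Because $\Blob(\C)$ and $\ttM(\C)$ are tautological, every arrow out of $(\fM;b)$ lands in the copy of $\dblob$, so a contributing tower is a chain of nested decorated blob configurations in $\fM$, ending with the inclusion $(\fM;b)\hookleftarrow(\fD_n;c_n)$. Tracing the fiber structure of these tautological objects through the definitions of $\fop$ and $\oM$, the leftmost object $T_0=(\fD_0;c_0)$ of such a tower forces $\fop(T_0)$ to be precisely $\loc(\fD_0;c_0)$, the space of local relations on $\fD_0$ with boundary value $c_0$; the intermediate tensor factors are spaces of fields on the blob complements determined by two consecutive terms of the chain, and the module factor $\oM(N)$ is the space of fields on $\fM$ with the interiors of the blobs of $\fD_n$ removed that restrict to $c_n$ on $\pa\fD_n$ and to $b$ on $\pa\fM$; the operations $\gamma$ and $\nu$ are the field glueings of Definition~\ref{Clarifica me pater}(ii). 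In particular the augmentation $\epsilon:\beta_0(\fop,\oM)(\fM;b)\to\C(\fM;b)$ sends the summand indexed by a one-step tower $(\fM;b)\hookleftarrow(\fD_0;c_0)$ to the glued fields $u\sqcup_{\pa\fD_0}\! r$ with $u\in\loc(\fD_0;c_0)$ a local relation and $r$ a field on $\fM$ restricting to $c_0$ on $\pa\fD_0$ and to $b$ on $\pa\fM$; by Definition~\ref{Vydrzi to jeste 14 dni?} all such elements lie in $\loc(\fM;b)$, and taking $\fD_0$ to be a single blob one gets every generator of $\loc(\fM;b)$. Hence the image of $\epsilon$ is exactly $\loc(\fM;b)$ and
\[
H_{-1}\big(\beta_*(\fop,\oM)(\fM;b)\to\C(\fM;b)\big)=\operatorname{coker}\epsilon\cong\C(\fM;b)/\loc(\fM;b)=A(\fM;b).
\]
Compatibility of these identifications with the subcomplex inclusion into the absolute complex is the commuting square supplied by Proposition~\ref{Jak to vsechno dopadne?}, and the normalized statement is identical since normalization does not alter the cokernel of the augmentation.

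The step I expect to be the main obstacle is the explicit computation of $\operatorname{im}\epsilon$: one has to handle the degenerate towers in which a blob of $\fD_0$ is a whole connected component of $\fM$, so that the glueing takes place along an embedded $d$-sphere rather than across a codimension-one wall, and one has to check that a local relation supported on a multi-blob configuration glues to a local relation of $\fM$ in the sense of Definition~\ref{Vydrzi to jeste 14 dni?}. With the relaxed conventions of Section~\ref{Utopil jsem logger in telefon, ale vzpamatovali se.} neither point causes real difficulty, so the whole argument really is, as claimed, an easy computation built on Theorem~A and Lemma~\ref{Mirna nadeje?}.
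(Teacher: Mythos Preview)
Your proposal is correct and follows essentially the same route as the paper: the paper's own proof is the single sentence ``follows from Theorem~A, Lemma~\ref{Mirna nadeje?} and an easy computation,'' and you have supplied exactly those three ingredients, including a careful unpacking of the ``easy computation'' identifying $\operatorname{im}\epsilon$ with $\loc(\fM;b)$ that the paper leaves implicit (and only elaborates afterwards in the explicit description~\eqref{Za 6 dni na ocni.}). Your final paragraph correctly flags the only subtleties in that computation and disposes of them appropriately.
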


The subscript $-1$ of $H$  refers to the homology at
$\C(\fM;b)$. In the next section we prove that the complex 
$\beta_*(\fop,\oM)(\fM;b) \to
\C(\fM;b)$ is quasi-isomorphic, but not isomorphic(!), to the blob
complex introduced in \cite{blob}.

Having in mind the comparison with other `blob complexes' in
the next section,  we describe the complex $\beta_*(\fop,\oM)(\fM;b)$
more explicitly. 
The corresponding towers  in~(\ref{Poradmnebolipravepredlokti}) are in this
particular situation the same as the towers of admissible inclusions
\begin{equation}
\label{Predverem jsem se znovu narodil.}
\Twr_\fM:\ 
\xymatrix@1@C=1.5em{
 (\fD^0;c^0)\ \ar@{^{(}->}[r]^(.52){\iota^0} \
&\ (\fD^1;c^1)\  \ar@{^{(}->}[r]^(.55){\iota^1} 
&  (\fD^2;c^2)\  \ar@{^{(}->}[r]^(.65){\iota^2} 
& \ \cdots \ {\ar@{^{(}->}[r]^(.33){\iota^{n-2}}}
& \ \ar@{^{(}->}[r]^(.62){\iota^{n-1}} (\fD^{n-1};c^{n-1}) 
\ 
&(\fD^n;c^n)\ \ar@{^{(}->}[r]^(.55){\iota}
& (\fM;b)
}
\end{equation}
of decorated blob configurations. 

For $0 \leq k \leq n\!-\!1$ and $D \in
\fD^{k+1}$ denote by $\fD^k_D$ the sub-configuration of
$\fD^k_D$  of blobs which are subsets of $D$, i.e.\ 
\[
\fD^k_D := \big \{
D' \in \fD^k \ | \ D' \subset D \big \}
\]
with the  order induced from  $\fD^k$. Denote also 
by $(\fD^k_D,c^k_D)$ the configuration $\fD^k_D$ with the
decoration on the boundary inherited from  $\fD^k$. The product in the right hand side 
of~(\ref{Jarusku boli  brisko.}) then~equals
\begin{equation}
\label{Za 6 dni na ocni.}
\bigotimes_{D \in \fD^0} \loc(D;c_D)
\ot 
\bigotimes_{k=0}^{n-1} \left\{ \rule{0em}{2em} \right.  \hskip -.5em
\bigotimes_\doubless{D \in \fD^{k+1}}{\fD^{k}_D \not= \emptyset} \hskip -.5em
\C\big(D \setminus \Int{\fD}^{k}_D; c_D \sqcup c^{k}_D\big) \ot  
\hskip -.5em
\bigotimes_\doubless{D \in \fD^{k+1}}{\fD^{k}_D = \emptyset}
 \hskip -.5em \loc( D; c_D)  \left. \rule{0em}{2em} \right\}
\ot
\C(\fM \setminus \Int{\fD}^n; b \sqcup c^n).
\end{equation}
Since all inclusions in~(\ref{Predverem jsem se znovu narodil.}) are
admissible, if $\fD^{k}_D = (D)$, then $c_D = c_D^{k}$ and thus in~(\ref{Za 6 dni na ocni.})
\[
\C\big(D \setminus \Int{\fD}^{k}_D; c_D \sqcup c^{k}_D\big) =
\Span({c}).
\]
An analogous formula for the piece $B_*(\fop,\oM)(\fM;b)$ of the normalized bar
construction in Definition~\ref{Za 70 minut.} 
can be obtained by restricting in~(\ref{Za 6 dni na ocni.}) the first
tensor product in the curly braces to $D \in \fD^{k+1}$ such that
$\fD^{k}_D \not= (D)$.

Tower~(\ref{Predverem jsem se znovu narodil.}) determines the
planar rooted 
tree  ${\rm T}_\fM$  with $n+2$ levels. Its root is
at level zero, level~$\ell$ has one vertex for each blob $D \in \fD^{n-\ell
  +1}$, $1 \leq \ell \leq n\!+\!1$. There is 
one oriented edge $D' \to D$ for each pair $(D',D)$ with $D \in
\fD^{k+1}$ and $D' \in \fD^{k}_D$. Since blob configurations are
linearly ordered sets by definition, the set of input edges of
each vertex is linearly ordered too, so  ${\rm T}_\fM$ is
planar, cf.~\cite[Example~4.10]{env}. We invite the reader to draw a
picture.

The product~(\ref{Za 6 dni na ocni.}) is thus the space
of all vertex-decorations of~${\rm T}_\fM$ such that the root is decorated by
the fields on $\fM$, the twigs (= the vertices with no input
edges) by the local relations, and the remaining vertices  
by the fields on the blob complements, all subject to the
matching of the fields on the boundaries. This description will play
an important r\^ole in the next section.

\begin{center}
{\bf Notation recall}
\end{center} \nopagebreak[4] 
\noindent\nopagebreak[4]%
Operadic categories:\hfill\break 
\noindent 
$\blob$ \leaderfill 
the category opposite to the category of blob 
configurations and their inclusions\break 
$\ublob$ \leaderfill  
the category $\blob$ extended by the empty configuration\break
$\Blob$ \leaderfill 
the tautological operadic category $\Tau(\blob)$
associated to $\blob$\break 
$\uB$ \leaderfill the operadic category $\sfD(\ublob)$\break
$\dblob$, $\dublob$, $\dBlob$, $\uB(\C)$ \leaderfill the $\C$-decorated
versions of the above categories\break
\noindent 
Operadic modules:\hfill\break 
$\ttm$ \leaderfill the chaotic left $\blob$-module  $\Cha(\{\fM\}, \blob)$ with one object $\fM$\break
$\uttm$ \leaderfill the chaotic left $\ublob$-module $\Cha(\{\fM\},
\ublob)$ with one object $\fM$\break
$\bM$  \leaderfill  the tautological $\Blob$-module
$\Tau_{\blob}(\ttm)$ associated to $\ttm$\break
$\Mbar$ \leaderfill the $\uB$-module $\sfD_{\ublob}(\uttm)$\break
$\ttm(\C)$, $\uttm(\C)$,  $\ttM(\C)$,   
$\Mbar(\C)$
\leaderfill 
 the $\C$-decorated
versions of the above modules\break
\noindent 
Operads and their modules:\hfill\break 
$\fopn$    \leaderfill  the  $\uB(\C)$-operad of fields\break
$\fop$ \leaderfill 
the restriction of $\fopn$ to 
$\Blob(\C)$\break
$\oMn$ \leaderfill the $\fopn$-module of fields\break
$\oM$ \leaderfill the $\fop$-module defined as the restriction of
$\oMn$ to $\ttM(\C)$\break

\section{Blobs via colored operads, and comparison theorems}
\label{Jarka mi koupila cokoladu.}

The assumptions about the base manifold, blobs, fields, local
relations \&c.,~are the same as in Section~\ref{Boli mne prava noha.}.  
We start by showing that these data
determine a traditional $R$-linear unital 
colored operad $\fopc$, cf.~\cite[Section 2]{haha} for the definition
of colored operads. Operad modules were introduced
in~\cite[Definition~1.3]{markl:zebrulka}, 
cf.~also more recent~\cite[Subsections~2.1.5--6]{Fresse}.

Colored operads require colors. In our case, colors will be 
pairs $(D,c)$ consisting of a blob~$D$ in $\fM$ with a field $c \in \C(\pa
D)$ on its boundary.
Suppose that  $\fD =
\{\Rada D1r\}$ is a configuration of blobs in $D$ as in
Definition~\ref{Te lucis ante terminum},  and $r \geq 2$. 
Then
\begin{equation}
\label{Lucis}
\colorop {\fopc}(\crada{(D_1,c_1)}{(D_r,c_r)} ;{(D,c)}) := \C\big(D
\setminus \bigcup_{i=1}^r \Int{D}_i;c \sqcup c_1 \sqcup
\cdots \sqcup c_r \big),
\end{equation} 
the span of  fields in $\C(D
\setminus \Int\fD)$ that restrict to the field $c \sqcup c_1 \sqcup
\cdots \sqcup c_r$ on $\pa D
\cup \pa \fD$, cf.~Figure~\ref{treti den valky}.
\begin{figure}
\[
\psscalebox{1.0 1.0} 
{
\begin{pspicture}(0,-2.2666667)(5.0,2.2666667)
\definecolor{colour0}{rgb}{0.92156863,0.9137255,0.9137255}
\definecolor{colour1}{rgb}{0.74509805,0.6431373,0.6431373}
\psframe[linecolor=black, linewidth=0.04, fillstyle=solid,fillcolor=colour0, dimen=outer](4.8,2.266667)(0.0,-2.2666664)
\pscircle[linecolor=black, linewidth=0.04, fillstyle=solid,fillcolor=colour1, dimen=outer](2.5333333,-0.13333318){1.7333333}
\pscircle[linecolor=black, linewidth=0.04, fillstyle=solid,fillcolor=white, dimen=outer](1.8666667,-0.6666665){0.53333336}
\pscircle[linecolor=black, linewidth=0.04, fillstyle=solid,fillcolor=white, dimen=outer](3.3333333,-0.2666665){0.6666667}
\pscircle[linecolor=black, linewidth=0.04, fillstyle=solid,fillcolor=white, dimen=outer](2.0,0.80000013){0.4}
\rput[bl](4.0,1.6000001){$\fM$}
\rput[bl](1.77,0.6266668){$D_1$}
\rput[bl](1.666667,-0.85){$D_2$}
\rput[bl](3.2,-0.46){$D_3$}
\rput[bl](2.8,0.8){$D$}
\rput[bl](3.4666667,1.35){$c$}
\rput[bl](2.1333334,1.15){$c_1$}
\rput[bl](1.6,-0.12333318){$c_2$}
\rput[bl](3.4666667,0.40000015){$c_3$}
\end{pspicture}
}
\]
\caption{A piece of the colored operad $\fopc$ -- a schematic
  picture of a `punctured blob.'\label{treti den valky}}  
\end{figure}
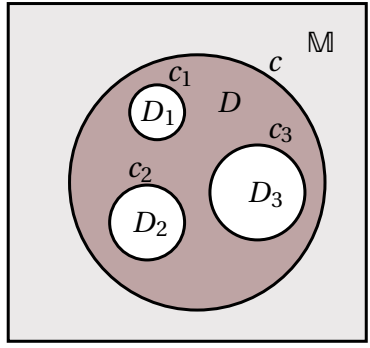
To define the component
\begin{equation}
\label{Ex more docti mystico}
\colorop {\fopc}((D',c');{(D,c)}),
\end{equation}
we distinguish two cases. If $D' \subset \Int D$, then~(\ref{Ex more
  docti mystico}) equals $\C( D \setminus \Int {D'}; c \sqcup
c')$ as expected. If $D = D'$, we moreover require that $c=c'$, and then 
\[
\colorop {\fopc}((D,c);{(D,c)}) := \Span(\{c\}),
\]
the $R$-linear span of the one-point set $\{c\}$. If $r=0$, we define
\[
\colorop {\fopc}(\emptyset;{(D,c)}) := \loc(D;c),
\]
the space of local relations. They  thus appear as operations with no input
and one output, that is, the `constants.'

\noindent 
{\bf Warning.} The symbol 
`$\emptyset$' in the above display is not
an input color, but  indicates that the set of
inputs is empty.

\begin{proposition}
\label{Prohlidky se blizi.}
The structure $\fopc$ defined above is a unital colored $R$-linear operad.
\end{proposition}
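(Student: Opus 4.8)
The plan is to verify directly that the collection of spaces $\fopc$ together with the glueing-of-fields composition and the units displayed before the proposition satisfies the axioms of a unital coloured $R$-linear operad in the sense of~\cite[Section~2]{haha}. Since everything is defined concretely in terms of restriction and glueing of fields, the proof is essentially a bookkeeping exercise; the only conceptual points are (a) that composition is everywhere defined and lands in the stated component, and (b) that the three degenerate cases in the definition (the span $\Span(\{c\})$ when $D=D'$, the local relations $\loc(D;c)$ when $r=0$, and the ground ring $R$) interact correctly with composition.

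First I would describe the structure operation. Given a blob $D$ with boundary field $c$, a configuration $\fD=\{\Rada D1r\}$ in $D$, and for some $D_j$ a further configuration $\fE_j=\{\Rada E1s\}$ in $D_j$ with boundary fields $\Rada e1s$, the $\circ_j$-composition sends
\[
f \in \colorop{\fopc}(\crada{(D_1,c_1)}{(D_r,c_r)};(D,c))
\ \hbox{and}\
g \in \colorop{\fopc}(\crada{(E_1,e_1)}{(E_s,e_s)};(D_j,c_j))
\]
to the glued field $f \sqcup_{\pa D_j} g \in \C\big(D\setminus(\textstyle\bigcup_{i\neq j}\Int D_i \cup \bigcup_k \Int E_k)\big)$, which is well defined by property~(ii) of Definition~\ref{Clarifica me pater} because $f|_{\pa D_j}=c_j=g|_{\pa D_j}$, and which restricts correctly on the remaining boundaries by property~(i); hence it lies in $\colorop{\fopc}(\ldots;(D,c))$ as required. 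Here I would treat the degenerate sub-cases: when $D_j=D_j'$ so the inner component is $\Span(\{c_j\})$, composing with the element $c_j$ just forgets nothing and returns $f$, so units of this form act as identities; when $s=0$ the inner operation is a local relation $u\in\loc(D_j;c_j)$ and $f\sqcup_{\pa D_j}u$ is a field on $D\setminus\bigcup_{i\neq j}\Int D_i$ — note this is why local relations were built to be an \emph{ideal} (Definition~\ref{zacina i leve}): the result still makes sense and lies in the appropriate $\C$-space; and the component $\colorop{\fopc}(\emptyset;\emptyset)=R$ with $1\in R$ acts by scalar. $R$-bilinearity of $\circ_j$ is immediate since glueing of fields is $R$-linear on the linearly enriched spaces of top-dimensional fields, by the standing assumption of Section~\ref{Boli mne prava noha.}.

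Next I would check the axioms in turn. \emph{Unitality}: for $D$ with $r\geq 1$ inputs the element $c\in\colorop{\fopc}((D,c);(D,c))=\Span(\{c\})$ composed on the output of $f$ returns $f$, because glueing along $\pa D$ with the `empty field on a collar' changes nothing; composing the input unit $c_i$ into the $i$-th slot of $f$ likewise returns $f$; the nullary case uses $1\in R$. \emph{Associativity and equivariance}: there are the two usual associativity patterns (vertically stacked compositions $\circ_i$ then $\circ_{ij}$, and horizontal compositions into distinct slots $\circ_i$, $\circ_j$). In each case both sides of the identity are, by unravelling the definition, the field obtained by glueing all the constituent fields along all the relevant blob boundaries $\pa D_i$, $\pa E_{k}$, \&c.; since the glueing operation of Definition~\ref{Clarifica me pater}(ii) is associative and commutative in the obvious sense (glueing along disjoint closed codimension-zero pieces of the boundary, which our blobs always are, is order-independent), the two sides coincide. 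Equivariance under permutations of the inputs is built in because $\C(D\setminus\bigcup\Int D_i;\,\cdots)$ does not depend on the ordering of the $D_i$'s — a configuration of blobs is an \emph{unordered} set in Definition~\ref{Te lucis ante terminum} — so the symmetric group acts by simply relabelling colours, compatibly with $\circ_j$.

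The step I expect to require the most care is the interaction of the nullary operations (local relations) with composition: one must make sure that glueing a field $f$ onto a local relation $u\in\loc(D_j;c_j)$ along $\pa D_j$ produces something that is still in the declared component — which is a plain space of fields $\C(D\setminus\cdots)$, \emph{not} a space of local relations — and that the associativity identities still hold when several of the factors are constants. This is exactly the content of the ideal axiom in Definition~\ref{zacina i leve}, invoked with $D\setminus\Int D_j$ (or a larger complement) in place of $D\setminus\Int{D'}$; once one observes that the `$r$' of Definition~\ref{zacina i leve} can be taken to be any admissible field on the blob complement, the verification goes through. With these points settled, all operad axioms reduce to the elementary functoriality and gluing properties of $\C$ recalled in Definition~\ref{Clarifica me pater}, and the proposition follows.
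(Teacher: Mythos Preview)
Your approach---direct verification that glueing of fields gives operadic composition with the boundary fields $c$ as units---is precisely what the paper does, and your proof is considerably more detailed than the paper's own short argument. One point, however, is stated backwards. You write that when plugging a local relation $u\in\loc(D_j;c_j)$ into a slot of $f$, the target component ``is a plain space of fields $\C(D\setminus\cdots)$, \emph{not} a space of local relations,'' and that the ideal axiom is what makes this work. In fact, when $r\geq 2$ the target is indeed a space of fields and no ideal property is needed: since $\loc(D_j;c_j)\subset\C(D_j;c_j)$, the glueing $f\sqcup_{\pa D_j}u$ is automatically a field on the punctured blob. The case that \emph{does} require the ideal axiom is exactly the opposite one: when $r=1$ (or more generally when \emph{all} inputs are filled by constants), the output must lie in $\colorop{\fopc}(\emptyset;(D,c))=\loc(D;c)$, so one needs to know that glueing a local relation with a field on the complement produces a local relation in the larger ball. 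That is the content of Definition~\ref{zacina i leve}, and this is precisely how the paper invokes it. Once this is corrected, your argument is complete.
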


\begin{proof}
Let $x \in \colorop{\fopc}(\crada{(D_1;c_1)}{(D_r;c_r)} ;{(D;c)})$ and $x_i\in
\colorop{\fopc}(\crada{(D^i_1;c^i_1)}{(D^i_{k_i};c^i_{k_i})};{(D_i;c_i)})$,
$1\leq i \leq r$. Then the operadic composite
\begin{equation}
\label{posledni sobota ve Stokholmu}
x(\Rada x1n) \in
\colorop{\fopc}(\crada{(D^1_1;c^1_1)}{(D^1_{k_1};c^1_{k_1})}\cdots
\crada{(D^r_1;c^r_1)}{(D^r_{k_r};c^r_{k_r})};{(D,c)})
\end{equation}
is the field obtained by glueing the fields $x,\Rada x1r$ along the
boundaries of the balls $\Rada D1r$. The color-matching guarantees that
this glueing is possible. The image of the glueing
\[
\colorop{\fopc}(\crada{(D_1;c_1)}{(D_r;c_r)} ;{(D;c)}) \ot
\colorop {\fopc}({\emptyset};{(D_1,c_1)}) \ot \cdots \ot \colorop
{\fopc}({\emptyset};{(D_r,c_r)}) 
 \longrightarrow \colorop {\fopc}({\emptyset};{(D,c)})
\]
consists of local relations as expected due to the ideal property 
of Definition~\ref{zacina i leve}.
The operad axioms are immediately clear,
including the
unit property of $c\in\colorop {\fopc}((D,c);{(D,c)})$. 
\end{proof}


Fields on the base manifold $\fM$ that restrict to a given $b
\in \C(\pa \fM)$ form a  {\em left
  $\fopc$-module \/} $\oMc$ with the components
\[
\oMc\big(\crada{(D_1,c_1)}{(D_r,c_r)}\big) := \C\big(\fM
\setminus \bigcup_{i=1}^r \Int{D}_i; c_1 \sqcup
\cdots \sqcup c_r  \sqcup b \big),
\]
with the left $\fopc$-action assigning to each $m \in
\oMc\big(\crada{(D_1,c_1)}{(D_r,c_r)}\big)$ and to $x_i$'s
as in the proof of Proposition~\ref{Prohlidky se blizi.} the element
\[
m(\Rada x1n) \in
\oMc\big(\crada{(D^1_1;c^1_1)}{(D^1_{k_1};c^1_{k_1})}\cdots
\crada{(D^r_1;c^r_1)}{(D^r_{k_r};c^r_{k_r})}\big),
\]
given by the glueing of fields as before.
In the rest of this section, by `colors' we mean the colors used in the definition
of the operad $\fopc$ and its module $\oMc$.

The operad $\fopc$ and its module $\oMc$ can be used to write
formula~\eqref{Za 6 dni na ocni.} for $\beta_*(\fop,\oM)(\fM;b)$
and for its  normalized modification $B_*(\fop,\oM)(\fM;b)$ in a
nice compact form. Given $D \in \fD^{k+1}$, denote 
\[
(\fD^k_D,c_D^k) =
\big((D_D^1,c^1_D),\ldots,(D_D^{i_D},c^{i_D}_D)\big)\ \hbox{ and }\ 
\fD^n =  \big((D_n^1,c^1_n),\ldots,(D_n^{i_n},c^{i_n}_n)\big).
\]
The right hand side of~\eqref{Za 6 dni na ocni.} then becomes
\begin{equation}
\label{Po navsteve ocniho.}
\bigotimes_{D \in \fD^0} 
\colorop{\fopc}(\emptyset;{(D;c_D)})
\ot 
\bigotimes_{k=0}^{n+1} \
\bigotimes_{D \in \fD^{k+1}} \hskip -.5em
\colorop{\fopc}({(D_D^1,c^1_D),\ldots,(D_D^{i_D},c^{i_D}_D)};{(D;c_D)})
\ot
\oMc\big((D_n^1,c^1_n),\ldots,(D_n^{i_n},c^{i_n}_n)\big).
\end{equation}
Notice that the two tensor products in the curly brackets
of~\eqref{Za 6 dni na ocni.} have been absorbed by one tensor product, 
thanks to the convenient
definition of the operad $\fopc$. The normalized variant is obtained
by assuming that in the  `big' tensor product either  $i_D \geq 2$,
or $i_D=1$ but $D_D^1 \not= D$.

As in the paragraph following formula~\eqref{Za 6 dni na ocni.}
we interpret the product~(\ref{Po navsteve ocniho.}) as the space
of all vertex-decorations of a planar rooted 
tree with $n+2$ levels and edges colored by fields, 
such that the root is decorated by $\oMc$ and the other vertices with
$\fopc$ in such a way that the output and the inputs of the
decorations match the colors of the adjacent edges. 

Comparing the above with  the material
in~\cite[Subsection~4.3.2]{Fresse} we identify~(\ref{Po navsteve
  ocniho.}) with the constant part of  
the colored version of Fresse's {\em simplicial bar
  construction \/}  $C_*(\fopc,\fopc,\oMc)$, resp.~with  
its normalization~$N_*(\fopc,\fopc,\oMc)$. 
We therefore have

\begin{proposition}
\label{Na kontrolu a rok.}
There are natural isomorphisms of chain complexes
\begin{equation}
\label{Malem jsem se zabil.}
\beta_*(\fop,\oM)(\fM;b) \cong C_*(\fopc,\fopc,\oMc)(\emptyset)\
\hbox { and } \
B_*(\fop,\oM)(\fM;b) \cong N_*(\fopc,\fopc,\oMc)(\emptyset)
\end{equation}
compatible with the augmentations.
\end{proposition}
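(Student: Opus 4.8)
The plan is to build the isomorphisms in~(\ref{Malem jsem se zabil.}) componentwise on the level of the underlying graded vector spaces first, and then check compatibility with the face and degeneracy operators. Recall that the terms of $\beta_*(\fop,\oM)(\fM;b)$ are the direct sums in~(\ref{Jarusku boli brisko.}) over towers $\Twr_\fM$ of the form~(\ref{Predverem jsem se znovu narodil.}), and that the summand attached to such a tower was shown in the previous section to be the product~(\ref{Za 6 dni na ocni.}), which was rewritten, using the convenient definition of $\fopc$ and $\oMc$, as~(\ref{Po navsteve ocniho.}). On the other side, Fresse's simplicial bar construction $C_*(\fopc,\fopc,\oMc)$ in degree $n$, evaluated at the empty list of colors, is by~\cite[Subsection~4.3.2]{Fresse} the coproduct over all ways of composing the free colored operad on $\fopc$ applied $n$ times to $\oMc$, which in the coloured setting is exactly a coproduct over planar level trees with $n+2$ levels with edges labelled by colours, the leaves (twigs) and internal vertices decorated by $\fopc$ and the root by $\oMc$, subject to colour-matching at every edge. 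First I would make the bijection between towers~(\ref{Predverem jsem se znovu narodil.}) and such decorated planar trees explicit: this is precisely the tree ${\rm T}_\fM$ constructed in the paragraph before the "Notation recall," together with its edge-colouring by the restricted fields $c^k$. Under this bijection the summand~(\ref{Po navsteve ocniho.}) is literally, term by term, the corresponding summand of $C_n(\fopc,\fopc,\oMc)(\emptyset)$, so one gets an isomorphism of graded vector spaces for free.

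Next I would verify that this termwise identification is a chain map, i.e.\ intertwines the two differentials. Both differentials are alternating sums $\sum (-1)^i d_i$, so it suffices to match the individual face operators. On the bar-resolution side, $d_i$ for $1\le i\le n-1$ composes the operad elements sitting over the pair of levels $T_{i-1}\leftarrow T_i\leftarrow T_{i+1}$ using the operadic composition governed by diagram~(\ref{Skube mi v prave ruce.}); concretely, on a blob $D\in\fD^{i+1}$ it glues the field on $D\setminus\Int\fD^i_D$ with the fields on the $\fD^{i-1}$-blobs sitting inside, which is exactly the coloured operadic composition in $\fopc$ contracting one internal level of the tree — and that is precisely the $i$-th face of Fresse's simplicial bar construction, which collapses the $i$-th pair of adjacent stages via the operad multiplication. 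The operator $d_0$ acts by $\gamma_{f_1}$, gluing the outermost field on $\fM\setminus\Int\fD^0$ — no, here one must be slightly careful about indexing: $d_0$ on the bar side uses $\gamma_{f_1}$ which, read through~(\ref{Divna doba.}) and the definition of $\fop$ as a restriction of $\fopn$, is the glueing of the field over $\fD^1\setminus\Int\fD^0$, and this corresponds to the face that applies $\oMc$-action, while $d_n$, which uses the module action $\nu_\alpha$, corresponds on the tree side to contracting the root level, i.e.\ the $\oMc$-action face. I would write this out just carefully enough to pin down the correspondence of indices, and then note that the augmentations $\epsilon$ match too, since both are given by the single remaining $\oMc$-action / glueing onto $\fM$, landing in $\C(\fM;b)\cong\oMc(\emptyset)$. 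The degeneracy operators~(\ref{Za hodinu jdu na ocni.}), which insert an identity automorphism of $T_j$ decorated by the operad unit $1_j$, correspond under the tree picture to inserting a level consisting of unary vertices each decorated by the operadic unit $c\in\fopc((D,c);(D,c))$ — exactly Fresse's degeneracies — so the normalized statement $B_*(\fop,\oM)(\fM;b)\cong N_*(\fopc,\fopc,\oMc)(\emptyset)$ follows by passing to the quotient by degenerate elements, which on the left is the restriction described after~(\ref{Za 6 dni na ocni.}) (dropping the summands with $\fD^k_D=(D)$).

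The main obstacle, as I see it, is purely bookkeeping: reconciling the two indexing and orientation conventions. The operadic category $\Blob$ is defined as the \emph{opposite} of the category of blob configurations, towers~(\ref{Poradmnebolipravepredlokti}) and~(\ref{Predverem jsem se znovu narodil.}) point in opposite directions, the fibers $F_k$ of the morphisms $f_k$ correspond to blob \emph{complements}, and Fresse's bar construction $C_*(\fopc,\fopc,\oMc)$ has its own built-in orientation coming from the free-operad monad; getting the signs in $\partial_n=\sum(-1)^id_i$ to agree on the nose (rather than up to an overall reindexing automorphism of the simplicial object) requires care. The Koszul sign $(-1)^{|g||t|}$ appearing in the freeness map in the proof of Proposition~\ref{Z tech brejli se mi toci hlava.} and the graded symmetry used implicitly in writing~(\ref{Za 6 dni na ocni.}) as~(\ref{Po navsteve ocniho.}) must be tracked through as well. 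Everything else — the termwise bijection of summands and the identification of each face and degeneracy with its operadic counterpart — is a direct, if tedious, unwinding of the definitions recalled in Section~\ref{Boli mne prava noha.} against~\cite[Subsection~4.3.2]{Fresse}, and I would present it as such, relegating the sign verification to a remark or to the phrase "a routine check of signs."
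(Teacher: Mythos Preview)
Your approach is correct and is essentially the same as the paper's: the paper also derives the isomorphism by identifying the summand~(\ref{Po navsteve ocniho.}) attached to a tower~$\Twr_\fM$ with the corresponding summand of Fresse's simplicial bar construction via the leveled-tree description~${\rm T}_\fM$, and then simply states the proposition as an immediate consequence, without a separate proof environment. Your write-up is more explicit about matching the individual face and degeneracy operators and about the sign bookkeeping, but the underlying argument is identical.
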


Proposition~\ref{Na kontrolu a rok.} provides a bridge between blob
complexes viewed from the perspective of unary operadic 
categories and blob complexes based
on colored operads. Fact~4.1.7 of~\cite{Fresse} applied to $P = \fopc$ and
$R= \oMc$ may suggest that the complexes  $C_*(\fopc,\fopc,\oMc)$ and
$N_*(\fopc,\fopc,\oMc)$ are acyclic in positive dimensions. This is
however not true, because the crucial assumption of connectivity
required by Fact~4.1.7 is  violated in our situation.

B.~Fresse introduced in~\cite[Section~4]{Fresse}
the {\em differential bar construction} $B_*(L,P,R)$ of an augmented
$P$-operad with coefficients in a right $P$-module $L$ and a left
$P$-module $R$. We will use the obvious colored version of his
construction with $P = \fopc$, $L = \oMc$ and $R = \fopc$. 
Let $\lunit$ be the colored operad whose only nontrivial component is  
\[
\colorop{\lunit}({(D;c)};{(D,c)}) := \Span(\{c\}),
\] 
the $R$-linear
span of the field $\{c\}$. 
The operad of fields $\fopc$ is augmented by the obvious morphism $\varepsilon :
\fopc \to  \lunit$ of colored operads. We denote by $\ofop := {\rm
  Ker}(\varepsilon)$ the augmentation ideal. 

Let $\oB(\fopc)$ be the cofree conilpotent cooperad 
generated by the component-wise suspension of the colored collection
$\ofop$. Mimicking Fresse's definition we consider
\begin{equation}
\label{Kdyz pisu zase mne pali hrbety rukou.}
B_*(\oMc,\fopc,\fopc) := \oMc \circ \oB(\fopc) \circ \fopc, 
\end{equation}
where $\circ$ is the straightforward colored version of the composition
product \cite[\S 1.3.5]{Fresse}. The iterated product~(\ref{Kdyz pisu zase mne
  pali hrbety rukou.}) bears the
differential given by the operad structure of $\fopc$ and the
right $\fopc$-action on $\oMc$.
The differential bar construction $B_*(\oMc,\fopc,\fopc)$ is thus 
a colored collection with components 
\[
B_*(\oMc,\fopc,\fopc)\big(\crada{(D_1,c_1)}{(D_r,c_r)}\big).
\]
We will be particularly interested in the component with $r=0$
(i.e.~`no inputs'), which we denote  by
$B_*(\oMc,\fopc,\fopc)(\emptyset)$.

The elements of $B_*(\oMc,\fopc,\fopc)(\emptyset)$ can be visualized as 
finite linear
combinations of forests growing from $\oMc$,
whose trees have forks (= vertices) decorated by the fields in $\ofop$, branches (=
edges) colored by blobs with fields on the boundaries, 
and twigs (= leaves) by the fields in the  local relations, 
cf. Figure~\ref{tresnicky}. We must however be careful, since
the fields (= decorations of the vertices) are assigned degree $+1$,
cf.~(\ref{Kdyz pisu zase mne pali hrbety rukou.}), so we are in fact
dealing with the {\em equivalence classes} of forests in
Figure~\ref{tresnicky} with vertices linearly ordered compatibly 
with the partial order given by the distance from the soil
(= root). We identify a forest $F'$ with the forest $\epsilon \cdot
F''$, where $\epsilon \in \{+1,-1\}$ is the signum of the permutation
that brings the order of vertices of $F'$ to the order of vertices of
$F''$.   The differential contracts the edges, one at a time, and
decorates the new vertex thus created by the glued field.
Notice that this description is practically identical with the
definition of the blob complex in~\cite{blob}.

\begin{figure}
\[
\psscalebox{.8 .8} 
{
\begin{pspicture}(0,-2.723504)(10.094016,2.723504)
\definecolor{colour0}{rgb}{0.74509805,0.6431373,0.6431373}
\definecolor{colour1}{rgb}{0.9372549,0.14117648,0.14117648}
\psframe[linecolor=black, linewidth=0.04, fillstyle=solid,fillcolor=colour0, dimen=outer](9.858971,-1.9215103)(1.0370396,-2.723504)
\psline[linecolor=black, linewidth=0.04](5.448005,-1.9215103)(5.448005,-0.3175229)(5.448005,-0.3175229)
\psline[linecolor=black, linewidth=0.04](7.8539863,-1.9215103)(7.8539863,0.083473966)(9.858971,1.6874614)(9.858971,1.6874614)
\psline[linecolor=black, linewidth=0.04](7.8539863,0.083473966)(7.051993,2.0884583)(7.051993,2.0884583)
\psline[linecolor=black, linewidth=0.04](7.8539863,0.083473966)(8.65598,2.4894552)(8.65598,2.4894552)(8.65598,2.4894552)
\psline[linecolor=black, linewidth=0.04](9.858971,1.6874614)(9.858971,2.4894552)
\psdots[linecolor=black, dotsize=0.40099686](7.847008,0.07649589)
\psdots[linecolor=black, dotsize=0.40099686](9.847009,1.6764959)
\psline[linecolor=black, linewidth=0.04](1.4380366,1.2864646)(2.6410272,2.0884583)
\psline[linecolor=black, linewidth=0.04](3.0420241,0.083473966)(5.0470085,1.2864646)(4.2450147,2.4894552)(4.2450147,2.4894552)
\psline[linecolor=black, linewidth=0.04](5.0470085,1.2864646)(5.849002,2.0884583)
\psline[linecolor=black, linewidth=0.04](3.0420241,-1.9215103)(3.0420241,0.083473966)(1.4380366,1.2864646)(0.23504595,1.6874614)
\psdots[linecolor=black, dotsize=0.40099686](3.0470083,0.07649589)
\psdots[linecolor=black, dotsize=0.40099686](5.0470085,1.2764959)
\psdots[linecolor=black, dotsize=0.40099686](1.4470084,1.2764959)
\psdots[linecolor=colour1, dotsize=0.5012461](5.447008,-0.3235041)
\psdots[linecolor=colour1, dotsize=0.5012461](4.2470083,2.476496)
\psdots[linecolor=colour1, dotsize=0.5012461](5.847008,2.076496)
\psdots[linecolor=colour1, dotsize=0.5012461](0.24700837,1.6764959)
\psdots[linecolor=colour1, dotsize=0.5012461](2.6470084,2.076496)
\psdots[linecolor=colour1, dotsize=0.5012461](7.0470085,2.076496)
\psdots[linecolor=colour1, dotsize=0.5012461](8.647008,2.476496)
\psdots[linecolor=colour1, dotsize=0.5012461](9.847009,2.476496)
\psdots[linecolor=black, dotstyle=o, dotsize=0.2, fillcolor=white](0.24700837,1.6764959)
\psdots[linecolor=black, dotstyle=o, dotsize=0.2, fillcolor=white](2.6470084,2.076496)
\psdots[linecolor=black, dotstyle=o, dotsize=0.2, fillcolor=white](4.2470083,2.476496)
\psdots[linecolor=black, dotstyle=o, dotsize=0.2, fillcolor=white](5.847008,2.076496)
\psdots[linecolor=black, dotstyle=o, dotsize=0.2, fillcolor=white](7.0470085,2.076496)
\psdots[linecolor=black, dotstyle=o, dotsize=0.2, fillcolor=white](8.647008,2.476496)
\psdots[linecolor=black, dotstyle=o, dotsize=0.2, fillcolor=white](9.847009,2.476496)
\psdots[linecolor=black, dotstyle=o, dotsize=0.2, fillcolor=white](5.447008,-0.3235041)
\end{pspicture}
}
\]
\caption{\label{tresnicky}Viewing elements of $B_*(\oMc,\fopc,\lunit)(\emptyset)$
  as forests. Internal vertices are decorated by fields on punctured
  blobs, 
leaves by
generators of  local relations, the soil by a field on $\fM$.}
\end{figure}

In the `forest reprezentation' of  $B_*(\oMc,\fopc,\fopc)(\emptyset)$, the
homological 
degree is the number of vertices. In particular, 
\[
B_0(\oMc,\fopc,\fopc)(\emptyset) \cong \bigoplus_{(D;c)} \oMc\big((D;c)\big) \ot
\colorop {\fopc}({\emptyset};{(D;c)}) =  \bigoplus_{(D;c)} \C(\fM \setminus \Int
D; c \sqcup b)\ot\loc(D;c)
\]
with the augmentation $\epsilon :B_0(\oMc,\fopc,\fopc)(\emptyset) \to
\C(\fM;b)$ given by gluing the fields in $\C(\fM \setminus \Int
D; c\sqcup b)$ with the fields from  $\loc(D,c)$. 

\begin{example}
Figure~\ref{Terezka a Mourek} symbolizes the types of terms in the initial part 
\[
\C(\fM;b) \stackrel\epsilon\longleftarrow B_0(\oMc,\fopc,\fopc)(\emptyset)\stackrel\pa\longleftarrow
B_1(\oMc,\fopc,\fopc)(\emptyset) \stackrel\pa\longleftarrow \cdots
\]
of the augmented bar construction. This should be compared to the
explicit description of the initial terms of the blob complex given on
pages 1500--1502 of \cite{blob}. 
\begin{figure}
\[
\psscalebox{.8 .8} 
{
\begin{pspicture}(70,-.5)(2,2.3006945)
\definecolor{colour1}{rgb}{0.74509805,0.6431373,0.6431373}
\definecolor{colour2}{rgb}{0.9372549,0.14117648,0.14117648}
\definecolor{colour0}{rgb}{0.8509804,0.75686276,0.75686276}
\psframe[linecolor=black, linewidth=0.04, fillstyle=solid,fillcolor=colour1, dimen=outer](7.554669,0.61479825)(5.3006945,-0.10175844)
\psdots[linecolor=black, dotsize=0.4](18.900694,1.4000001)
\psframe[linecolor=black, linewidth=0.04, fillstyle=solid,fillcolor=colour1, dimen=outer](12.354669,0.61479825)(10.100695,-0.10175844)
\psframe[linecolor=black, linewidth=0.04, fillstyle=solid,fillcolor=colour1, dimen=outer](17.154669,0.61479825)(14.900695,-0.10175844)
\psframe[linecolor=black, linewidth=0.04, fillstyle=solid,fillcolor=colour1, dimen=outer](20.35467,0.61479825)(18.100695,-0.10175844)
\psline[linecolor=black, linewidth=0.04](11.300694,0.6000001)(11.300694,1.4000001)(11.300694,1.4000001)
\psline[linecolor=black, linewidth=0.04](15.700695,0.6000001)(14.900695,1.8000001)
\psline[linecolor=black, linewidth=0.04](16.500694,0.6000001)(16.900694,1.4000001)
\psline[linecolor=black, linewidth=0.04](19.300695,0.6000001)(18.900694,1.4000001)(19.700695,2.2)
\psdots[linecolor=colour2, dotsize=0.5](14.900695,1.8000001)
\psdots[linecolor=colour2, dotsize=0.5](16.900694,1.4000001)
\psdots[linecolor=colour2, dotsize=0.5](19.700695,2.2)
\psdots[linecolor=colour2, dotsize=0.5](11.300694,1.4000001)
\psdots[linecolor=colour0, dotstyle=o, dotsize=0.2, fillcolor=white](11.300694,1.4000001)
\psdots[linecolor=colour0, dotstyle=o, dotsize=0.2, fillcolor=white](0.10069458,-6.2)
\psdots[linecolor=colour0, dotstyle=o, dotsize=0.2, fillcolor=white](14.900695,1.8000001)
\psdots[linecolor=colour0, dotstyle=o, dotsize=0.2, fillcolor=white](16.900694,1.4000001)
\psdots[linecolor=colour0, dotstyle=o, dotsize=0.2, fillcolor=white](19.700695,2.2)
\rput[cc](8.8,0.20000008){\large $\longleftarrow$}
\rput[cc](13.700695,0.20000008){\large $\longleftarrow$}
\rput[cc](17.700695,0.20000008){\large $\oplus$}
\rput[tc](17.695,-0.5999999){
$\underbrace{\rule{14em}{0mm}}_{
\hbox{$B_1(\oMc,\fopc,\lunit)(\emptyset)$}}
$}
\rput[bc](6.5,-0.5999999){$\fopc(\fM)$}
\rput[bc](11.300694,-0.5999999){$B_0(\oMc,\fopc,\lunit)(\emptyset)$}
\end{pspicture}
}
\]
\caption{\label{Terezka a Mourek}%
The initial part of the augmented bar construction.}
\end{figure}
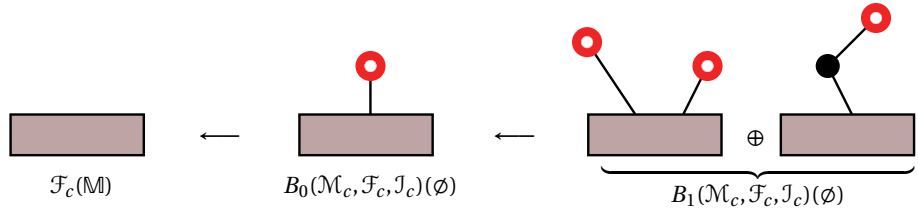
\end{example}

We finally arrive at

\begin{proposition}
\label{Je porad zima.}
The blob complex \/ $\calB(\fM,\C)$ of \/ \cite[Section~3]{blob} is isomorphic
to the piece 
\begin{equation}
\label{Jackill and Hyde}
\C(\fM;b)
\stackrel\epsilon\leftarrow B_*(\oMc,\fopc,\fopc)(\emptyset)
\end{equation} 
of the augmented differential bar construction.
\end{proposition}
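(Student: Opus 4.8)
The plan is to exhibit an explicit identification between the generators of the blob complex $\calB_*(\fM,\C)$ as defined in \cite[Section~3]{blob} and the generators of $B_*(\oMc,\fopc,\fopc)(\emptyset)$, and then check that the differentials and augmentations match. The key conceptual input is the ``forest representation'' of $B_*(\oMc,\fopc,\fopc)(\emptyset)$ described just above the statement: an element of homological degree $n$ is an equivalence class (up to the sign of the reordering permutation) of a forest with $n$ internal vertices, growing from a single $\oMc$-labelled soil, whose vertices are decorated by fields on punctured blobs (elements of $\ofop$), whose edges are coloured by blobs-with-boundary-fields, and whose leaves are decorated by generators of the local relations $\loc(D;c)$. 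I would first recall, in a form convenient for comparison, Morrison--Walker's definition of an $n$-blob diagram: a configuration of $n$ blobs in $\fM$ that is nested-or-disjoint, together with a field on $\fM$ restricting to a local relation on each innermost blob, modulo the appropriate identifications. The essential observation is that a nested-or-disjoint configuration of $n$ blobs is precisely the combinatorial datum of a rooted forest on $n$ vertices (containment of blobs $=$ the ancestor relation), and the decorating field, cut along all the blob boundaries, decomposes exactly into: a field on the complement of the outermost blobs (the $\oMc$-label), a field on each ``punctured blob'' $D\setminus\bigcup\Int D_i$ (the $\ofop$-labels at the internal vertices), and a generator of $\loc$ at each innermost blob (the leaf labels). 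This is the same bookkeeping already carried out in the excerpt for $\beta_*(\fop,\oM)(\fM;b)$ via formula~(\ref{Za 6 dni na ocni.}) and its compact rewriting (\ref{Po navsteve ocniho.}).

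Concretely I would proceed in the following steps. \textbf{Step 1.} Set up the degreewise bijection: fix $n$, and produce a bijection between the set of $n$-blob diagrams (before passing to the free $R$-module and before quotienting) and the set of decorated forests on $n$ vertices underlying $B_n(\oMc,\fopc,\fopc)(\emptyset)$, using the cut-along-boundaries decomposition of the field. \textbf{Step 2.} Check that the identifications/quotients coincide. On the blob side one quotients by relations coming from the symmetric group action on the set of blobs of a given diagram and from linearity; on the bar side one has the sign identification of forests with reordered vertices coming from the degree $+1$ placed on each $\ofop$-label in~(\ref{Kdyz pisu zase mne pali hrbety rukou.}), together with multilinearity in the vertex, leaf and soil labels. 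I would verify these quotients correspond under the bijection of Step~1, so that the bijection descends to an isomorphism of $R$-modules $\calB_n(\fM,\C)\cong B_n(\oMc,\fopc,\fopc)(\emptyset)$ for all $n$. \textbf{Step 3.} Check that the differential matches: the Morrison--Walker boundary is an alternating sum over ways of ``erasing'' one blob from the configuration and gluing its field into the surrounding region; under the correspondence, erasing a blob is exactly contracting the corresponding edge of the forest, and gluing the two adjacent fields along the erased boundary is exactly the operad composition in $\fopc$ (or the right $\fopc$-action on $\oMc$ when the erased blob is outermost), which is the description of the differential on $B_*(\oMc,\fopc,\fopc)$ recalled above; the signs agree because both are the Koszul sign of moving past the erased vertex. \textbf{Step 4.} Check that the augmentations agree: in degree $0$ the isomorphism of Step~1 specializes to $\calB_0(\fM,\C)\cong\bigoplus_{(D;c)}\C(\fM\setminus\Int D;c\sqcup b)\ot\loc(D;c)=B_0(\oMc,\fopc,\fopc)(\emptyset)$, and on both sides the map to $\C(\fM;b)$ is ``glue the ambient field with the local relation'', so (\ref{Jackill and Hyde}) commutes with the augmentations as claimed. (It may also be worth remarking explicitly that Morrison--Walker's $\calB_0$ is just $\C(\fM;b)$ itself in some presentations; if the convention of \cite{blob} differs from the one producing the displayed $B_0$, this is absorbed into the standard fact that the augmented bar construction and its $0$-th term present the same thing, but I would be careful to cite the precise convention of \cite[Section~3]{blob} being used.)

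The main obstacle I expect is bookkeeping rather than conceptual: matching the \emph{signs} and the \emph{identifications} precisely. Morrison--Walker organize an $n$-blob diagram with an ordering of the blobs and quotient by the $\Sigma_n$-action with the sign representation twisted in, and their boundary map is an alternating sum; the bar construction encodes the same sign data through the suspension of $\ofop$ in (\ref{Kdyz pisu zase mne pali hrbety rukou.}) and the resulting ``forests up to vertex-reordering with the permutation sign'' equivalence. Verifying that these two sign conventions are literally the same — not merely isomorphic after some global twist — requires carefully tracking the ordering of vertices versus the ordering of blobs and how edge contraction interacts with it. A secondary subtlety is the treatment of the ``innermost blob with $c=c'$'' degenerate case (where $\colorop{\fopc}((D,c);{(D,c)})=\Span(\{c\})$ and the corresponding $\ofop$-component vanishes): one must confirm this exactly matches the Morrison--Walker convention that an innermost blob carrying a field which is itself a local relation contributes the relevant generator of $\loc$ and no spurious extra datum, which is why the leaves of the forest are decorated by $\loc$ rather than by $\ofop$. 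Once these compatibility checks are done, the isomorphism of complexes in (\ref{Jackill and Hyde}) follows formally, and together with Proposition~\ref{Na kontrolu a rok.} it furnishes the promised bridge between the unary-operadic-category picture and the colored-operad picture.
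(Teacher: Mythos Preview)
Your approach is correct and is exactly what the paper does: its entire proof reads ``Comparing the respective definitions, we easily construct the required isomorphism \ldots\ of chain complexes. Notice the degree shift,'' followed by the diagram
\[
\xymatrix{\C(\fM;b) \ar@{=}[d]&\ar[l]_\epsilon B_0(\oMc,\fopc,\fopc)(\emptyset)\ar@{<->}[d]^\cong
& \ar[l]_\pa\ar@{<->}[d]^\cong
B_1(\oMc,\fopc,\fopc)(\emptyset)& \ar[l]_(.3)\pa \cdots
\\
\calB_0(\fM,\C) &\ar[l]_\pa \calB_1(\fM,\C)
& \ar[l]_\pa
\calB_2(\fM,\C)&
\ar[l]_(.3)\pa \cdots
}
\]
Your forest-versus-nested-blobs dictionary, the gluing-equals-edge-contraction check for the differential, and the sign discussion are precisely the content hidden behind the phrase ``comparing the respective definitions.''

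One correction you should make: the degree shift is not a convention ambiguity to be ``absorbed,'' it is the actual statement. In Morrison--Walker's convention $\calB_0(\fM,\C)=\C(\fM;b)$ (zero blobs) and $\calB_n$ is spanned by $n$-blob diagrams; on the bar side $B_0(\oMc,\fopc,\fopc)(\emptyset)$ already involves blobs. Thus the isomorphism is $\calB_{n+1}\cong B_n$ for $n\geq 0$, with $\calB_0$ matching the augmentation target $\C(\fM;b)$, exactly as in the diagram above and as used later in Theorem~C. Your Step~4 line ``$\calB_0(\fM,\C)\cong B_0(\oMc,\fopc,\fopc)(\emptyset)$'' is therefore off by one; fix that and your proof is complete.
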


\begin{proof}
Comparing the respective definitions, we easily construct the
required isomorphism
\[
\xymatrix{\C(\fM;b) \ar@{=}[d]&\ar[l]_\epsilon B_0(\oMc,\fopc,\fopc)(\emptyset)\ar@{<->}[d]^\cong
& \ar[l]_\pa\ar@{<->}[d]^\cong
B_1(\oMc,\fopc,\fopc)(\emptyset)&  \ar[l]_\pa B_2(\oMc,\fopc,\fopc)(\emptyset)\ar@{<->}[d]^\cong &
\ar[l]_(.3)\pa \cdots
\\
\calB_0(\fM,\C) &\ar[l]_\pa \calB_1(\fM,\C)
& \ar[l]_\pa
\calB_2(\fM,\C)&  \ar[l]_\pa \calB_3(\fM,\C)&
\ar[l]_(.3)\pa \cdots
}
\]
of chain complexes. Notice the degree shift.
\end{proof}

Recall that Fresse introduced, for a `traditional' operad $P$, a right
$P$-module $L$ and a left module $R$, the {\em levelization morphism\/}  
\[
\phi_*(L,P,R) : B_*(L,P,R) \to N_*(L,P,R) 
\]
from the differential bar construction to the normalization of the simplicial
bar construction. 
While the elements of $B_*(L,P,R)$ are represented by decorated trees,  the
elements of $N_*(L,P,R)$ are decorated trees equipped with levels. The
chain map
$\phi_*(L,P,R)$ sends a given decorated tree to the sum, with
appropriate signs, of all decorated
trees with levels whose underlying non-leveled decorated tree equals
the given one.
Fresse then proved in~\cite[Theorem~4.1.8]{Fresse} that $\phi_*(L,P,R)$ is a
\qi. Although he assumed simple connectivity, his theorem holds
without this assumption, which expresses the folklore fact  that 
the space of levels of a given tree is a contractible
groupoid, cf.~also~\cite{HeMo}. Combining this with
isomorphisms~(\ref{Malem jsem se zabil.}) and~(\ref{Jackill and Hyde})
results in
the following comparison between the original blob complex  $\calB_*(\fM,\C)$ 
defined in~\cite[Section~3]{Walker} and the normalized bar resolution
$B_*(\fop,\oM)(\fM;b)$, cf.~Definition~\ref{Za 70 minut.}.

\begin{theoremC}
\label{Veta C}
The levelization morphism of\/~\cite[Theorem~4.1.8]{Fresse} induces a \qi\
\[
\xymatrix@C=4em{\calB_{*+1}(\fM,\C)
  \ar[d]_{\ell_*}^\sim \ar[r] & \ar@{=}[d]
  \C(\fM;b)
\\
B_*(\fop,\oM)(\fM;b) \ar[r] & \C(\fM;b)
}
\]
of augmented chain  complexes. 
\end{theoremC}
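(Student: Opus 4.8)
The plan is to prove Theorem~C by assembling the two preceding propositions with a coloured version of Fresse's levelization theorem, so that no genuinely new computation is needed. Recall that Proposition~\ref{Je porad zima.} provides an isomorphism of augmented chain complexes identifying the blob complex, with a degree shift by one, with the $\emptyset$-component of the differential bar construction, $\calB_{n+1}(\fM,\C)\cong B_n(\oMc,\fopc,\fopc)(\emptyset)$, and Proposition~\ref{Na kontrolu a rok.}, see~\eqref{Malem jsem se zabil.}, provides a degree-preserving isomorphism $B_*(\fop,\oM)(\fM;b)\cong N_*(\fopc,\fopc,\oMc)(\emptyset)$, each compatibly with the augmentations onto $\C(\fM;b)$. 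The morphism $\ell_*$ of the theorem is then defined as the composite
\[
\calB_{*+1}(\fM,\C)\ \cong\ B_*(\oMc,\fopc,\fopc)(\emptyset)\ \longrightarrow\ N_*(\fopc,\fopc,\oMc)(\emptyset)\ \cong\ B_*(\fop,\oM)(\fM;b),
\]
where the middle arrow is the $\emptyset$-component of the coloured analogue of Fresse's levelization morphism $\phi_*(\oMc,\fopc,\fopc)\colon B_*(\oMc,\fopc,\fopc)\to N_*(\fopc,\fopc,\oMc)$. Since the two outer maps are isomorphisms, $\ell_*$ is a \qi\ as soon as this middle arrow is one, and the commuting square of Theorem~C over $\id_{\C(\fM;b)}$ is obtained by pasting the three squares formed by the augmentations, using that all the complexes in sight are naturally augmented over $\C(\fM;b)$ and that $\phi_*$ is augmentation-preserving (in homological degree zero there is nothing to levelize, so $\phi_*$ is the identity there).

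First I would set up the coloured analogues of the constructions of~\cite[Section~4]{Fresse}. The differential bar construction $B_*(\oMc,\fopc,\fopc)=\oMc\circ\oB(\fopc)\circ\fopc$ has already been introduced in~\eqref{Kdyz pisu zase mne pali hrbety rukou.}, and the normalized simplicial bar construction $N_*(\fopc,\fopc,\oMc)$ appears in Proposition~\ref{Na kontrolu a rok.}; what remains is the levelization map. Exactly as in the monochromatic case, it sends a decorated tree $T$ -- a generator of $B_*$, carrying homological degree equal to its number of vertices -- to the signed sum, over all linear extensions of the vertex poset of $T$, of the corresponding fully levelled decorated tree, viewed as a generator of $N_*$ in the same degree. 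Because colours merely restrict which partial operadic compositions are admissible, and because everything entering the definitions -- the cofree conilpotent cooperad $\oB(\fopc)$, the composition product $\circ$, the bar differential, and the simplicial face and degeneracy operators -- is local to individual trees, the construction of $\phi_*$ and the verification that it is a degree-preserving chain map carry over verbatim from~\cite[Section~4]{Fresse}. One then restricts all of this to the no-input component, indexed by $\emptyset$, which is the only one relevant to the fixed manifold $\fM$ with boundary field $b$.

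The one point that is not purely formal, and hence the main obstacle, is that~\cite[Theorem~4.1.8]{Fresse} is stated under a connectivity hypothesis which fails here: the coloured operad $\fopc$ is concentrated in homological degree zero and is very far from connected. However, connectivity is not used in the proof that the levelization morphism is a \qi, and that proof applies unchanged. It proceeds by filtering $B_*$ and $N_*$ by the underlying unlevelled decorated tree; on the associated graded, the mapping cone of $\phi_*$ splits as a direct sum, indexed by unlevelled decorated trees $T$, of suitably suspended reduced chain complexes of the simplicial set (equivalently, the nerve of the groupoid) of level structures on $T$. Since $T$ admits at least one level structure and the space of all of them is contractible, these summands are acyclic; cf.~\cite{HeMo}. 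Because a \qi\ of collections is a \qi\ in each component, its $\emptyset$-component is again a \qi, and this is the middle arrow above.

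Finally I would record the bookkeeping. The isomorphism of Proposition~\ref{Je porad zima.} shifts homological degree by $+1$, while the levelization morphism and the isomorphism~\eqref{Malem jsem se zabil.} preserve degree, so the composite $\ell_*$ is a degree-preserving \qi\ from $\calB_{*+1}(\fM,\C)$ to $B_*(\fop,\oM)(\fM;b)$ lying over $\id_{\C(\fM;b)}$, which is precisely the assertion of Theorem~C. The residual checks -- commutativity of the three augmentation squares and the matching of the signs in the forest/level-structure description of Section~\ref{Jarka mi koupila cokoladu.} with those of~\cite{Fresse} -- are routine and follow the template already used in the proofs of Propositions~\ref{Je porad zima.} and~\ref{Na kontrolu a rok.}.
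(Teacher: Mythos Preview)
Your proposal is correct and follows essentially the same route as the paper: compose the isomorphism of Proposition~\ref{Je porad zima.} with the $\emptyset$-component of Fresse's levelization morphism and the isomorphism~\eqref{Malem jsem se zabil.} of Proposition~\ref{Na kontrolu a rok.}, observing that the connectivity hypothesis in~\cite[Theorem~4.1.8]{Fresse} is not actually needed because the poset of level structures on a fixed tree is contractible (the paper likewise appeals to this folklore fact and to~\cite{HeMo}). Your treatment is, if anything, more explicit about the degree bookkeeping and augmentation compatibility than the paper's terse paragraph preceding the statement.
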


The colored operad $\fopc$ is a right module over itself, so one may
also consider $B_*(\fopc,\fopc,\fopc)$ instead of
$B_*(\oMc,\fopc,\fopc)$. Since the pieces of $\fopc$ possess 
also the output color, the components of
$B_*(\fopc,\fopc,\fopc)$ are
\begin{equation}
\label{Dnes jsem si nechal udelat brejle.}
\colorop{B_*(\fopc,\fopc,\fopc)}(\crada{(D_1;c_1)}{(D_r;c_r)} ;{(D;c)}).
\end{equation}
Although the connectivity assumption of \cite[Lemma~4.1.3]{Fresse} is
not fulfilled, a simple explicit contracting homotopy which was in fact
constructed in the proof of~\cite[Proposition~3.2.1]{Walker} shows that~(\ref{Dnes jsem si nechal udelat
  brejle.}) is acyclic  in positive dimensions for each choice of colors.
If the base manifold $\fM$  is
isomorphic to a ball $D$, we easily verify that
\[
B(\oMc,\fopc,\lunit) \cong \bigoplus_{c \in \C(\pa D)} 
\colorop{B(\fopc,\fopc,\lunit)}({\emptyset} ;{(\fM;b)}).
\]
The acyclicity of~(\ref{Dnes jsem si nechal udelat brejle.})
combined with Proposition \ref{Je porad zima.} gives

\begin{corollary}[Corollary~3.2.2 of \cite{blob}]
If \/ $\fM$ is isomorphic to a $(d\!+\!1)$-dimensional ball, then 
the chain complex
$\calB(\fM,\C)$ is contractible.
\end{corollary}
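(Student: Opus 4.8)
The plan is to deduce the corollary by combining Proposition~\ref{Je porad zima.} with the acyclicity of the two-sided bar construction of $\fopc$. By Proposition~\ref{Je porad zima.}, the blob complex $\calB_*(\fM,\C)$ of~\cite[Section~3]{blob} is isomorphic, up to the degree shift displayed in its proof, to the augmented differential bar construction~\eqref{Jackill and Hyde}, i.e.\ to $\C(\fM;b)\stackrel\epsilon\longleftarrow B_*(\oMc,\fopc,\fopc)(\emptyset)$, formed from the colored operad of fields $\fopc$ of Proposition~\ref{Prohlidky se blizi.} and its module $\oMc$ of fields on $\fM$. So it is enough to exhibit, for $\fM\cong D$, the contracting homotopy of this augmented complex asked for by the notion of contractibility in \cite[Corollary~3.2.2]{blob}.

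The essential ingredient is the acyclicity in positive degrees of the two-sided bar construction $B_*(\fopc,\fopc,\fopc)$: for every choice of colors the component~\eqref{Dnes jsem si nechal udelat brejle.} is acyclic above degree zero. This is the step where the real work lies, and it cannot be quoted from Fresse's general acyclicity lemma \cite[Lemma~4.1.3]{Fresse}, because the connectivity hypothesis that lemma places on the augmentation ideal $\ofop$ is violated: the local relations $\loc(D;c)$ are arity-zero operations of $\fopc$ and so contribute already in the lowest degree --- exactly the reason the blob complex of a general $\fM$ has nontrivial higher homology. Instead I would transcribe the explicit degree-raising operator constructed in the proof of \cite[Proposition~3.2.1]{Walker} --- the ``collapse the ambient ball'' homotopy --- into the language of the composition product in~\eqref{Kdyz pisu zase mne pali hrbety rukou.}, and check directly, component by component, that $\partial h+h\partial=\mathrm{id}$. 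The verification is formal and indifferent to connectivity, since the operator acts on the underlying forest of Figure~\ref{tresnicky} by contracting a single edge at a time rather than inducting along a filtration; the only delicacy is reconciling the Koszul signs of the differential bar construction with those of Walker's homotopy.

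With that in hand I would specialise to $\fM\cong D$ a $(d\!+\!1)$-ball, in which case $\fM$ is itself one of the colors. A direct inspection of the definitions of $\oMc$ and $\fopc$ then yields the identification realising the bar construction of Proposition~\ref{Je porad zima.} as a direct sum, indexed by $c\in\C(\pa D)$, of colored components of $B_*(\fopc,\fopc,\fopc)$ of the type~\eqref{Dnes jsem si nechal udelat brejle.} with output color $(\fM;b)$ and empty input. Their contracting homotopies assemble, compatibly with $\epsilon$, into a contracting homotopy of $\C(\fM;b)\leftarrow B_*(\oMc,\fopc,\fopc)(\emptyset)$, and transporting it back through the isomorphism and degree shift of Proposition~\ref{Je porad zima.} gives the asserted contraction of $\calB_*(\fM,\C)$. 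I expect the genuine obstacle to be neither the reduction nor the formal homotopy identity but this last reconciliation --- determining precisely which colored components occur, and with which multiplicities, once the ambient ball is promoted to an output color, and confirming that the homotopies, the augmentation, and the degree shift all line up across the reindexings.
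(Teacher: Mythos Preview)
Your proposal is correct and takes essentially the same route as the paper: invoke the explicit contracting homotopy from the proof of \cite[Proposition~3.2.1]{Walker} for the components~\eqref{Dnes jsem si nechal udelat brejle.} (since Fresse's connectivity hypothesis fails), identify $B_*(\oMc,\fopc,\fopc)(\emptyset)$ with a direct sum of such components when $\fM\cong D$, and transport back through Proposition~\ref{Je porad zima.}. The color-matching step that worries you at the end is dismissed by the paper as something one ``easily verifies''.
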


\section*{Summary}

Chain complexes featured in Part~2 together with
the connecting maps are
summarized in Figure~\ref{Pozitri prof. Vondra.}.
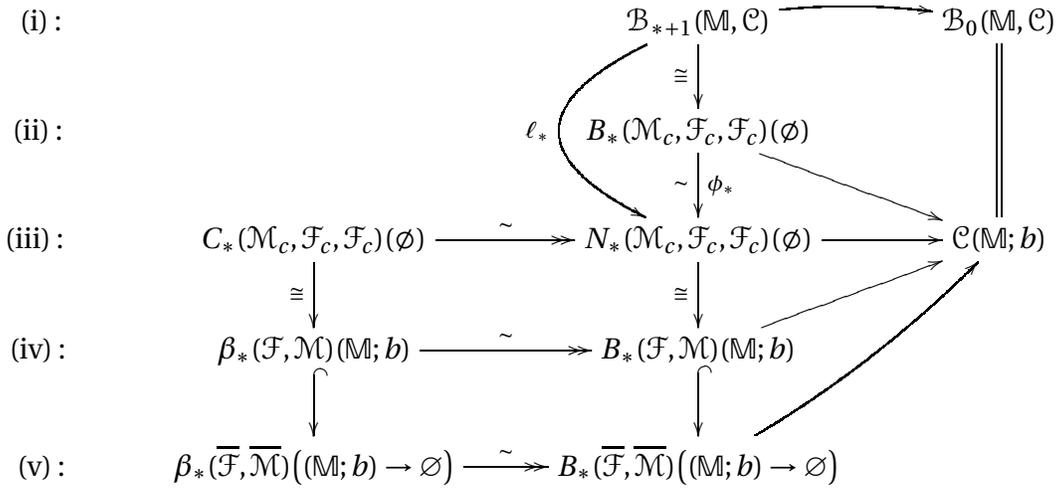
\begin{figure}
\[
\xymatrix@C=3em{
{\rm \ (i):} &&  \calB_{*+1}(\fM,\C) \ar@/_5em/[dd]_{\ell_*}
  \ar[d]_\cong  \ar@/^/[r]     &  \calB_0(\fM,\C) \ar@{=}[dd]
\\
{\rm (ii):}&&  B_*(\oMc,\fopc,\fopc)(\emptyset)
  \ar[d]^{\phi_*}_\sim    \ar[rd]  & 
\\
{\rm (iii): \ }& 
C_*(\oMc,\fopc,\fopc)(\emptyset) \ar@{->>}[r]^\sim  \ar[d]_\cong  
&
N_*(\oMc,\fopc,\fopc)(\emptyset) \ar[r]  \ar[d]_\cong    & \C(\fM;b)
\\
{\rm (iv):}&\beta_*(\fop,\oM)(\fM;b) \ar@{->>}[r]^\sim  \ar@{^{(}->}[d]  
&
B_*(\fop,\oM)(\fM;b)  \ar@{^{(}->}[d]   \ar[ru]   &
\\
{\rm \ (v):}&\beta_*(\fopn,\oMn)\big((\fM;b) \to \varnothing\big)
\ar@{->>}[r]^\sim
&
B_*(\fopn,\oMn)\big((\fM;b) \to \varnothing\big)    \ar@/_/[ruu]  &
}
\]
\caption{\label{Pozitri prof. Vondra.}Sundry chain complexes and their
maps.}
\end{figure}
In that figure: \hfill\break 
\hglue 1em 
-- $\calB_{*+1}(\fM,\C)$ in row~(i) is the original blob complex
of~\cite{Walker},
\hfill\break 
\hglue 1em -- $B_*(\oMc,\fopc,\fopc)(\emptyset)$ in row~(ii) is 
the piece of Fresse's bar construction,
\hfill\break 
\hglue 1em -- $C_*(\oMc,\fopc,\fopc)(\emptyset)$ in row~(iii) is the
piece of Fresse's simplicial bar construction,\hfill\break 
\hglue 1em -- $N_*(\oMc,\fopc,\fopc)(\emptyset)$ in row~(iii)  
is the normalization of $C_*(\oMc,\fopc,\fopc)(\emptyset)$ ,
\hfill\break 
\hglue 1em -- $\beta_*(\fop,\oM)(\fM;b)$ in row~(iv) is the
piece of the un-normalized bar resolution in Definition~\ref{Prestane mi to
  paleni?},
\hfill\break 
\hglue 1em -- 
$B_*(\fop,\oM)(\fM;b)$  in row~(iv) is the normalization of
$\beta_*(\fop,\oM)(\fM;b)$, and
\hfill\break 
\hglue 1em -- the items in row~(v) are as in row~(iv) but this time
applied on $\fopn$ and $\oMn$. \hfill\break
The vertical map $\phi_*$ is Fresse's levelization morphism, $\ell_*$
is the
map in Theorem~C. The remaining maps are either natural isomorphisms,
or augmentations, 
or inclusions, or projections. The vertical isomorphism between row
(i) and (ii) comes from Proposition~\ref{Je porad zima.}, 
the two vertical isomorphism between rows (iii) and (iv)
are that of Proposition~\ref{Na kontrolu a rok.}.

\frenchspacing

\end{document}